\documentclass[12pt]{amsart}
\usepackage[T1]{fontenc}

\usepackage{amsmath,amsfonts,amssymb,amsthm}
\usepackage{enumerate,bbm,physics,mathtools,esint}
\usepackage{tikz,graphicx,color}
\usepackage{caption,float,xcolor}
\usepackage{fullpage}
\usepackage{hyperref}
\usepackage{comment}

\newtheorem{theorem}{Theorem}[section]
\newtheorem{lemma}[theorem]{Lemma}

\theoremstyle{definition}
\newtheorem{definition}[theorem]{Definition}
\newtheorem{example}[theorem]{Example}

\theoremstyle{remark}
\newtheorem{remark}[theorem]{Remark}

\numberwithin{equation}{section}

\newcommand{\R}{{\mathbb R}}

\newcommand{\Z}{{\mathbb Z}}
\newcommand{\N}{{\mathbb N}}
\newcommand{\E}{{\mathbb E}}
\newcommand{\Lam}{\Lambda}
\newcommand{\nuMangoldt}{\nu_{\Lambda}}

\newcommand{\eps}{{\epsilon}}

\newcommand{\Erdos}{{Erd\H{o}s}}
\newcommand{\Sarkozy}{S\'{a}rk\"{o}zy} 
\newcommand{\Szemeredi}{Szemer\'edi}

\begin{document}

\title[Primitive sets and von Mangoldt chains]{Primitive sets and von Mangoldt chains:\\ Erd\H{o}s Problem \#1196 and beyond}

\author{Boris Alexeev}
\address{OpenAI, San Francisco, CA 94158}
\email{boris.alexeev@gmail.com}

\author{Kevin Barreto}
\address{Queens' College, University of Cambridge, Cambridge, CB3 9ET, United Kingdom}
\email{kb799@cam.ac.uk}

\author{Yanyang Li}
\address{School of Mathematics, Southeast University, Nanjing 211189, P.~R.~China}
\email{liyanyang1219@gmail.com}

\author{Jared Duker Lichtman}
\address{
Department of Mathematics \\
450 Jane Stanford Way, Stanford, CA 94305-2125, USA
}
\email{jared.d.lichtman@gmail.com}

\author{Liam Price}
\email{liam.price2002@gmail.com}

\author{Jibran Iqbal Shah}
\address{
Department of Mathematics \\
University of Toronto \\
Toronto, ON \\
Canada
}
\email{jibraniqbal.shah@mail.utoronto.ca}

\author{Quanyu Tang}
\address{School of Mathematics and Statistics, Xi'an Jiaotong University, Xi'an 710049, P.~R.~China}
\email{tangquanyu827@gmail.com}

\author{Terence Tao}
\address{Department of Mathematics \\
University of California, Los Angeles \\
Los Angeles, CA 90095 \\
USA}
\email{tao@math.ucla.edu}

\begin{abstract}
A set of integers is primitive if no number in the set divides another. We introduce a new method for bounding \Erdos{} sums of primitive sets, suggested from output of GPT-5.4 Pro, based on Markov chains with von Mangoldt weights. The method leads to a host of applications, yet seems to have been overlooked by the prior literature since \Erdos{}' seminal 1935 paper.

As applications, we prove two 1966 conjectures of \Erdos{}--\Sarkozy{}--\Szemeredi{}, on primitive sets of large numbers (\#1196) and on divisibility chains (\#1217). The method also provides a short proof of the \Erdos{} Primitive Set Conjecture (\#164), as well as the related claim that $2$ is an ``\Erdos{}-strong'' prime. Moreover, the method resolves (a revised form of) the Banks--Martin conjecture, which has long been viewed as a unifying `master theorem' for the area.
\end{abstract}
\maketitle

\section{Introduction}

The \emph{divisibility poset} $(\N,|)$ is the set of natural numbers $\N = \{1,2,\dots\}$ equipped with the divisibility relation $a|b$ as the partial ordering.  One can organize this poset into ``layers''
\begin{align*}
\N_k&\coloneq \{n:\Omega(n)=k\} \\
\N_{\geq k}&\coloneq \{n:\Omega(n) \geq k\} \\
\N_{>k}&\coloneq \{n:\Omega(n) >k\} 
\end{align*}
for $k \geq 0$, where $\Omega(n)$ denotes the number of prime factors of $n$, counting multiplicity; see Figure \ref{fig-divis}.  For instance,
$$ \N_1 = \{2,3,5,7,\dots\}$$
is the set of primes,
$$ \N_{>1} = \{4,6,8,9,10,\dots\}$$
is the set of composite numbers, and
$$ \N_{\geq 1} = \{2,3,4,5,\dots\}$$
is the set of natural numbers greater than or equal to $2$.  Recall that a set $A \subset \N$ is called \emph{primitive} if it is an antichain in the divisibility poset, or equivalently, if no two distinct elements of $A$ divide one another.  Thus, for instance, the primes $\N_1$ are primitive, as well as $\N_k$ for every $k \geq 0$.

The class of primitive sets is quite general with wide ranging examples, from $\Z\cap (x,2x]$ integers in a dyadic interval to the set of perfect numbers. Recall since Ancient Greece, a number $n$ is as `perfect,' `abundant,' or `deficient,' depending on if the sum of its proper divisors equals $n$, is greater than $n$, or is less than $n$, respectively.

The study of primitive sets emerged in the 1930s as a generalization of studying perfect and abundant numbers. A classical theorem of Davenport asserts that the set of abundant numbers has a positive asymptotic density. This was originally proved by technical analytic methods, but Erd\H{o}s soon found an elementary proof by using primitive abundant numbers.\footnote{More precisely, `primitive non-deficient numbers'} The proof methods led people to introduce the abstract definition of primitive sets and study them for their own sake. See Hall \cite{Hsetmult} or Halberstam--Roth \cite[\S 5]{HalbRoth} for detailed introductions to the subject.

\begin{figure}
  \centering
  \includegraphics[width=0.75\textwidth]{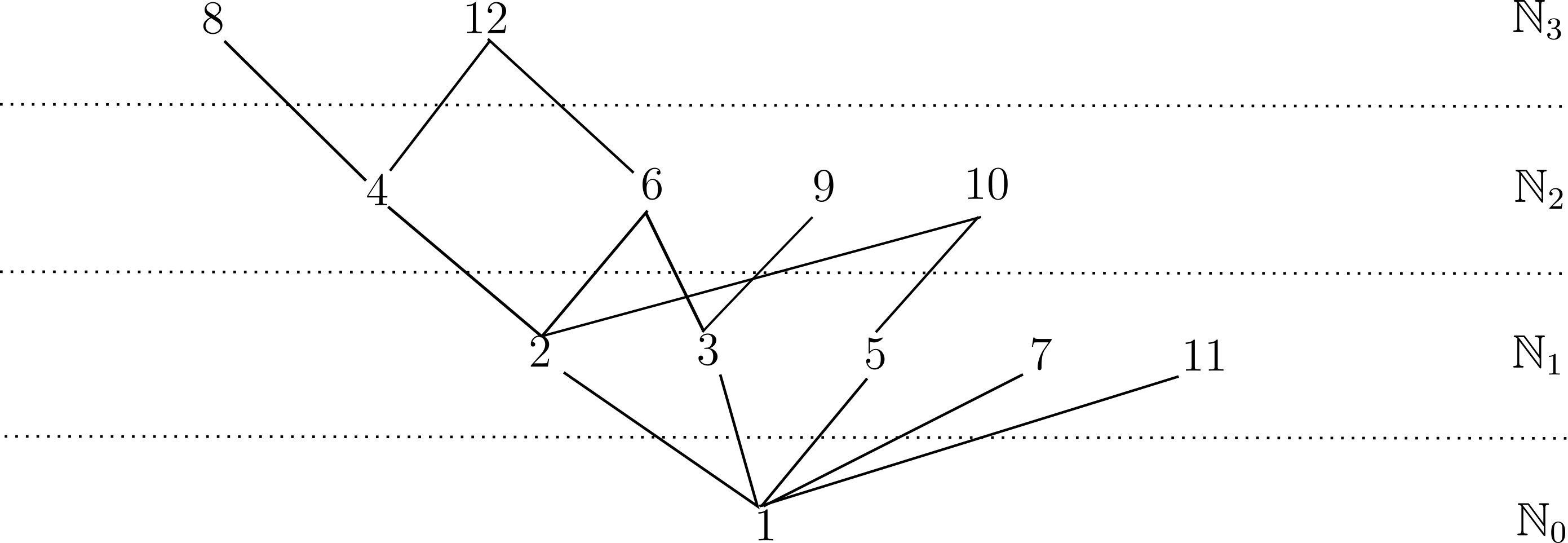}
  \caption{A portion of the divisibility poset on $\N$.}
  \label{fig-divis}
\end{figure}

If we exclude the degenerate case $A = \{1\}$, it is natural to measure the size of such sets by the doubly logarithmic size
$$f(A)\coloneq \sum_{a\in A} \nu_0(a)$$
where $\nu_0 \colon \N_{\geq 1} \to [0,+\infty)$ is the doubly harmonic weight
\begin{equation}\label{nu-def} \nu_0(a) \coloneq \dv{a}(\log\log a) = \frac{1}{a\log a}.
\end{equation}
In Remark \ref{invariants} below, we introduce an asymptotically equivalent weight $\nuMangoldt$ to $\nu_0$ which is more naturally adapted to the study of primitive sets; however, our focus here shall mostly be on the original doubly harmonic weight $\nu_0$, as this is the choice of weight that traditionally has been studied in the literature.

In this article, we introduce a Markov chain-based method on the divisibility poset $(\N,|)$ to study the size of primitive sets, reproving several previous results in the literature and resolving some open conjectures.  Several of these results were established with various levels of AI assistance; see Section \ref{acknowledgments-sec}.

We now state our results and provide some (incomplete) references to the literature; we refer the reader to these papers for further discussion and a history of prior partial results.

Our first theorem, proven in Section \ref{1196-sec}, is as follows.

\begin{theorem}[\Erdos{}--S\'ark\"ozy--Szemer\'edi, \#1196] \label{conj:1196}
If $A$ is a primitive set contained in $[x,\infty)$ for some $x \geq 2$, then
$$f(A) \leq 1 + O\left(\frac{1}{\log x}\right).$$
\end{theorem}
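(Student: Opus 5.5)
The plan is to realize the doubly harmonic weight $\nu_0$ as the stationary flow of an upward Markov chain on $(\N,|)$ whose steps multiply by prime powers with von Mangoldt weights. Then use the fact that a chain moving strictly upward in the divisibility order meets an antichain at most once.

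\textbf{The chain.} From $m \geq 2$ we move to $md$, where $d>1$ is a prime power, with probability
$$p(m\to md) \coloneq \frac{\Lambda(d)\log m}{d\log^2(md)}.$$
The choice is dictated by the identity $\sum_{d|n}\Lambda(d)=\log n$. Setting $\nu_0(m)=1/(m\log m)$ gives
$$\sum_{d|n,\,d>1}\nu_0(n/d)\,p(n/d\to n)=\sum_{d|n}\frac{\Lambda(d)}{n\log^2 n}=\frac{1}{n\log n}=\nu_0(n).$$
The term $d=n$ would require $m=1$. I will handle it by injecting mass at prime powers, or by starting the chain from the crossing distribution described below. So $\nu_0$ is exactly the incoming flow at every $n$.

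The total outgoing probability is $\log m\sum_d \Lambda(d)/(d\log^2(md))$. Partial summation with Mertens' estimate $\sum_{d\le y}\Lambda(d)/d=\log y+O(1)$ compares this with $\log m\int_0^\infty dt/(\log m+t)^2 = 1$. Hence it equals $1+O(1/\log m)$.

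\textbf{Correcting the normalization.} If the outgoing sum exceeds $1$, I renormalize by dividing by it, which makes the flow identity an inequality. I would first try to show, using a Chebyshev-type bound on $\psi$, that the sum is in fact at most $1$. Failing that, I would tweak the weights slightly, for instance replacing $\log^2(md)$ by $\log(md)\log(md\,c)$, so that the flow becomes sub-stochastic. The resulting multiplicative loss is only $\prod(1+O(1/\log m))$ along the chain. Since all states involved are $\ge x$ and $\log$ grows at least additively along the chain, this accumulates to $1+O(1/\log x)$.

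\textbf{Crossing into $[x,\infty)$.} Consider the flow entering $[x,\infty)$, namely transitions $m\to md$ with $m<x\le md$. Every $a\in A\subset[x,\infty)$ receives its flow $\nu_0(a)$ from paths that cross $x$ exactly once. Because the chain is strictly increasing in divisibility and $A$ is an antichain, each path visits at most one element of $A$. Hence
$$f(A)=\sum_{a\in A}\nu_0(a)\le (1+O(1/\log x))\cdot C(x),$$
where $C(x)$ is the total crossing mass, possibly plus the mass injected at prime powers $\ge x$, whose $\nu_0$-contribution is $O(1/\log x)$. Evaluating,
$$C(x)=\sum_{m<x}\frac{1}{m\log m}\sum_{d>x/m}\frac{\Lambda(d)\log m}{d\log^2(md)}\approx\sum_{m<x}\frac{1}{m}\cdot\frac{1}{\log x}=1+O\!\left(\frac1{\log x}\right),$$
since the inner sum is about $\int_{\log x}^\infty du/u^2 = 1/\log x$.

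\textbf{Main obstacle.} The delicate part is making the error terms genuinely $O(1/\log x)$ rather than $O(\log\log x/\log x)$. Three places need care. First, the starting layer must be handled cleanly: the mass at $m=1$ and at small $m$, where $\log m$ is tiny, has to be placed correctly. Second, the normalization must be exactly sub-stochastic or controlled with summable error. Third, the crossing sum needs a sharp estimate. For the crossing sum I would use the prime number theorem with error term, $\psi(y)=y+O(y/\log^2 y)$, in the partial summation. The $O(1)$ discrepancies per $m$ then contribute $\sum_{m<x} O(1/(m\log^2 x))=O(1/\log x)$.
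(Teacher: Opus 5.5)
Your upward chain $m\to md$ with probability $\frac{\Lambda(d)\log m}{d\log^2(md)}$ is exactly the adjoint of the von Mangoldt downward chain with respect to $\nu_0$. Starting it from the crossing distribution on $[x,\infty)$ is the upward form of the paper's argument. The total crossing mass is the right-hand side of \eqref{fA-bound}, and Mertens' theorem (via Lemma~\ref{lem:mangoldt-tail-and-B}(i)) already bounds it by $1+O(1/\log x)$, so no prime number theorem is needed there.

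The gap is the step everything rests on: the chain must be sub-stochastic,
\[
S(m):=\log m\sum_{d}\frac{\Lambda(d)}{d\log^2(md)}\le 1 .
\]
Otherwise mass is created and the antichain bound $\sum_{a\in A}h(a)\le\sum b$ fails. Knowing $S(m)=1+O(1/\log m)$ does not decide this. In fact $S(m)=1-\frac{\gamma}{\log m}+O\bigl(\frac{1}{\log^2 m}\bigr)$, so the favorable sign sits in the secondary term. A Chebyshev bound $\psi(y)\le cy$ cannot detect it; it only gives $S(m)\le c+O(1/\log m)$. The missing ingredient is Lemma~\ref{lem:mangoldt-tail-and-B}(ii). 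Writing $\frac{1}{\log^2(md)}=\int_0^\infty u\,(md)^{-u}\,du$ gives $S(m)=\log m\int_0^\infty u\,m^{-u}\bigl(-\frac{\zeta'}{\zeta}(1+u)\bigr)\,du$. The bound $-\frac{\zeta'}{\zeta}(1+u)\le\frac1u$ of Lemma~\ref{lem:phi} (from monotonicity of the eta function) then yields $S(m)\le\log m\int_0^\infty m^{-u}\,du=1$ for all $m\ge 2$.

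Your fallback does not work as justified. If $\log m_k$ only grows additively, $\log m_k\ge\log x+k\log 2$, then the accumulated factor $\prod_k\bigl(1+O(1/\log m_k)\bigr)$ is governed by $\sum_k\frac{1}{\log x+k\log 2}$, which diverges. So you get no uniform $1+O(1/\log x)$; for $a$ with many small prime factors, the paths from $x$ up to $a$ really are long with roughly additive growth of $\log$.

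The tweak $\log^2(md)\to\log(md)\log(c\,md)$ does not help by itself either. With weight $\nu_0$ the inflow at $n$ becomes $\frac{1}{n\log(cn)}$ rather than $\nu_0(n)$. Switching to the weight $\frac{1}{n\log(cn)}$ again requires a sharp inequality of the type of Lemma~\ref{lem:mangoldt-tail-and-B}(iii).

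If you want to avoid exact sub-stochasticity, account for the discrepancy additively over nodes rather than multiplicatively along paths. The excess created at $n$ is $\nu_0(n)\,|S(n)-1|=O\bigl(\frac{1}{n\log^2 n}\bigr)$, and $\sum_{n\ge x}\frac{1}{n\log^2 n}=O(1/\log x)$. This is precisely the flow-network proof in Section~\ref{subsection:flow}, which needs only the two-sided asymptotic of Lemma~\ref{lem:mangoldt-tail-and-B}(i).
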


\Erdos{}, S\'ark\"ozy, and Szemer\'edi \cite[(20)]{ess2}, \cite[p.~101]{erdos-survey}, \cite[p. 244, Problem 2.2]{sarkozy1}, \cite[p. 224, Problem 2]{sarkozy2},
\cite[Problem \#1196]{bloom} had conjectured that $f(A) \leq 1+o(1)$ as $x \to \infty$; Theorem~\ref{conj:1196} proves this conjecture with a more quantitative error term.  The previous best-known upper bound was
$$ f(A) \leq e^\gamma \frac{\pi}{4} + o(1) \approx 1.399,$$
due to the fourth author \cite{Lichtman}, building upon previous work in \cite{ezhang}, \cite{zhang1}, \cite{zhang2}.  In the opposite direction, it is known \cite{glw} that
\begin{equation}\label{nk}
f(\N_k) = 1 - (c+o(1)) k^2 2^{-k}
\end{equation}
for an explicit constant $c \approx 0.0656$, so the bound in Theorem \ref{conj:1196} is sharp up to the lower-order term.  A version of our proof of Theorem \ref{conj:1196} has been formalized in Lean by Math Inc.~\cite{lean-erdos1196}.  Several additional proofs of this result are now known; see Remark \ref{1196-alt-rem}, Remark \ref{remark:qualitative-1196}, and Section \ref{discussion}.  

Next, in Section \ref{EPS-sec} we provide an alternative proof of the \Erdos{} primitive set conjecture.

\begin{theorem}[\Erdos{} primitive set conjecture, \#164] \label{conj:EPS}
For any primitive $A$, $f(A)\le f(\N_1) = 1.6366\dots$\footnote{The decimal expansion of $f(\N_1)$ is \href{https://oeis.org/A137245}{OEIS A137245}, with the initial digits worked out in \cite[pp.~208--209]{cohen}.}.
\end{theorem}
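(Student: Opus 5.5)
The plan is to reduce Theorem~\ref{conj:EPS} to a local statement for each prime and then prove that statement with a downward Markov chain. For a prime $q$, write $A_q = \{a \in A : P^-(a)=q\}$, where $P^-$ is the least prime factor. Each $A_q$ is primitive, and $f(A)=\sum_q f(A_q)$. It therefore suffices to show the ``\Erdos{}-strong'' bound
$$ f(A_q) \le \nu_0(q) = \frac{1}{q\log q} \qquad \text{for every prime } q. $$
This is trivial if $q\in A_q$. Otherwise every $a \in A_q$ is composite with $a=qb$ and $P^-(b)\ge q$. Summing over $q$ then gives $f(A)\le f(\N_1)$.

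To prove the local bound I would use the von Mangoldt identity $\sum_{d|n}\Lambda(d)=\log n$. This defines a Markov chain on the divisibility poset that moves from $n$ to $n/d$ with probability $\Lambda(d)/\log n$, where $d$ ranges over the prime-power divisors of $n$. Equivalently, one can strip a prime $p|n$ with probability $v_p(n)\log p/\log n$. Every trajectory is a divisor chain from $n$ down to $1$, so it meets the antichain $A_q$ at most once.

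The core step is a choice of starting measure $\mu$ on (a truncation of) $q\N$ with all prime factors $\ge q$, together with the hitting probabilities $h(a)=\mathbb P_\mu(\text{chain visits } a)$. The measure should be chosen so that $h(a)$ is an explicit weight $\nuMangoldt(a)$ times a normalising constant; I expect the right choice to be a harmonic-type measure with density like $1/(n\log^2 n)$ restricted to the appropriate multiples, normalised by a partial Euler product. Primitivity then gives
$$ \sum_{a\in A_q} \nuMangoldt(a) \le \sum_{a\in A_q} \frac{h(a)}{\text{const}} \le \nuMangoldt(q) \cdot (\text{mass reaching level } q). $$
Comparing with the chain stopped at the layer $\{q\}$ yields $f_{\Lambda}(A_q)\le \nuMangoldt(q)$ up to a computable factor. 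I would organise this via the first visit to $A_q$ and a last-exit decomposition at $q$.

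The main obstacle is passing from the natural weight $\nuMangoldt$ back to $\nu_0(a)=1/(a\log a)$. The two weights are only asymptotically equivalent, so the comparison has to be sharp enough to survive at small primes, above all at $q=2$, where the margin is thinnest. My first attempt would be to prove a pointwise inequality $\nu_0(a)/\nu_0(q)\le \nuMangoldt(a)/\nuMangoldt(q)$ for composite $a\in q\N$ with $P^-(a)\ge q$, using Mertens-type estimates with explicit constants (Rosser--Schoenfeld) for large $q$. For $q=2$ and other small $q$ I would handle the case separately with a finite numerical check on the first few layers $\N_k$. If the pointwise comparison fails, I would instead reweight the starting measure $\mu$ by a factor depending on $\log n$, chosen so that the hitting probabilities become exactly proportional to $\nu_0$.
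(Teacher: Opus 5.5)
Your reduction to \Erdos{}-strong primes (the Lichtman--Pomerance observation) is sound. However, the step meant to prove $f(A_q)\le\nu_0(q)$ has a genuine gap, and the route you sketch cannot close as stated. It needs the intermediate bound $\sum_{a\in A_q}\nuMangoldt(a)\le\nuMangoldt(q)$, which you then convert back via $\nu_0(a)/\nu_0(q)\le\nuMangoldt(a)/\nuMangoldt(q)$.

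That comparison is actually the harmless part. The ratio $\nuMangoldt(n)/\nu_0(n)=\int_0^\infty v e^{-v}/w(v/\log n)\,dv$, with $w(u)=u\zeta(1+u)$ increasing, is increasing in $n$.

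The intermediate bound, however, is false already for $q=2$ and $A_2=2\cdot\N_1$. Invariance of $\nuMangoldt$ at $n=2$ reads $\nuMangoldt(2)=\sum_{d}\nuMangoldt(2d)\Lam(d)/\log(2d)$. So each $2p$ passes only the fraction $\log p/\log(2p)$ of its mass down to $2$, and the prime-power terms $d=p^k$, $k\ge 2$, do not compensate. My rough numerical check gives $\nuMangoldt(2)\approx 0.29$ while $\sum_p\nuMangoldt(2p)>0.34$.

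This is the concrete form of the leak in your set-up. The von Mangoldt chain does not stay inside $q\N$: it sends $qp$ to $p$ with probability $\log q/\log(qp)$. So trajectories started on $A_q$ need not pass through $q$, and no choice of starting measure $\mu$ or last-exit decomposition at $q$ recovers that mass. Your ``computable factor'' is therefore necessarily larger than $1$.

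What is missing is a chain that is forced through $q$, together with a sub-invariance inequality for a weight adapted to $q$. One way is to dilate, as in Section~\ref{2-strong-sec}:
\begin{itemize}
\item write $A_q=q\cdot A'$ with $A'$ primitive and $q$-rough;
\item run the von Mangoldt chain on $q$-rough integers, so the factor $q$ is never removed;
\item prove that $\nu_q(n)=1/(n\log(qn))$ is sub-invariant, i.e.\ $\sum_{d}\frac{\Lam(d)}{d\log(md)\log(qmd)}\le\frac{1}{\log(qm)}$ over $q$-rough $d>1$.
\end{itemize}
For $q=2$ this is Lemma~\ref{lem:mangoldt-tail-and-B}(iii) and yields Theorem~\ref{2-strong}. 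For odd $q$ it is \eqref{p-rough-shifted}, which the paper only states; \Erdos{}-strongness of odd primes was established in \cite{Lichtman}. So your reduction makes the problem harder than necessary.

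The paper's proof avoids the reduction altogether:
\begin{itemize}
\item It uses a single von Mangoldt chain on $\N_{\ge 1}$ absorbed at the primes. The chain is modified only at prime powers ($p^k\searrow p$ with probability $2/k$), so no trajectory jumps over $\N_1$.
\item It shows $\nu_0$ itself is sub-invariant, using Lemma~\ref{lem:mangoldt-tail-and-B}(ii) plus $\sum_{k\ge 2}\frac{1}{k^2p^{k-1}}\le\frac{1}{2x+1}$ with $x=\log p/\log 2$.
\item It starts the adjoint upward chain with mass $\nu_0(p)$ at each prime. The hitting masses are then exactly $\nu_0(n)$, giving $f(A)\le\sum_p\nu_0(p)$.
\end{itemize}
No use of $\nuMangoldt$, no comparison, and no numerics are needed.

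Separately, removing one copy of $p$ with probability $v_p(n)\log p/\log n$ is not equivalent to the von Mangoldt chain. Moreover, $\nu_0$ is not sub-invariant for that chain: at $m=2^k$ the sub-invariance sum tends to $3/2$.
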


This result was conjectured in \cite{erdos76}, \cite[Conjecture 2.1]{erdos86}, \cite[1.12]{various}, \cite[Problem \#164]{bloom}.  A uniform bound $f(A) \ll 1$ was first established by \Erdos{} \cite{erdos35}.  This conjecture was first solved by the fourth author \cite{Lichtman}; we provide a shorter proof here.  A variant of this proof was formalized in Lean by the first author~\cite{lean-erdos164}.  We refer the reader to \cite{Lichtman} for a more detailed history of this conjecture.

As we shall show in Section \ref{obm-sec}, the method of proof also establishes a related conjecture of Banks and Martin:

\begin{theorem}[Odd Banks--Martin] \label{conj:oddBM}
Let $k\ge 1$ and suppose $A$ is a primitive set in $\N_{\geq k}$. Then for any set of odd primes $\mathcal Q$,
\begin{align*}
f(A(\mathcal Q)) \ \le\ f\big(\mathbb N_k(\mathcal Q)\big),
\end{align*}
where $A(\mathcal Q)$ denotes the set of members of $A$ composed of primes in $\mathcal Q$.
\end{theorem}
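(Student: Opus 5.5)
Since the abstract announces a Markov chain with von Mangoldt weights, I will build the argument on such a chain. For $n \ge 2$ I run the following downward step. Starting at $n$, choose a prime-power divisor $d \mid n$ with probability $\Lambda(d)/\log n$, and move to $n/p$, where $p$ is the prime underlying $d$. This is a probability distribution because $\sum_{d|n}\Lambda(d)=\log n$. The chain lowers $\Omega$ by one at each step.

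The plan is to run the chain \emph{upward}, from $\N_k$ toward large $\Omega$, and to show that the weight $\nu_0$ is superharmonic for the reversed process. Concretely, I want a kernel $P(m\to n)$ for $m\mid n$, $n = mp$, with
\[ \sum_{p} P(m \to mp) \le 1 \quad\text{and}\quad \nu_0(n) \le \sum_{m \mid n,\ n/m\ \text{prime}} \nu_0(m)\, P(m\to n). \]
With such a kernel, starting from the distribution $\nu_0$ on $\N_k(\mathcal Q)$, every upward path meets the antichain $A(\mathcal Q)$ at most once, so $f(A(\mathcal Q))$ is bounded by the total mass pushed from $\N_k(\mathcal Q)$, i.e.\ by $f(\N_k(\mathcal Q))$. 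Two points make this work. Elements of $A$ with $\Omega > k$ are reached from their divisors in $\N_k(\mathcal Q)$, and they are composed of $\mathcal Q$-primes whenever their divisors are. Primitivity is exactly what ensures each unit of flow is used at most once.

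The natural kernel is $P(m\to mp) = \nu_0(mp)\,\Lambda$-weighted, normalised via the downward step. The required inequality is then
\[ \sum_{p\in\mathcal Q} \frac{\log(mp)}{\log p}\cdot\frac{1}{\Omega\text{-type factor}}\cdot \frac{1}{mp\log(mp)} \cdot \frac{\log p}{\log(mp)}\ \le\ \frac{1}{m\log m}, \]
i.e.\ roughly $\sum_{p} \frac{\log p}{p \log^2(mp)}\cdot\log m \le 1$. Using Mertens' theorem, $\sum_p \frac{\log p}{p\log^2(mp)} \approx \int_1^\infty \frac{dt}{t(\log m+\log t)^2} = \frac{1}{\log m}$. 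So the inequality holds asymptotically, and the real work is making it hold exactly for every $m$. That step is the main obstacle. Its failure at the prime $2$ is presumably why the statement is restricted to odd $\mathcal Q$: the term $p=2$ contributes too much to a Mertens-type sum at small $m$.

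My approach to that step is to replace $\nu_0$ by the adapted weight $\nuMangoldt$ of Remark~\ref{invariants}. For $\nuMangoldt$ the harmonicity should be an exact identity, so the chain conserves mass. I would then compare $\nu_0$ and $\nuMangoldt$ at the starting layer $\N_k(\mathcal Q)$ and along the chain, using $\nu_0 \le \nuMangoldt$ or the reverse monotonicity on odd numbers, verified by explicit estimates on $\sum_{p\ge 3}\frac{\log p}{p}(\ldots)$ via Rosser--Schoenfeld bounds for small $m$.
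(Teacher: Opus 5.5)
Your framework matches the paper's: a prime-by-prime downward chain on $\N_{\ge k}(\mathcal Q)$ absorbed at $\N_k(\mathcal Q)$, sub-invariance of $\nu_0$, the adjoint upward chain started from $\nu_0$ on $\N_k(\mathcal Q)$, and primitivity. The gap is the sub-invariance step, which is false for your chain. With $P(n\searrow n/p)=v_p(n)\log p/\log n$, the inequality you need at $m$ is
\[
\log m\sum_{p\in\mathcal Q}\frac{(v_p(m)+1)\log p}{p\log^2(mp)}\le 1 .
\]
When you pass to ``roughly $\log m\sum_p \frac{\log p}{p\log^2(mp)}\le 1$'' you drop the multiplicity factor, and that factor is not lower order. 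Take $m=3^a$ and $\mathcal Q$ the odd primes. Since $P(3^{a+1}\searrow 3^a)=1$, the $p=3$ term alone is $\nu_0(3^{a+1})/\nu_0(3^a)=\frac{a}{3(a+1)}$. The primes $p\ge 5$ contribute $1-O(1/a)$ by Mertens. So the left side tends to $4/3$. The real obstruction is therefore repeated odd primes at large $m$, not the prime $2$ at small $m$, and no Rosser--Schoenfeld check can close it.

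Switching to $\nuMangoldt$ does not rescue this. That weight is invariant for the prime-power chain of Example~\ref{vmdc}, not for your prime-only chain; for yours, the same $m=3^a$ computation again gives a ratio tending to $4/3$. The prime-power chain has its own defect: it jumps from $\N_{k+1}$ to $\N_{k-1}$ across the absorbing layer, so mass started on $\N_k(\mathcal Q)$ no longer reproduces the weight. Finally, $\zeta(s)\ge \frac{1}{s-1}$ gives $\nuMangoldt\le\nu_0$ pointwise, with a non-constant ratio. So your hoped-for $\nu_0\le\nuMangoldt$ is false, and a bound on $\sum_{A}\nuMangoldt$ cannot give the exact inequality for $\sum_A\nu_0$ without losing a constant.

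The missing idea is to tilt the prime weights. The paper sets $P(n\searrow n/p)=v_p(n)\beta_p\log p/\lambda(n)$, with $\beta_p=\frac{p}{p-2}$ and $\lambda(n)=\sum_{p\mid n}v_p(n)\beta_p\log p$. The identity $\beta_p/p=(\beta_p-1)/2$ caps the repeated-prime contribution at $\frac12-\frac{L}{2\lambda}$. Lemma~\ref{lem:oddzeta}, combined with a double Laplace representation, caps the new-prime contribution at $\frac12+\frac{L}{2\lambda}$. The two bounds sum to exactly $1$. Oddness is essential here because $\beta_2$ is undefined.
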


Without the requirement that the primes in $\mathcal Q$ are odd, this was conjectured in \cite{banks-martin}, and verified in special cases; however, it was observed in \cite{Lalmost} (see also \cite{Lichtman, glw}) that the claim is actually false if ${\mathcal Q}$ was permitted to contain $2$. The revised form above was proposed in \cite[Conj. 7.1]{Lichtman}.  As a corollary of this theorem, we see that $f(\mathbb N_k({\mathcal Q}))$ is a non-increasing function of $k$ whenever ${\mathcal Q}$ consists of odd primes. We also observe in Remark~\ref{remark:qualitative-1196} that Theorem~\ref{conj:oddBM} implies the qualitative form of Theorem~\ref{conj:1196}, namely the bound $f(A) \leq 1+o(1)$ as $x \to \infty$, thus yielding an alternate proof of the conjecture of \Erdos{}, \Sarkozy{} and \Szemeredi{} in its original form.

Following \cite{LP}, \cite{Lichtman}, we call a prime $p$ \emph{\Erdos{}-strong} if one has the inequality
$$f(A) \leq f(\{p\}) = \nu_0(p)$$ whenever $A$ is a primitive set contained in the set of natural numbers with least prime factor $p$.  In \cite{LP} it was observed that Theorem \ref{conj:EPS} would follow if all primes were \Erdos{}-strong.  This was verified for odd primes in \cite{Lichtman}, building on some partial results in \cite{LP}.  In Section \ref{2-strong-sec} we resolve the remaining case of the prime $2$, thus giving yet another proof of Theorem \ref{conj:EPS} and answering a question from \cite{Lichtman}.

\begin{theorem}\label{2-strong} $2$ is \Erdos{}-strong.
\end{theorem}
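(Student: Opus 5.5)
Throughout, $A$ is a primitive set of even numbers (least prime factor $2$) with $A\neq\{2\}$; we may assume $2\notin A$, since otherwise $A=\{2\}$ and there is nothing to prove. We must show $f(A)\le \nu_0(2)=\frac{1}{2\log 2}\approx 0.7213$.

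The plan is to use the von Mangoldt Markov chain that the paper builds for Theorems \ref{conj:1196} and \ref{conj:EPS}. Start a random walk at $n_0=2$. From a state $n$, move to $np$ for a prime $p$, with transition probability proportional to $\Lambda$-type weights normalised against $\nu$. Each even integer $n\ge 2$ has a hitting probability $P(n)$, the probability that the walk ever visits $n$. The key property of the chain is that every trajectory is a divisibility chain $2=n_0\mid n_1\mid n_2\mid\cdots$. A primitive $A$ meets each trajectory at most once, so
\[\sum_{a\in A} P(a)\le 1.\]
It therefore suffices to prove the pointwise comparison
\[\nu_0(a)\le \nu_0(2)\,P(a)\qquad\text{for every even } a>2.\]
With the paper's weight $\nu_\Lambda$, this comparison holds with equality up to a multiplicative factor. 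The real issue is the transfer back to $\nu_0$ at the small numbers $4,6,8,\dots$, where $\nu_0$ and the chain weight differ noticeably.

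The steps, in order, are as follows.
\begin{enumerate}[(i)]
\item Write $P(a)$ as a sum over factorisation paths of $a/2$ (orderings of its prime factors) of products of transition probabilities. Reduce, as in the paper's proof of Theorem \ref{conj:EPS}, to the ratio inequality $\nu_0(a)/\nu_0(2)\le P(a)$. This follows by induction on $\Omega(a)$ from a one-step inequality of the form
\[\nu_0(np)\le \nu_0(n)\cdot\Pr(n\to np)\]
for a suitable choice of transition probabilities. The supermartingale-type condition on the rows is $\sum_p \nu_0(np)\le \nu_0(n)$.
\item Check that row inequality. We have $\sum_p \frac{1}{np\log(np)}\approx \frac{1}{n}\int \frac{dt}{t\log t\,\log(nt)}$. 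By Mertens, this is about $\frac{1}{n\log n}\log\bigl(1+\tfrac{\log n}{\log 2}\bigr)$ heuristically, which exceeds $\nu_0(n)$ for large $n$. So the naive $\nu_0$-supermartingale fails. This is exactly why $2$ was left open and why von Mangoldt weights are needed.
\item Fix this as the paper does: run the chain with $\nu_\Lambda$ and prove $\nu_0\le C\nu_\Lambda$ with a constant $C$ chosen so that equality, or at least the right direction, holds at $n=2$. Then bound $f(A)\le C\sum_a\nu_\Lambda(a)\le C\,\nu_\Lambda(2)$. The intended outcome is that $C\nu_\Lambda(2)=\nu_0(2)$ after a sharp comparison.
\end{enumerate}

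The main obstacle is step (iii). The comparison $\nu_0(a)\le \frac{\nu_0(2)}{\nu_\Lambda(2)}\nu_\Lambda(a)$ has to hold for all even $a$, and it will likely fail for some small $a$, such as $4$, $6$ or $8$. To handle this, I would first split $A$ according to the second-smallest prime factor, or by membership in a finite exceptional set $E$ of small even numbers. For $a\notin E$, use the chain comparison with an explicit Mertens/Rosser–Schoenfeld-type error. For the part of $A$ meeting $E$, do a finite case analysis: if $4\in A$, no other multiple of $4$ lies in $A$, and so on. In each case, restart the chain at the appropriate node (for example at $2q$ for odd primes $q$). For odd starting points one can invoke the known Erd\H{o}s-strongness of odd primes from \cite{Lichtman}, using $A\cap 2q\N$ together with the chain. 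Finally, verify the resulting numerical inequality, which has little slack since $f(\N_1\cdot 2)$-type sets come close to $\nu_0(2)$, by explicit computation over the finitely many cases.
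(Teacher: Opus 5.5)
Your step (iii) fails, and it is the step carrying the whole argument. The inequality $\sum_{a\in A}\nu_\Lambda(a)\le \nu_\Lambda(2)$ is false for primitive sets of even numbers. On one side, $\zeta(s)\ge 1+2^{-s}$ gives
\[
\nu_\Lambda(2)=\int_1^\infty \frac{\log 2}{\zeta(s)2^s}\,\mathrm{d}s\le \int_1^\infty\frac{2^{-s}\log 2}{1+2^{-s}}\,\mathrm{d}s=\log\tfrac32<0.41 .
\]
On the other side, take $A=2\cdot\N_j$. Every element satisfies $\log a\ge (j+1)\log 2$, so \eqref{num-asym} gives $\sum_{a\in A}\nu_\Lambda(a)=(1+O(1/j))\,f(2\cdot\N_j)$. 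Moreover $f(2\cdot\N_j)=f(\N_{j+1})-f(\N_{j+1}(\mathcal Q))\to 1-\tfrac12=\tfrac12$, by \eqref{nk} and the result of \cite{Lalmost} quoted in Remark~\ref{remark:qualitative-1196}, with $\mathcal Q$ the odd primes.

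The reason is that $\nu_\Lambda$ is the hitting mass of the upward chain started at $1$. Paths from $1$ to an even $a$ need not pass through $2$ (e.g.\ $1\to 4$ with $q=4$, or $1\to3\to6$). So your claim that a chain started at $2$ reproduces $\nu_\Lambda$ ``up to a multiplicative factor'' is also false. No finite exceptional set can repair this, because the failure occurs for sets of arbitrarily large numbers. Also, \Erdos{}-strongness of an odd prime $q$ says nothing about $A\cap 2q\N$, whose elements have least prime factor $2$.

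The underlying misdiagnosis is in steps (i)--(ii). Your row condition $\sum_p\nu_0(np)\le\nu_0(n)$ is a single-path criterion that ignores the many orderings by which a chain can reach $a$, so it is no surprise that it fails. The right criterion for the adjoint construction (Example~\ref{adjoint}) is sub-invariance,
\[
\sum_q\nu(nq)\,\frac{\Lambda(q)}{\log(nq)}\le\nu(n),
\]
which weights each parent by its von Mangoldt downward probability. What you need is von Mangoldt transition probabilities, not a switch of weight to $\nu_\Lambda$.

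The paper writes $A=2\cdot A'$ and uses the shifted weight $\nu_2(n)=2\nu_0(2n)=\frac{1}{n\log(2n)}$ on all of $\N$, with the unmodified von Mangoldt chain. Its sub-invariance is exactly Lemma~\ref{lem:mangoldt-tail-and-B}(iii), $\sum_q\frac{\Lambda(q)}{q\log(mq)\log(2mq)}\le\frac{1}{\log(2m)}$. This is proved via \eqref{log-1} and the bound $-\frac{\zeta'}{\zeta}(1+r)\le\frac{\log 2}{2^r-1}$ of Lemma~\ref{lem:phi}; the $\log 2$ shift in the weight is precisely what that bound absorbs. The adjoint chain started at $1$ with mass $\nu_2(1)=1/\log 2$ then has hitting mass exactly $\nu_2(n)$ at every $n$. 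So \eqref{psum-weight-upper} gives $\sum_{n\in A'}\nu_2(n)\le\nu_2(1)$, i.e.\ $f(A)\le\nu_0(2)$, with no exceptional cases and no numerics.
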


Our methods also give a short proof of an inequality of Ahlswede, Khachatrian, and S\'ark\"ozy, which we present in Section \ref{AKS-sec}:

\begin{theorem}\label{AKS-thm}  If $A$ is a primitive set, then
$$ \sum_{n \in A \cap [y/x,y]} \frac{1}{n} \ll \frac{\log x}{\sqrt{\log\log x}}$$
whenever $3 \leq x \leq y$.
\end{theorem}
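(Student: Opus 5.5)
We prove Theorem \ref{AKS-thm} by running a Markov chain down the divisibility poset with von Mangoldt weights. Every integer $n$ with $\log n \geq \log(y/x)$ then sends its mass down through its divisors. The target bound arises as the product of two factors: a logarithmic-width factor $\log x$, and a factor $1/\sqrt{\log\log x}$ coming from the maximal probability that the chain hits a given layer $\N_k$.

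\textbf{The chain.} Start the chain at a random integer $n_0$ chosen with weight $1/n$ on $[y/x, y]$, normalized by roughly $\log x$. From $n$, step to $n/p$, where the prime power $p^j \,\|\, n$ is selected with probability $\Lambda(d)/\log n$ over divisors $d\mid n$. The identity $\sum_{d\mid n}\Lambda(d)=\log n$ makes this a genuine probability distribution.

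\textbf{The key transport identity.} The harmonic measure $1/n$ (restricted suitably) is approximately stationary, or superharmonic, under this chain. The identity behind this is
$$\sum_{p,\,j}\frac{\Lambda(p^j)}{\log(mp^j)}\frac{1}{mp^j}\approx \frac{1}{m},$$
which follows from Mertens' theorem for $\sum \log p/p$ across scales. Consequently, the probability that the chain ever visits a given $m$ is at most about $(1/m)/\log x$, up to constants.

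\textbf{Using primitivity and counting.} The key observation is that a primitive set meets each downward chain path at most once, because the visited integers form a divisibility chain. Summing over $m\in A\cap[y/x,y]$ therefore gives $\sum_{m \in A} 1/m \ll \log x \cdot \Pr(\text{hit } A)\le \log x$. This is only the trivial bound, so we must improve it.

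\textbf{Extracting $1/\sqrt{\log\log x}$.} To gain the extra factor, we restrict attention to the window $[y/x,y]$ and track $\Omega$ along the path. As the chain descends from scale $y$ to scale $y/x$, it removes primes in the range up to $x$. Heuristically, the number of such steps, weighted by $\log p$, behaves like a Poisson-type sum with mean about $\log\log x$.

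We then replace the single chain with a symmetric argument. Pair the chain started at $y$ with its $\Omega$-profile: among integers in the window with a common "top" divisor, those with different values of $\Omega(n)$ within the window form a chain segment. The aim is to show that the chain visits $A$ with probability at most the maximum point mass of the distribution of the number of prime factors in $(1,x]$ removed. By a local limit, or Sperner/LYM-type, bound for sums of independent Bernoulli variables with total variance about $\log\log x$, this point mass is $\ll 1/\sqrt{\log\log x}$.

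\textbf{Main obstacle.} The hardest step is making the LYM-type argument rigorous. Concretely, we need a chain whose increments within the window are independent enough that $A$, being an antichain, captures at most one value of a random walk. That walk must have variance of order $\log\log x$.

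The first thing to try is a two-stage construction. Write $n = ab$, where $a$ is the part of $n$ built from primes at most $x$ and $b$ is the rest. Condition on $b$, and run the Mangoldt chain only on $a$, whose prime factors are modeled by Kubilius-type independent variables. Then apply the Erdős–Sperner bound to the product poset of divisors of $a$. Finally, the conditioning error must be controlled using the fundamental lemma of sieve theory.
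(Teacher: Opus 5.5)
Your decomposition $n=ab$ into an $x$-smooth part and an $x$-rough part, with a sieve bound on the rough part, is the right skeleton; the paper's proof is fibred over the $x$-rough part in exactly this way. However, the step that carries the theorem, extracting $1/\sqrt{\log\log x}$, is only planned, not proved, and the plan lacks the two ideas that make it work.

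\textbf{The missing LYM step.} You need a chain whose path probabilities factor, so that summing over orderings produces a Poisson LYM weight. The von Mangoldt chain does not do this. A path $n\to n/q_1\to n/(q_1q_2)\to\cdots$ has probability $\prod_i\Lambda(q_i)/\log(n/(q_1\cdots q_{i-1}))$, which depends on the ordering. Moreover, the chain is size-biased by $\log p$. So from a typical $n_0\approx y$ with $y$ much larger than $x$, its first step usually removes a prime exceeding $x$ and leaves $[y/x,y]$, and the number of steps inside the window is not Poisson with mean $\log\log x$. An unweighted \Erdos{}--Sperner bound for the divisors of one fixed $a$ does not help either: the fibre $\{a:ab\in A\}$ is spread over many $x$-smooth $a$, and you must sum $1/a$ rather than count.

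The paper instead runs the upward multiplicative walk of Example~\ref{msrw} with $w(p^j)=p^{-js}/Z$ for $p\le x$, where $Z=\sum_{p\le x}\sum_{j\ge 1}p^{-js}$. The walk is started at every $x$-rough $m\le y$ with mass $m^{-s}$. For $n=mq_1\cdots q_k$ with pairwise coprime prime powers $q_i$ of primes $\le x$, the $k!$ orderings give
\[ h_{b\nearrow}(n)\ \ge\ n^{-s}\frac{k!}{Z^k}=\frac{n^{-s}}{e^Z\,\mathbb{P}(X=k)},\qquad X\sim\mathrm{Poisson}(Z). \]
Hence $h_{b\nearrow}(n)\gg n^{-s}\sqrt{Z}e^{-Z}$ with $Z=\log\log x+O(1)$, which is precisely the LYM inequality you wanted.

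\textbf{Uniformity in $y$.} Your plan does not address this. Even granting a fibrewise bound $\sum_{a:\,ab\in A}1/a\ll\log x/\sqrt{\log\log x}$, summing over $x$-rough $b\le y$ with weight $1/b$ costs a factor $\asymp\log y/\log x$. This is unbounded when $y$ is large compared with $x$, and the fundamental lemma only confirms this size. Taking the better of the LYM bound and a separate Rankin bound fibre by fibre still loses a slowly growing factor. The paper obtains both savings at once via the tilt $s=1-\frac{1}{10\log x}$, which is Rankin's trick built into the chain. The sieve bound of $O(2^k/\log x)$ $x$-rough integers in $[2^k,2^{k+1}]$ gives total initial mass $\ll y^{1-s}$, while $n^{-s}\asymp y^{1-s}/n$ on $[y/x,y]$ because $x^{1-s}=e^{1/10}$.

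\textbf{Errors in the preliminary steps.} These only affect the trivial bound. Your ``transport identity'' is false, since $\sum_q\Lambda(q)/(q\log(mq))$ diverges like $\sum_p 1/p$; the sub-invariant weight for the von Mangoldt chain is $1/(n\log n)$, not $1/n$. Also, the primitivity argument needs \emph{lower} bounds on hitting masses, not the upper bound you state. The trivial bound already follows from $h\ge b$.
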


This result was first established in \cite[Theorem 3]{AKS}; the $y=x$ case of this theorem, namely
\begin{equation}\label{ax}
\sum_{n \in A \cap [1,x]} \frac{1}{n} \ll \frac{\log x}{\sqrt{\log\log x}}
\end{equation}
is a classical result of Behrend \cite{behrend}.  In fact, the sharper bound
$$ \sum_{n \in A \cap [1,x]} \frac{1}{n} \leq (1 + o(1)) \frac{\log x}{\sqrt{2\pi \log\log x}}$$
as $x \to \infty$ is known (uniformly in $A$), which is best possible up to the $o(1)$ error; see \cite{ess1}, \cite{ess2}.  We were unable to recover this bound with our methods.  However, it should be possible to recover several other results from \cite{AKS} by these techniques; we leave this task to the interested reader.  Furthermore, our method naturally gives an improvement to Theorem~\ref{AKS-thm} of ``LYM type''; see Remark \ref{lym-rem}.

Finally, in Section \ref{divis-sec} we present a result on a separate problem of \Erdos{}, S\'ark\"ozy, and Szemer\'edi:

\begin{theorem}[\Erdos{}--S\'ark\"ozy--Szemer\'edi, \#1217] \label{conj:1217}
Let $A \subset \N$ be such that the upper doubly logarithmic density
$$ \Delta \coloneq \limsup_{x\to\infty}\frac{1}{\log\log x} f(A \cap [1,x])$$
is positive.  Then there exists a strictly increasing infinite divisibility chain
$$ n_0 | n_1 | n_2 | \dots$$
in $A$ such that
$$ \limsup_{x\to\infty}\frac{1}{\log\log x} \# \{ i: n_i \leq x \} \geq \Delta.$$
\end{theorem}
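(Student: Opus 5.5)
The plan is to build a Markov chain on the divisibility poset whose transition probabilities are von Mangoldt weights, and then to extract the chain from a single trajectory via a law of large numbers. Starting from $n$, the chain moves to $np$ with probability proportional to $\Lambda(p)$-type weights. Concretely, I would use the weight $\nu_\Lambda$ from Remark \ref{invariants}, normalised so that for each $n$
$$ \sum_{p} P(n \to np) = 1, \qquad P(n\to np) \approx \frac{\nu_\Lambda(np)}{\nu_\Lambda(n)}\cdot(\text{local factor}),$$
and so that $\nu_\Lambda$ is an (approximately) invariant, or "harmonic", measure. The key consequence I want is the hitting identity
$$\Pr(\text{chain started at } 1 \text{ visits } n) \asymp \nu_\Lambda(n)\log\log n \cdot(\ldots).$$
More precisely, the expected number of visits of the chain to $A\cap[1,x]$ should be
$$\sum_{n \in A\cap [1,x]} \nu_\Lambda(n) \, w(n),$$
with $w$ normalised so that this quantity is $f(A\cap[1,x]) + O(1)$, up to asymptotic equivalence of $\nu_\Lambda$ and $\nu_0$. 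Because the chain is increasing in the divisibility order, any trajectory $1=m_0|m_1|m_2|\cdots$ is automatically a divisibility chain. The elements of $A$ it visits therefore form a chain $n_0|n_1|\dots$ inside $A$, and this is what we want to output.

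The second step handles time. The chain multiplies by a prime with $\log\log$-increment roughly of mean $1$ per step, since the steps are von Mangoldt weighted, which is like choosing $\log p$ uniformly on the $\log$ scale. So the trajectory reaches size $x$ after about $\log\log x$ steps. The expected count $\E\,\#\{i: n_i\le x\}$ then equals $f(A\cap[1,x])(1+o(1))+O(1)$. Taking a sequence $x_j\to\infty$ along which $f(A\cap[1,x_j])\ge(\Delta-\eps)\log\log x_j$, the random variable
$$X_j = \frac{\#\{i : n_i \le x_j\}}{\log\log x_j}$$
has $\E X_j \ge \Delta-\eps - o(1)$. It is also bounded: $X_j\le(\#\text{steps to }x_j)/\log\log x_j$, which has bounded expectation and concentrates near $1$. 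By reverse Fatou, using uniform integrability from this domination,
$$\E \limsup_j X_j \ge \limsup_j \E X_j \ge \Delta-\eps.$$
Hence some trajectory has $\limsup X_j \ge \Delta-\eps$.

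To remove $\eps$, I would use a diagonal choice with $\eps_j\to0$ along the same $x_j$, which gives $\E\limsup X_j\ge\Delta$ directly. Positivity of the limsup forces the visited chain to be infinite, and it is strictly increasing, so this completes the argument.

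I expect the main obstacle to be the first step: establishing that the visitation probability of $n$ is comparable, uniformly and with only $o(\log\log x)$ total error, to $\nu_0(n)\log\log n$ (or to $\nu_0(n)$ after the right normalisation). This is where the specific von Mangoldt weighting matters, via Mertens-type estimates $\sum_{p\le y}\Lambda(p)/p=\log y+O(1)$, which handle the renewal-theory behaviour of $\log\log$ of the trajectory. Small $n$ contribute only $O(1)$ and are harmless. The domination step also needs a tail bound showing the number of steps before exceeding $x$ is $O(\log\log x)$ in $L^2$. I would get this from the i.i.d.-like structure of the increments of $\log\log m_k$.
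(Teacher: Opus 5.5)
Your overall plan matches the paper's: an upward chain from $1$ built from $\nu_\Lambda$, expected visits to $A\cap[1,x_j]$ compared with $f(A\cap[1,x_j])$, a second-moment bound, and reverse Fatou. However, the step you call the main obstacle is left open, and the route you sketch would not close it.

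\textbf{The hitting identity.} No approximation is needed here. Allow all prime-power steps, with $P(n\nearrow nq)=\frac{\nu_\Lambda(nq)}{\nu_\Lambda(n)}\frac{\Lambda(q)}{\log(nq)}$. These probabilities sum to exactly $1$ because $\nu_\Lambda$ is exactly invariant for the von Mangoldt downward chain (Example~\ref{invariants}). Then \eqref{nu-recurse} and \eqref{h-recurse-upper} with $b=1_{n=1}$ give, by induction, that the visit probability is exactly $h(n)=\nu_\Lambda(n)$. Hence $\E_1\#\{i:n_i\in A\cap[1,x]\}=f(A\cap[1,x])+O(1)$ by \eqref{num-asym}. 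Jumps over elements do no harm here, since you only count visits.

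By contrast, an approximately invariant chain with $h\asymp\nu_0$ is useless. To get $\Delta$ rather than $c\Delta$ you need $\sum_{n\in A\cap[1,x]}h(n)\ge f(A\cap[1,x])-o(\log\log x)$ for \emph{every} $A$. Your displayed $h(n)\asymp\nu_\Lambda(n)\log\log n$ is also off by a factor $\log\log n$: summed over $n\le x$ it would give about $\frac12(\log\log x)^2$ visits.

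Suppose, as you describe, you keep only prime steps and renormalise, $\tilde P(m\to mp)=c(m)P(m\nearrow mp)$ with $c(m)\ge 1$. This fails the requirement above.
\begin{itemize}
\item For squarefree $n$, every path of the exact chain from $1$ to $n$ uses only prime steps. Comparing path by path gives $h(n)\ge c(1)\nu_\Lambda(n)$, where $c(1)=1/\sum_p\nu_\Lambda(p)>1$, because invariance at $n=1$ puts mass on the steps $1\to p^k$, $k\ge2$.
\item The expected number of chain elements in $[1,x]$ is still $\log\log x+O(1)$, as your own renewal heuristic predicts.
\item So the non-squarefree integers, a set of positive density, are underweighted by a constant factor. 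For that $A$, your lower bound for $\E X_j$ falls short of $\Delta$.
\end{itemize}

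\textbf{Uniform integrability.} This step is also only a sketch. Write $N_j$ for the number of chain elements in $[1,x_j]$. The bound $X_j\le N_j/\log\log x_j$ is not domination by a fixed integrable variable, so you need $\sup_j\E_1 N_j^2/(\log\log x_j)^2<\infty$. The increments of $\log\log n_k$ are not i.i.d.: their law depends on the current state and is only asymptotically exponential for large $n_k$. A renewal argument would therefore need uniform control that you have not supplied.

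The paper gets this bound in two lines. If $n$ lies on the chain, at most $\Omega(n)$ chain elements divide it, so
$\E_1N_j^2\ll\sum_{n\le x_j}\Omega(n)\nu_\Lambda(n)\ll\sum_{p^a\le x_j}p^{-a}\log\log x_j\ll(\log\log x_j)^2$,
using $\nu_0(dm)\le\nu_0(m)/d$ and Mertens.

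With these two repairs, the rest of your argument goes through as in the paper: the choice of $x_j$ with $\eps_j\to0$, reverse Fatou, and positivity of the limsup forcing an infinite chain.
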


This result was conjectured in \cite{ess0}, under the stronger hypothesis that $A$ had positive lower logarithmic density.  A classical result of Davenport and \Erdos{} \cite{davenport} asserts that all sets of positive lower natural density contain at least one strictly increasing infinite divisibility chain.

\begin{remark}\label{GPT-rem-1} Let $h \geq 1$.  If a set $A \subset \N$ is ``$h$-primitive'' in the sense that all strictly increasing divisibility chains $n_1 | \dots | n_k$ in $A$ have length at most $h$, then by iteratively peeling off the minimal elements of $A$ in the divisibility poset, one may decompose $A$ into the union of at most $h$ primitive sets (cf., Figure \ref{fig-divis}).  Using the triangle inequality, one can then obtain analogues of the above results which lose a factor of $h$.  For instance, if we additionally have $A \subset [x,\infty)$, then from Theorem~\ref{conj:1196} we have 
\begin{equation}\label{fah}
f(A) \leq \left(1 + O\left(\frac{1}{\log x}\right)\right) h.
\end{equation}
\end{remark}

\subsection{Methods of proof}

The strategy of proof in all of the above results is to construct either an increasing random divisibility chain
\begin{equation}\label{div-up}
 n_0 | n_1 | n_2 | \dots
 \end{equation}
or a decreasing random divisibility chain
\begin{equation}\label{div-down}
\dots | n_2 | n_1 | n_0
\end{equation}
in the divisibility poset $(\N,|)$, and exploit the basic duality between primitive sets and divisibility chains, in that any primitive set can meet such a chain in at most one natural number $n$.  Such chains will be constructed via various upward or downward Markov chains on the divisibility poset. A particularly natural choice for a downward Markov chain \eqref{div-down} is provided by what we call the \emph{von Mangoldt downward chain}, in which a natural number $n \in \N_{\geq 1}$ transitions to a factor $n/q$ with probability $\Lambda(q)/\log n$, where $\Lambda$ is the von Mangoldt function.  This is a Markov chain by the basic identity
\begin{equation}\label{vmi}
\sum_{q|n} \Lambda(q) = \log n
\end{equation}
for all $n \geq 1$.  For future reference, we also recall the Dirichlet series transform
\begin{equation}\label{lam-s}
\sum_{q}\frac{\Lam(q)}{q^s}
=-\frac{\zeta'(s)}{\zeta(s)}
\end{equation}
of the identity \eqref{vmi}, valid for all\footnote{In this paper, we will not need to analytically continue any of these identities into the complex plane.} $s>1$.  Here, of course, $\zeta$ denotes the Riemann zeta function.  Many of the proofs of the results above will involve working with some minor modification of this von Mangoldt downward chain.

For several of the arguments, it is more convenient to introduce an \emph{adjoint} of such a downward chain with respect to a reference weight $\nu$, producing an upward divisibility chain \eqref{div-up} rather than a downward one.  In order for the adjoint construction to work, the 
weight $\nu$ needs to be invariant, or at least sub-invariant, with respect to the downward Markov chain; in the sub-invariant case, this requires one to adjoin an absorbing state $\infty$ to the Markov chain, though this state plays little role in the analysis.  It turns out that the doubly harmonic weight $\nu_0$ defined in \eqref{nu-def} will be sub-invariant against the von Mangoldt downward chain and several of its variants, which is the main ingredient needed to establish most of the above results. (We also construct a genuinely invariant weight $\nuMangoldt$ asymptotic to $\nu_0$ in Remark \ref{invariants}.)

Many of the arguments can be rephrased in the language of flow networks rather than Markov chains; see Section~\ref{subsection:flow} for an example.

\begin{remark}[Interpretation using chain/antichain duality] A result of Stanley \cite{stanley} asserts that the convex hull of the indicator functions $1_A$ of antichains in a poset ${\mathcal N}$ is given by the \emph{Stanley chain polytope} of weight functions $w \colon {\mathcal N} \to [0,+\infty)$ which obey the condition $\sum_j w(n_j) \leq 1$ for all strictly increasing chains $n_0, n_1, \dots$ in ${\mathcal N}$.  From this and linear programming duality (or the Farkas lemma), we see that if $\nu \colon {\mathcal N} \to [0,+\infty)$ is a weight and $M>0$, then an upper bound of the form $\sum_{n \in A} \nu(n) \leq M$ holds for all antichains $A \subset {\mathcal N}$ if and only if there is a probability distribution on chains such that each element $n$ of the poset lies in the chain with probability at least $\nu(\{n\})/M$.  This observation already helps explain why the strategy of this paper is a viable one; but a key additional insight is that in the context of the divisibility poset, taking the random chain to be \emph{Markovian} can be a highly efficient choice, especially if the chain is related to the von Mangoldt process. 
\end{remark}

\subsection{Notation}

We use $X \ll Y$, $Y \gg X$, or $X = O(Y)$ to denote the bound $|X| \leq CY$ for an absolute constant $C$. If the implied constant $C$ needs to depend on additional parameters, we indicate this via subscripts.

All sums over $p$ are understood to be over primes.  

If $E$ is a set, we use $\# E$ to denote its cardinality.  If $P$ is a statement or event, we use $1_P$ to denote its indicator; thus $1_P=1$ if $P$ is true and $1_P=0$ if $P$ is false.

If $A$ is a set of natural numbers and $c \in \N$, we use $c \cdot A \coloneq \{ c n: n \in A \}$ to denote the dilate of $A$ by $c$.

\section{Downward and upward Markov chains}

Our arguments will rely on constructing various downward and upward Markov chains, which lead to downward and upward divisibility chains, respectively.  In this section, we lay out our general notation for such chains.

\subsection{Downward chains}

We begin with our notation for downward chains.

\begin{definition}[Downward Markov chains]\label{down-markov}  
Let ${\mathcal N}$ be a set of natural numbers, and ${\mathcal A}$ a subset of ${\mathcal N}$.  A \emph{downward Markov chain} on ${\mathcal N}$ with absorbing states ${\mathcal A}$ is a collection of transition probabilities $P(n \searrow m)$ for $n,m \in {\mathcal N}$ obeying the following axioms:
\begin{itemize}
    \item[(i)] One has $P(n \searrow m) \geq 0$ for all $n,m \in {\mathcal N}$.  Furthermore, $P(n \searrow m)$ vanishes unless either $n \in {\mathcal N} \backslash {\mathcal A}$ and $m = n/q$ for some natural number $q > 1$, or $n \in {\mathcal A}$ and $m=n$.
    \item[(ii)]  One has
\begin{equation}\label{searrow}
\sum_m P(n \searrow m) = 1
\end{equation}
    for all $n \in {\mathcal N}$.  (In particular, this forces $P(n \searrow n) = 1$ for an absorbing state $n \in {\mathcal A}$.)  Here we adopt the convention that $P(n \searrow m)$ vanishes if $n$ or $m$ lies outside ${\mathcal N}$.
\end{itemize}
\end{definition}

\begin{example}[Randomly dividing out a prime]\label{Random} One can create a downward Markov chain on $\N$ with absorbing state $\{1\}$ by defining $P(n \searrow n/p)$ to equal $\frac{1}{\omega(n)}$ whenever $n$ is divisible by a prime $p$ (with $\omega(n)$ the number of distinct prime factors of $n$), and also defining $P(1 \searrow 1) = 1$, with $P(n \searrow m) = 0$ in all other cases.
\end{example}

\begin{example}[Mertens downward chain]\label{mertens}  The \emph{Mertens downward chain} on $\N$ with absorbing state $\{1\}$ is defined to be the downward Markov chain with $P(n \searrow m)$ defined to equal $1$ if $m$ is of the form $n/P(n)$ with $P(n)$ the largest prime factor of $n$ (with the convention $P(1)=1$), and defined to equal $0$ otherwise.  This can be easily verified to be a downward Markov chain (albeit one which is deterministic, since each state $n$ transitions to exactly one state $m$).  This chain can retroactively be viewed as implicit in much of the previous literature on primitive sets \cite{erdos35,LP, Lichtman, Lalmost,glw}.
\end{example}

The next example of a downward Markov chain is fundamental to our arguments; most of the other downward chains we consider in this paper will be slight modifications of this chain.

\begin{example}[von Mangoldt downward chain]\label{vmdc}  The \emph{von Mangoldt downward chain} on $\N$ with absorbing state $\{1\}$ is defined to be the downward Markov chain with transition probabilities
\begin{equation}\label{vmdc-def}
P( n \searrow n/q ) \coloneq \frac{\Lam(q)}{\log n}
\end{equation}
for $n \geq 2$ and $q|n$ as well as $P(1 \searrow 1) = 1$, with all other transitions $P(n \searrow m)$ vanishing.  It is easy to verify using \eqref{vmi} that this is a downward Markov chain.
\end{example}

Suppose we have a downward Markov chain on ${\mathcal N}$ with absorbing states ${\mathcal A}$.  Then every $n_0 \in {\mathcal N}$ generates a random downward divisibility chain \eqref{div-down}
where each $n_{i+1}$ is obtained from $n_i$ by a Markov process with transition probability $P(n_i \searrow n_{i+1})$.  Note that such chains will strictly decrease until they reach an absorbing state, at which point they stay constant; see Figure \ref{fig-down}.  We let $\mathbb{P}_{n_0 \searrow}$ denote the probability measure associated to such a downward chain.  If $A$ is a primitive set contained in the non-absorbing states ${\mathcal N} \backslash {\mathcal A}$, then it can only meet such a chain at most once, so on taking probabilities, we see that
\begin{equation}\label{psum}
 \sum_{k=0}^\infty \mathbb{P}_{n_0\searrow}( n_k \in A) \leq 1.
\end{equation}

\begin{figure}
  \centering
  \includegraphics[width=0.75\textwidth]{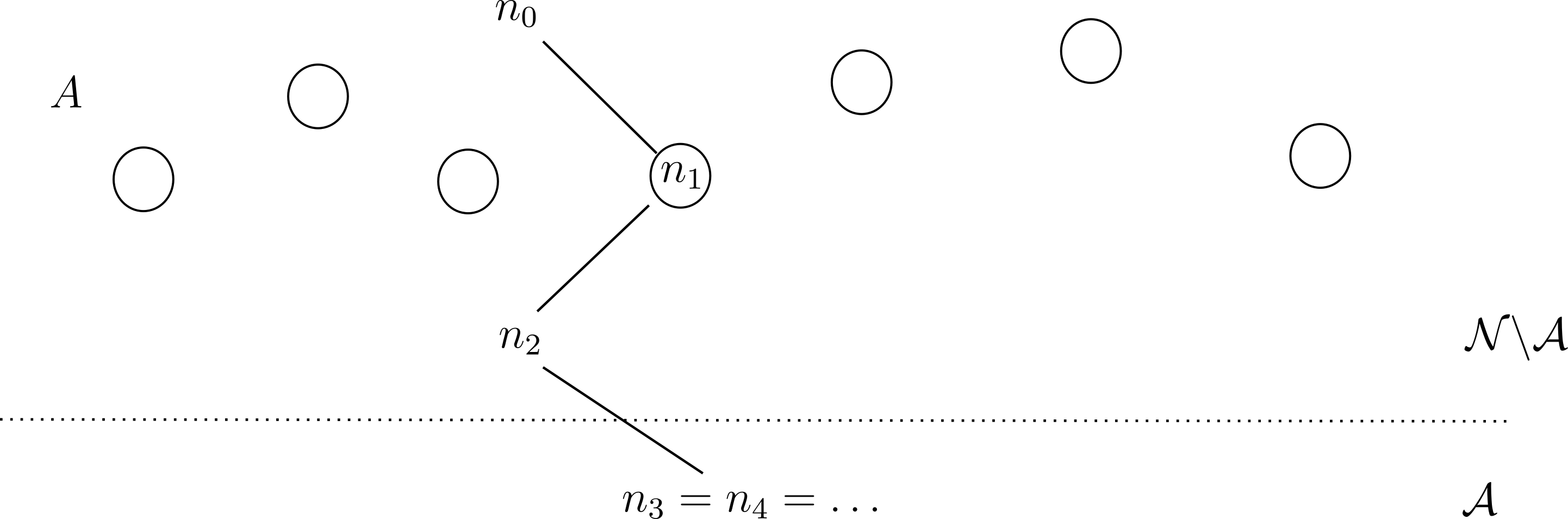}
  \caption{A downward divisibility chain that reaches an absorbing state $n_3 \in {\mathcal A}$.  Any primitive set $A$ avoiding the absorbing states ${\mathcal A}$ will meet this chain at most once, although the chain could conceivably ``jump over'' such a set.}
  \label{fig-down}
\end{figure}

\begin{example}\label{chain-ex} If $n_0=12$, then the Mertens downward chain (Example \ref{mertens}) almost surely produces the downward divisibility chain
$$ \dots | 1 | 2 | 4 | 12.$$
The von Mangoldt downward chain produces this divisibility chain with probability $\frac{\log 3}{\log 12} \frac{\log 2}{\log 4}$, but produces other chains as well; for instance, it produces the chain
\begin{equation}\label{jump}
\dots | 1 | 1 | 3 | 12
\end{equation}
with probability $\frac{\log 2}{\log 12}$.  The Mertens downward divisibility chains have the additional property of being \emph{$L$-divisible} in the sense of \cite{Lichtman}, and are thus particularly well suited for studying that concept; but we will not make use of $L$-divisibility\footnote{Indeed, it was observed in \cite[Proposition 5.3]{Lichtman} that the analogue of Theorem \ref{conj:1196} for $L$-primitive sets fails by a factor of $e^\gamma$.  In retrospect, this helps explain why it becomes advantageous to move away from the Mertens chain and consider the von Mangoldt chain instead.} in this paper.  

The von Mangoldt chain is particularly tractable for calculations, thanks to identities such as \eqref{vmi} and \eqref{lam-s}.  However, it has the defect that it can ``jump over'' primitive sets, due to the fact that $\Lambda$ is supported on prime powers in addition to primes.  For example, the chain \eqref{jump} jumps over the primitive set $\N_2$ (cf., Figure \ref{fig-divis}).  In several of our arguments we will need to modify the von Mangoldt chain slightly in order to eliminate such jumps.
\end{example}

In many cases we will not start such chains at a single point $n_0$, but instead over a range of starting points $n_0 \in {\mathcal N}$, each being assigned an initial mass $b(n_0) \geq 0$; we do not require the total mass $\sum_{n_0 \in {\mathcal N}} b(n_0)$ to equal one.  For any such assignment $b$, we see from \eqref{psum} and Tonelli's theorem that
\begin{equation}\label{psum-weight}
\sum_{n \in A} h_{b\searrow}(n) \leq \sum_{n_0 \in {\mathcal N}} b(n_0),
\end{equation}
where the \emph{downward hitting mass} $h_{b\searrow}(n)$ at a non-absorbing state $n \in {\mathcal N} \backslash {\mathcal A}$ is defined by the formula
$$ h_{b\searrow}(n) \coloneq \sum_{n_0\in\mathcal N}b(n_0)\sum_{k=0}^\infty \mathbb{P}_{n_0\searrow}( n_k = n ).$$
Informally, $h_{b\searrow}(n)$ is the total mass of $b$ that is transferred through $n$ by the downward divisibility chain.  If $b$ is normalized to have unit mass, then $h_{b\searrow}(n)$ is also the probability that a random downward chain with $n_0$ drawn using $b$ as the probability distribution contains $n$.

Note that in order for the chain to reach $n$, it either has to start at $n$ or pass through a parent $nq$ of $n$ in the chain.  This gives the recursive identity
\begin{equation}\label{h-recurse}
 h_{b\searrow}(n) = b(n) + \sum_{q \geq 2: nq \in {\mathcal N}} h_{b\searrow}(nq) P(nq \searrow n)
\end{equation}
for any $n \in {\mathcal N} \backslash {\mathcal A}$.

\begin{example}  Let $n_0 = p_1 \dots p_N$  be the product of $N$ distinct primes, and consider the downward flow of dividing out by a random prime given by Example \ref{Random}.  We let $b(n) = 1_{n=n_0}$ be the Kronecker mass at $n_0$.  A straightforward calculation then shows that
$$ h_{b \searrow}(n) = \frac{1}{\binom{N}{\omega(n)}}$$
for all $n|n_0$, leading to the \emph{LYM inequality} \cite{lubell}, \cite{meshalkin}, \cite{yamamoto}
$$ \sum_{n \in A: n | n_0} \frac{1}{\binom{N}{\omega(n)}} \leq 1$$
for any primitive set $A$.  As is well-known, this implies the \emph{Sperner inequality} \cite{sperner}
$$ \# \{ n \in A : n | n_0\} \leq \binom{N}{\lfloor N/2 \rfloor}.$$
Indeed, the arguments here can be viewed as generalizations of those in \cite{lubell}.
\end{example}

A key concept in this paper will be that of a (sub-)invariant weight.

\begin{definition}[Invariant and sub-invariant weights]  Let $P(n \searrow m)$ be a downward Markov chain on some set ${\mathcal N}$ of natural numbers with absorbing states ${\mathcal A}$, and let $\nu \colon {\mathcal N} \to (0,+\infty)$ be a weight.  We say that $\nu$ is an \emph{invariant weight} for this chain if one has the identity
\begin{equation}\label{inv}
\sum_{q > 1}  \nu(nq) P( nq \searrow n ) = \nu(n)
\end{equation}
for all $n \in {\mathcal N}$, and a \emph{sub-invariant weight} if one has the inequality
\begin{equation}\label{sub}
\sum_{q > 1} \nu(nq) P( nq \searrow n )  \leq \nu(n)
\end{equation}
for all $n \in {\mathcal N}$.
\end{definition}

\begin{example}\label{invariants}  The \emph{Mertens weight}
$$ \nu_{\mathrm{Mertens}}(n) \coloneq \frac{e^\gamma}{n} \prod_{p < P(n)} \left(1-\frac{1}{p}\right)$$
can easily be verified (by telescoping series) to be an invariant weight for the Mertens downward chain. These weights appear in the literature \cite{LP, Lichtman, Lalmost,glw}, and (excepting the $e^\gamma$ constant) originate with \Erdos{}' seminal paper \cite{erdos35}.

The \emph{von Mangoldt weight}\footnote{This closed form for the invariant weight was discovered independently by Will Sawin and the second author, while the abstract existence of such a weight was conjectured by the eighth author and established by George Lowther; see \url{https://www.erdosproblems.com/forum/thread/1196}.}
\begin{equation}\label{mang-def}
 \nuMangoldt(1)\coloneq 1,\qquad \nuMangoldt(n) \coloneq \int_1^\infty \frac{\log n}{\zeta(s) n^s}\,\dd s\quad (n>1)
 \end{equation}
is similarly an invariant weight for the von Mangoldt downward chain.  Indeed, for any $n>1$ we can use \eqref{vmdc-def}, \eqref{lam-s}, and integration by parts to compute
\begin{align*}
    \sum_{q > 1} \nuMangoldt(nq) P( nq \searrow n ) 
    &= \int_1^\infty \sum_q \frac{\Lambda(q)}{\zeta(s) (nq)^s}\,\dd s \\
    &= - \int_1^\infty \frac{\zeta'(s)}{\zeta(s)^2 n^s}\,\dd s \\
    &= \int_1^\infty \frac{\log n}{\zeta(s) n^s}\,\dd s \\
    &= \nuMangoldt(n)
\end{align*}
as desired, while for $n=1$ a similar calculation gives
$$     \sum_{q > 1} \nuMangoldt(q) P( q \searrow 1 )  = - \int_1^\infty \frac{\zeta'(s)}{\zeta(s)^2}\,\dd s = 1 = \nuMangoldt(1)$$
since $\frac{1}{\zeta(s)}$ increases from $0$ to $1$ as $s$ ranges from $1$ to infinity.  Using the standard asymptotic $\frac{1}{\zeta(s)} = s-1 - \gamma (s-1)^2 +O((s-1)^3)$ for $s>1$ and a routine calculation, one can establish the asymptotic
\begin{equation}\label{num-asym}
\nuMangoldt(n) = \left(1 - \frac{2\gamma}{\log n} + O\left(\frac{1}{\log^2 n}\right) \right) \nu_0(n)
\end{equation}
for all $n \in \N_{\geq 1}$.  In Lemma \ref{lem:mangoldt-tail-and-B}(ii) below we will show that $\nu_0$ is also a sub-invariant weight for the von Mangoldt process.
\end{example}

\subsection{Upward chains}

In several of our arguments, it will be more convenient to work with an upward chain than a downward one.

\begin{definition}[Upward Markov chains]\label{up-markov}  
Let ${\mathcal N}$ be a set of natural numbers, and introduce an absorbing state $\infty$.  An \emph{upward Markov chain} on ${\mathcal N} \cup \{\infty\}$ is a collection of transition probabilities $P(n \nearrow m)$ for $n,m \in {\mathcal N} \cup \{\infty\}$ obeying the following axioms:
\begin{itemize}
    \item[(i)] One has $P(n \nearrow m) \geq 0$ for all $n,m \in {\mathcal N} \cup \{\infty\}$.  Furthermore, $P(n \nearrow m)$ vanishes unless either $n, m \in {\mathcal N}$ and $m = nq$ for some natural number $q>1$, or $m = \infty$.
    \item[(ii)]  One has
\begin{equation}\label{nearrow}
\sum_m P(n \nearrow m) = 1
\end{equation}
    for all $n \in {\mathcal N} \cup \{\infty\}$.  (In particular, this forces $P(\infty \nearrow \infty) = 1$.) Here we adopt the convention that $P(n \nearrow m)$ vanishes if $n$ or $m$ lies outside ${\mathcal N} \cup \{\infty\}$.
\end{itemize}
\end{definition}

\begin{example}[Multiplicative simple random walk]\label{msrw}  Let $w$ be a probability measure on $\N_{\geq 1}$; thus $w(q) \geq 0$ for all $q \geq 2$ and $\sum_{q=2}^\infty w(q)=1$.  Then one can define an upward Markov chain on $\N \cup \{\infty\}$ by setting $P(n \nearrow nq) = w(q)$ for all $n \in \N$ and $q \geq 2$, $P(\infty \nearrow \infty)=1$, and all other transition probabilities equal to zero.  In this case, the absorbing state $\infty$ is never reached from any natural number.
\end{example}

The most important way to generate an upward Markov chain in our arguments will be by taking an ``adjoint'' of a downward Markov chain with respect to a sub-invariant weight.

\begin{example}[Adjoint chain]\label{adjoint}  Let $P(n \searrow m)$ be a downward Markov chain for some set ${\mathcal N}$ of natural numbers with absorbing states ${\mathcal A}$, and suppose that $\nu$ is a sub-invariant weight for this chain.  We define the \emph{adjoint} of this chain with respect to $\nu$ to be the upward Markov chain on ${\mathcal N} \cup \{\infty\}$ defined by setting
\begin{equation}\label{adjoint-def}
 P(n \nearrow m) \coloneq \frac{\nu(m)}{\nu(n)} P(m \searrow n)
\end{equation}
if $n,m \in {\mathcal N}$ with $m \neq n$, with $P(n \nearrow n) = 0$,
\begin{equation}\label{infty-transition}
P(n \nearrow \infty) \coloneq 1 - \sum_{m \neq n} \frac{\nu(m)}{\nu(n)} P(m \searrow n),
\end{equation}
$P(\infty \nearrow \infty) = 1$, and $P(\infty \nearrow m)=0$ for all $m \in {\mathcal N}$.  One easily verifies that this is indeed an upward Markov chain (with the non-negativity of $P(n \nearrow \infty)$ following from \eqref{sub}).  From \eqref{searrow} one has the identity
\begin{equation}\label{nu-recurse}
 \nu(n) = \sum_{q|n; q>1} \nu\left(\frac{n}{q} \right) P\left(\frac{n}{q} \nearrow n\right) 
\end{equation}
for all $n \in {\mathcal N} \backslash {\mathcal A}$. 
\end{example}

\begin{figure}
  \centering
  \includegraphics[width=0.75\textwidth]{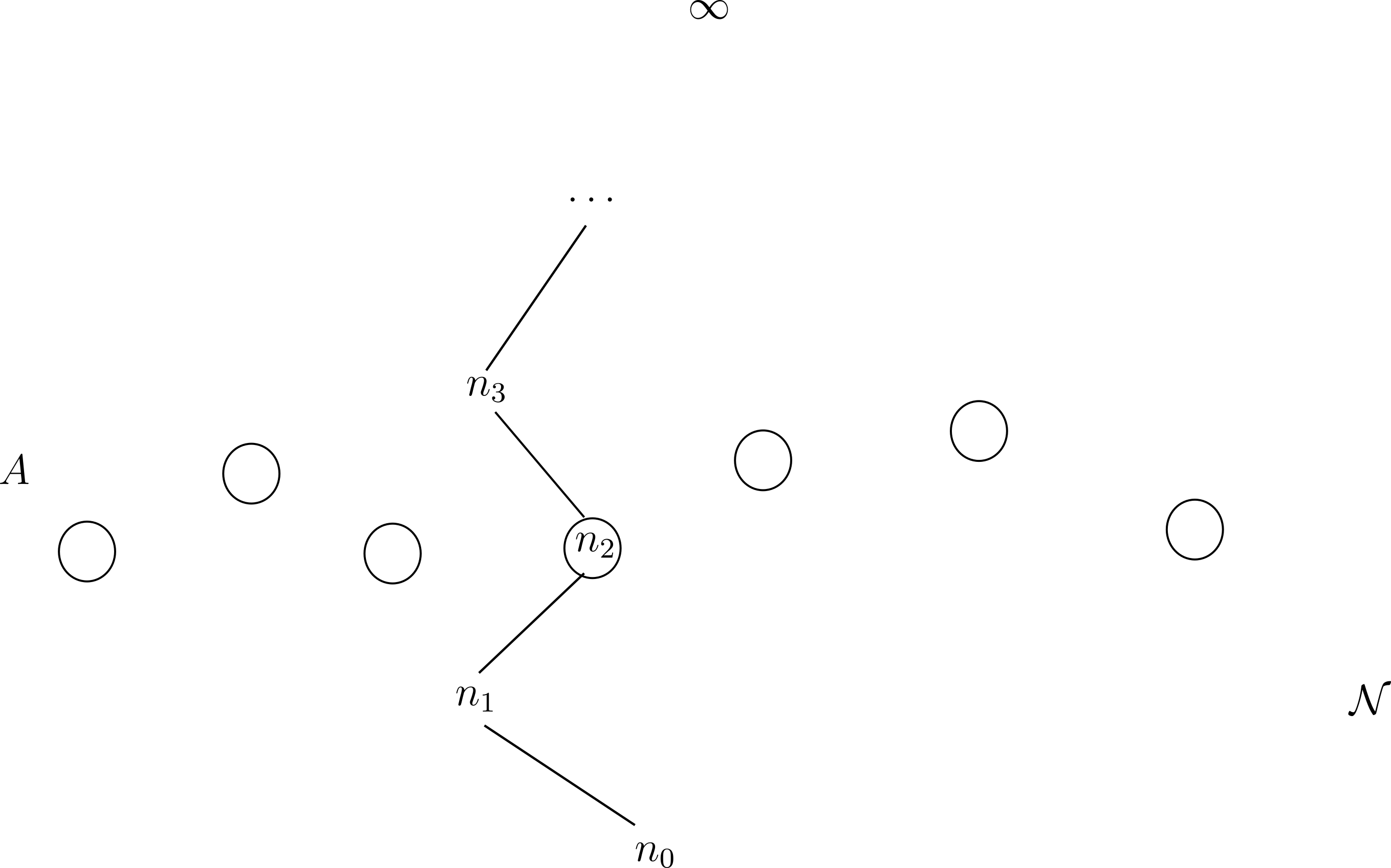}
  \caption{An upward divisibility chain, which either reaches the absorbing state $\infty$ in finite time, or is an infinite strictly increasing chain of natural numbers. Any primitive set $A$ will meet such a chain at most once, although the chain could conceivably ``jump over'' such a set.}
  \label{fig-up}
\end{figure}

Suppose we have an upward Markov chain on ${\mathcal N} \cup \{\infty\}$.  Then every $n_0 \in {\mathcal N}$ generates a random upward divisibility chain \eqref{div-up}
where each $n_{i+1}$ is obtained from $n_i$ by a Markov process with transition probability $P(n_i \nearrow n_{i+1})$, and we adopt the convention that $n | \infty$ for all $n \in \N \cup \{\infty\}$.  Note that such chains will strictly increase unless they reach $\infty$, at which point they stay constant.
We let $\mathbb{P}_{n_0\nearrow}$ denote the probability measure associated to such an upward chain.  If $A$ is a primitive set contained in ${\mathcal N}$, then it can only meet such a chain at most once, so on taking probabilities, we obtain the analogues
\begin{equation}\label{psum-upper}
 \sum_{k=0}^\infty \mathbb{P}_{n_0\nearrow}( n_k \in A) \leq 1
\end{equation}
and
\begin{equation}\label{psum-weight-upper}
\sum_{n \in A} h_{b\nearrow}(n) \leq \sum_{n_0\in\mathcal N}b(n_0)
\end{equation}
of \eqref{psum}, \eqref{psum-weight} respectively for any $n_0 \in {\mathcal N}$ and $b \colon {\mathcal N} \to [0,+\infty)$, where
the \emph{upward hitting mass} $h_{b\nearrow}(n)$ is given by the formula 
$$ h_{b\nearrow}(n) \coloneq \sum_{n_0 \in {\mathcal N}} b(n_0) \sum_{k=0}^\infty \mathbb{P}_{n_0\nearrow}( n_k = n ).$$
Analogously to \eqref{h-recurse}, we have the recursive identity
\begin{equation}\label{h-recurse-upper}
 h_{b\nearrow}(n) = b(n) + \sum_{q > 1: n/q \in {\mathcal N}} h_{b\nearrow}\left(\frac{n}{q}\right) P\left(\frac{n}{q} \nearrow n\right)
\end{equation}
for any $n \in {\mathcal N}$.

\section{Preliminary estimates}

\subsection{Bounds involving the von Mangoldt function}

We will need to estimate several sums involving the von Mangoldt function $\Lambda$.  We recall Mertens' theorems:

\begin{theorem}[Mertens' theorems]\label{Mertens}\ 
\begin{itemize}
    \item[(i)]  We have $\sum_{p \leq x} \frac{\log p}{p}, \sum_{n \leq x} \frac{\Lambda(n)}{n} = \log x + O(1)$ for all $x \geq 1$.
    \item[(ii)]  We have $\sum_{p \leq x} \frac{1}{p} = \log\log x + O(1)$ for all $x \geq 2$.
    \item[(iii)]  We have $\prod_{p \leq x} (1 - \frac{1}{p}) =  \frac{1+o(1)}{e^\gamma \log x}$ as $x \to \infty$.
\end{itemize}
\end{theorem}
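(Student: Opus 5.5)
These are classical results, and I would not reprove them from scratch. The plan is to derive (i) by the standard Chebyshev--Legendre argument, (ii) from (i) by partial summation, and (iii) from (ii) together with a Dirichlet series computation that identifies the constant $e^\gamma$.

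For (i), I would start from the identity \eqref{vmi} summed over $n \leq x$:
$$\sum_{n\le x}\log n=\sum_{d\le x}\Lambda(d)\lfloor x/d\rfloor .$$
Stirling's formula gives $x\log x - x + O(\log x)$ for the left-hand side. On the right, I would write $\lfloor x/d\rfloor = x/d + O(1)$. The error term is then $O(\psi(x))$, where $\psi(x) = \sum_{d\le x}\Lambda(d)$, and the Chebyshev bound $\psi(x)\ll x$ makes it $O(x)$. That bound itself comes from $\binom{2m}{m} \le 4^m$ being divisible by every prime in $(m,2m]$, which gives $\vartheta(2m)-\vartheta(m)\le m\log 4$; iterating dyadically and adding the prime-power contribution $O(\sqrt{x}\log^2 x)$ finishes it. Dividing by $x$ then yields $\sum_{n\le x}\Lambda(n)/n=\log x+O(1)$. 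Passing to the sum over primes only costs
$$\sum_{p}\sum_{k\ge2}\frac{\log p}{p^k}\ll\sum_p\frac{\log p}{p^2}<\infty,$$
which is $O(1)$.

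For (ii), I would set $R(t)=\sum_{p\le t}\frac{\log p}{p}-\log t = O(1)$ and apply partial summation with weight $1/\log t$:
$$\sum_{p\le x}\frac1p=\frac{\log x+R(x)}{\log x}+\int_2^x\frac{\log t+R(t)}{t\log^2 t}\,dt .$$
The main part of the integral gives $\log\log x - \log\log 2$. The term involving $R$ converges absolutely, since $\int^\infty dt/(t\log^2 t)<\infty$. Everything else is bounded, which gives $\log\log x+O(1)$. The same computation in fact shows that $\sum_{p\le x}1/p=\log\log x+M+O(1/\log x)$ for some constant $M$; I will need this refinement for (iii).

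Part (iii) is the main obstacle, because it requires the precise constant $e^\gamma$. I would write
$$-\log\prod_{p\le x}\Bigl(1-\frac1p\Bigr)=\sum_{p\le x}\frac1p+\sum_{p\le x}\sum_{k\ge2}\frac1{kp^k}.$$
The second sum converges absolutely, with a tail of size $O(1/x)$. Combined with the refined form of (ii), this gives $\log\log x + C + o(1)$ for some constant $C$, and it remains to show $C=\gamma$. For this I would compare with $\log\zeta(s)=\sum_p\sum_k \frac{1}{kp^{ks}}$ as $s\to1^+$. On one side, $\log\zeta(s)=-\log(s-1)+o(1)$. On the other, partial summation with respect to $\sum_{p \le t} 1/p = \log\log t + M + E(t)$, where $E(t) = O(1/\log t)$, gives
$$\sum_p p^{-s} = \int_2^\infty t^{1-s}\, d(\log\log t + M + E(t)).$$
The main term is $\int_{\log 2}^\infty e^{-(s-1)u}\,\frac{du}{u}$, which equals $-\log(s-1)-\gamma-\log\log 2+o(1)$, using the standard evaluation of $\int_0^\infty e^{-u}\log u\,du=-\gamma$. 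The error term is controlled by integrating by parts against $E(t)$ and using $E(t) = O(1/\log t)$ with dominated convergence. Comparing the two expressions for $\log\zeta(s)$ forces $C=\gamma$, and exponentiating gives (iii).

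Alternatively, since these are textbook facts (see, e.g., standard references on analytic number theory), the paper can simply cite them. In that case the above serves only as a sketch of why they hold.
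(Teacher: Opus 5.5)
Your sketch is correct and is the standard textbook argument. The paper gives no proof of its own and simply cites Montgomery--Vaughan, Theorem 2.7, which follows essentially this route: Chebyshev--Legendre for (i), partial summation for (ii), and comparison with $\log\zeta(s)$ as $s\to1^+$ to identify $e^\gamma$ in (iii). The one thing to tidy in (iii) is the boundary term at $t=2$ in the Stieltjes integration. It contributes $2^{1-s}(M+\log\log 2)\to M+\log\log 2$, which cancels your $-\log\log 2$ and supplies the $M$ you need to conclude $C=\gamma$.
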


\begin{proof} See for instance \cite[Theorem 2.7]{montgomeryvaughan}.
\end{proof}

Next, we record a standard bound on the Dirichlet series \eqref{lam-s}:

\begin{lemma}[Upper bound on von Mangoldt Dirichlet series]\label{lem:phi}
For every $u>0$, we have
\begin{equation}\label{phi-ineq}
\sum_{q}\frac{\Lam(q)}{q^{1+u}}
=-\frac{\zeta'(1+u)}{\zeta(1+u)}\le \frac{\log 2}{2^u-1} \leq \frac{1}{u}.
\end{equation}
(See Figure \ref{fig-phi}.)
\end{lemma}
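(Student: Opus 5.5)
The plan is to prove the two inequalities in \eqref{phi-ineq} separately. The equality in \eqref{phi-ineq} is just \eqref{lam-s} at $s=1+u$. The last inequality is elementary: by convexity of the exponential, $2^u - 1 = e^{u\log 2}-1 \geq u\log 2$, so $\frac{\log 2}{2^u-1} \leq \frac{1}{u}$. Everything therefore rests on the middle inequality
$$ -\frac{\zeta'(s)}{\zeta(s)} \leq \frac{\log 2}{2^{s-1}-1}, \qquad s>1.$$

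For this I would pass to the alternating zeta function $\eta(s) \coloneq \sum_{n \geq 1} (-1)^{n-1} n^{-s} = (1-2^{1-s})\zeta(s)$. For $s>1$ both factors are positive, so we may take logarithms and differentiate. Using
$$\frac{d}{ds}\log(1-2^{1-s}) = \frac{2^{1-s}\log 2}{1-2^{1-s}} = \frac{\log 2}{2^{s-1}-1},$$
we get the exact identity
$$ -\frac{\zeta'(s)}{\zeta(s)} = \frac{\log 2}{2^{s-1}-1} - \frac{\eta'(s)}{\eta(s)}. $$
Since $\eta(s)>0$ (pair consecutive terms), the middle inequality is equivalent to $\eta'(s) \geq 0$ for all $s>1$, that is, to $\eta$ being non-decreasing on $(1,\infty)$.

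To show $\eta'(s) = \sum_{n\geq 2} (-1)^n \frac{\log n}{n^s} \geq 0$, write $g(x) \coloneq \frac{\log x}{x^s}$, which is decreasing for $x \geq e^{1/s}$, and in particular for $x\geq 3$ since $s>1$. Grouping the terms as
$$\eta'(s) = \bigl(g(2)-g(3)+g(4)-g(5)\bigr) + \sum_{m \geq 3} \bigl(g(2m)-g(2m+1)\bigr),$$
the tail is non-negative by monotonicity of $g$ on $[3,\infty)$. It remains to show
$$g(2)+g(4) \geq g(3)+g(5).$$
The naive pairing $g(2)\geq g(3)$ fails near $s=1$, because $(3/2)^s < \log 3/\log 2$ there, which is why four terms are grouped.

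I expect this finite inequality to be the main obstacle. Multiplying by $3^s$, it reads
$$ (3/2)^s\log 2 + (3/4)^s \log 4 \geq \log 3 + (3/5)^s\log 5. $$
At $s=1$ it holds with a little room: the left side is $\approx 1.0397 + 1.0397$ against $\approx 1.0986 + 0.9657$. As $s\to\infty$ the term $(3/2)^s\log 2$ dominates.

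For intermediate $s$, I would substitute $(3/4)^s\log 4 = 2(3/4)^s\log 2$ and note that $(3/4)^s \geq (3/5)^s$, so the inequality follows from
$$\bigl((3/2)^s + 2(3/4)^s\bigr)\log 2 \geq \log 3 + (3/5)^s(\log 5 - 2\log 2).$$
Here the right side is decreasing in $s$. The left side has positive derivative, $(3/2)^s\log(3/2) > 2(3/4)^s\log(4/3)$ for $s\geq 1$, so it is increasing. Hence the case $s=1$ suffices, and there one checks $2.0794 \geq 1.0986 + 0.6\cdot 0.2231$.

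If this bookkeeping turns out to be tight, the fallback is to use the known value $\eta'(1) = \gamma\log 2 - \tfrac12\log^2 2 > 0$ together with a convexity or monotonicity argument in $s$ for the grouped partial sum.
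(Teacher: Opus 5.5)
There is a genuine error in the last step, the one proving the finite inequality $g(2)+g(4)\ge g(3)+g(5)$. Your reduction to $\eta$ being non-decreasing on $(1,\infty)$ matches the paper, and the grouping of $\eta'(s)=\sum_{n\ge 2}(-1)^n g(n)$ with a non-negative tail $\sum_{m\ge 3}(g(2m)-g(2m+1))$ is fine.

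After multiplying by $3^s$ you need
\[
((3/2)^s+2(3/4)^s)\log 2 \;\ge\; \log 3+(3/5)^s\log 5 .
\]
The inequality you "reduce" to has the same left side but right side $\log 3+(3/5)^s(\log 5-2\log 2)$. That is \emph{smaller} than the true right side by $2(3/5)^s\log 2$, so it is weaker than the target and does not imply it. You have used the term $2(3/4)^s\log 2$ twice: once kept on the left, and once, via $(3/4)^s\ge(3/5)^s$, to cancel $2(3/5)^s\log 2$ on the right.

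The valid version of that reduction is $(3/2)^s\log 2\ge\log 3+(3/5)^s(\log 5-2\log 2)$, and it is false at $s=1$: $1.040$ versus $1.232$. The bound $(3/4)^s\ge(3/5)^s$ throws away about $0.21$ at $s=1$, while the true margin there is only about $0.015$. So your check $2.0794\ge 1.0986+0.6\cdot 0.2231$ says nothing about $g(2)+g(4)\ge g(3)+g(5)$.

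The repair uses only what you already have: drop the substitution. The original left side $((3/2)^s+2(3/4)^s)\log 2$ is increasing for $s\ge 1$ by exactly the derivative comparison $(3/2)^s\log(3/2)>2(3/4)^s\log(4/3)$ you wrote. The original right side $\log 3+(3/5)^s\log 5$ is decreasing. At $s=1$ the inequality reads $3\log 2\ge \log 3+\frac35\log 5$, i.e.\ $2^{15}=32768\ge 3^5\cdot 5^3=30375$; this is the comparison $2.0794\ge 2.0643$ you had already computed.

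With that fix, your argument is a correct, more elementary alternative to the paper's. The paper proves monotonicity of $\eta$ structurally: it writes $\eta(s)=\mathbb{E}\,h(X_s)$ with $h(x)=1/(1+e^{-x})$ increasing and $X_s$ a Gamma$(s)$ variable, and uses $X_t\stackrel{d}{=}X_s+Y_{t-s}$ for $t>s$. That needs no numerics and no special care near $s=1$. Your route avoids the integral representation but rests on a finite inequality with a thin margin at $s=1$, which is exactly where the slip occurred.
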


\begin{figure}
  \centering
  \includegraphics[width=0.75\textwidth]{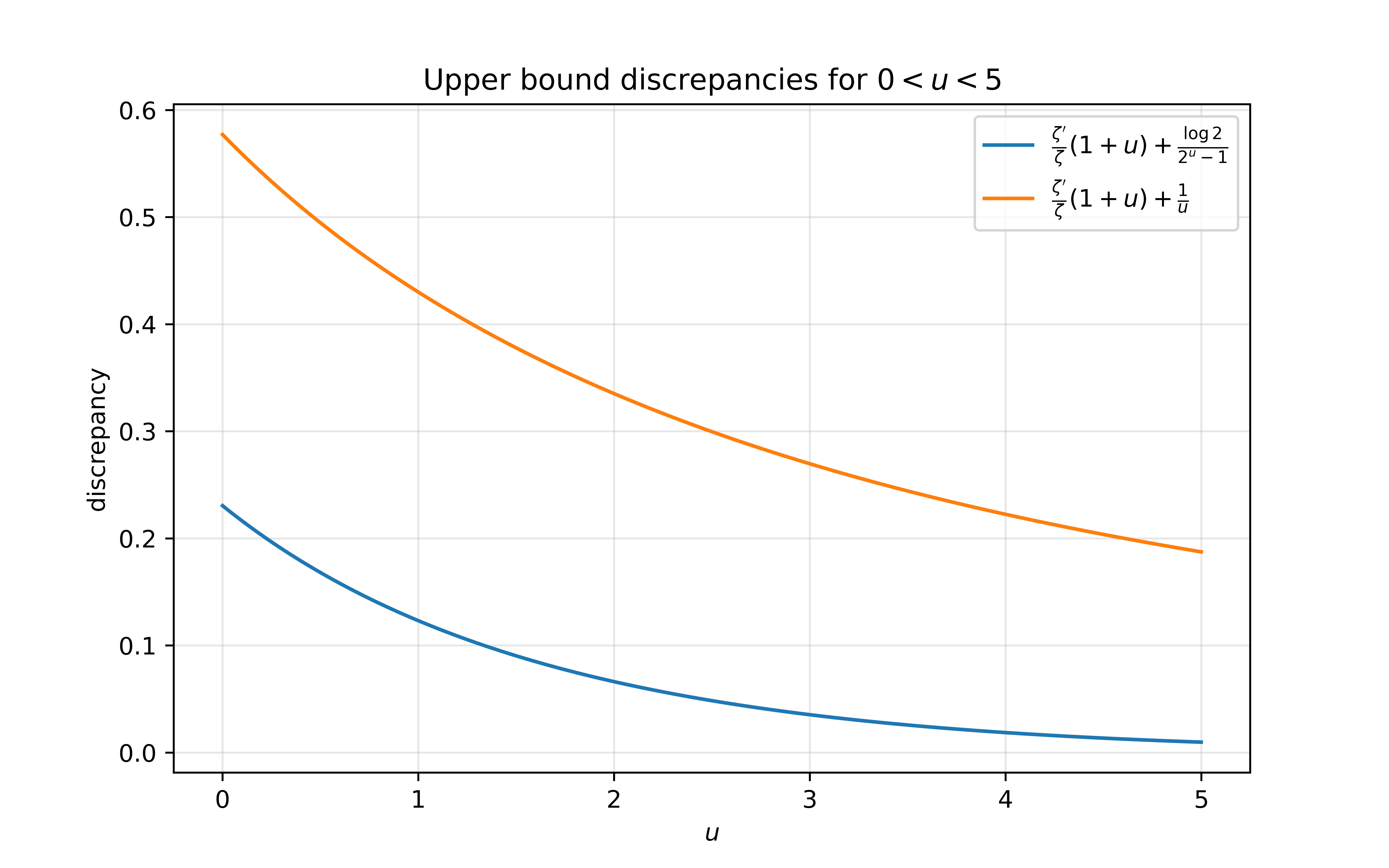}
  \caption{A plot of the discrepancy between $\frac{\log 2}{2^u-1}$ or $\frac{1}{u}$ and $-\frac{\zeta'}{\zeta}(1+u)$, for $0 < u < 5$.}
  \label{fig-phi}
\end{figure}

\begin{figure}
  \centering
  \includegraphics[width=0.75\textwidth]{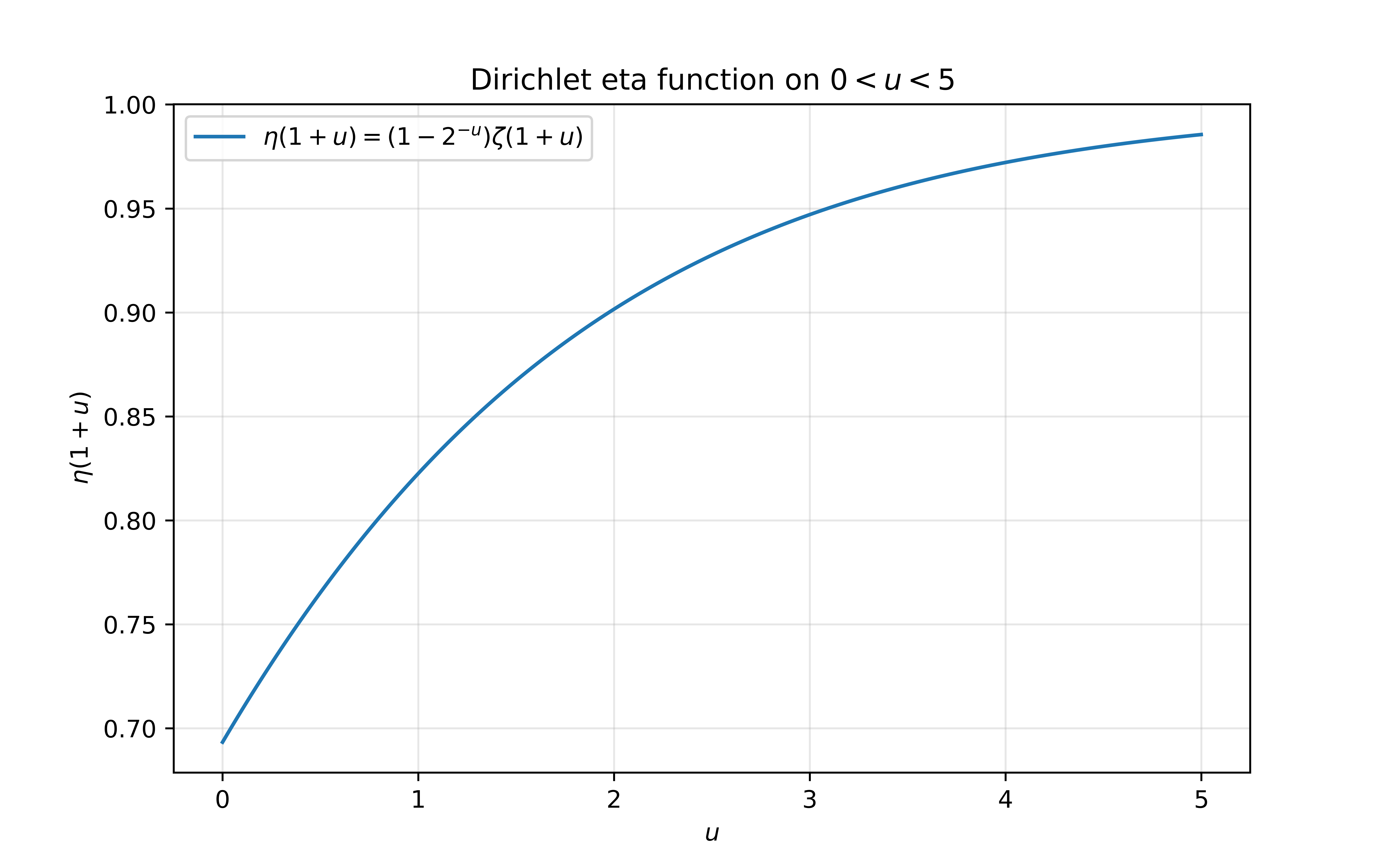}
  \caption{The Dirichlet eta function $\eta$ is strictly increasing, concave, and log-concave on the positive real axis.}
  \label{fig-eta}
\end{figure}

\begin{proof} The first identity is \eqref{lam-s}, while the final inequality follows from the calculation
$$ \frac{2^u-1}{\log 2} = u \int_0^1 2^{tu}\,\dd t \geq u$$
(one could also use the mean value theorem here if desired).
It remains to prove the middle inequality. Observe that
\begin{align*}
\frac{\zeta'(1+u)}{\zeta(1+u)}+\frac{\log 2}{2^u-1} = \frac{\eta'(1+u)}{\eta(1+u)}
\end{align*}
where $\eta$ is the Dirichlet eta function
\begin{equation}\label{etas}\eta(s)\coloneq (1-2^{1-s})\zeta(s).
\end{equation}
Thus, it will suffice to show that $\eta$ is non-decreasing for $s > 1$.  In fact, $\eta$ is known to be strictly increasing, concave, and log-concave for $s>0$ \cite{adell-lekuona}, \cite{alzer-kwong}, \cite{vdl}, \cite{wang}; see Figure \ref{fig-eta}.  For the convenience of the reader, we present here a short probabilistic proof of the non-decreasing nature of $\eta$ for $s>1$, following the arguments in \cite{adell-lekuona}.  We can write the well-known Mellin transform representation
\begin{align*}
\eta(s)= \frac{1}{\Gamma(s)} \int_0^\infty \frac{x^{s-1}}{e^x+1}\,\dd x.
\end{align*}
in probabilistic form as
\begin{align*}
\eta(s)=\E\,h(X_s)
\end{align*}
where $h(x)\coloneq 1/(1+e^{-x})$ and $X_s$ is a gamma random variable of shape $s$ and scale $1$, i.e., with density $\frac{x^{s-1}e^{-x}}{\Gamma(s)}$. If $t>s>1$, then $X_t$ has the same distribution as $X_s+Y_{t-s}$ for an independent gamma random variable $Y_{t-s}$ of shape $t-s$ and scale $1$.  As $h$ is increasing, we conclude that $\eta(t)\ge \eta(s)$, giving the claimed monotonicity. 
\end{proof}

We can control further sums of $\Lambda$ by means of the easy identities
\begin{equation}\label{log-1}
\frac{1}{\log a}=\int_0^\infty a^{-u}\,\dd u
\end{equation}
and
\begin{equation}\label{log-2}
\frac{1}{\log^2 a}=\int_0^\infty u a^{-u}\,\dd u
\end{equation}
for $a>1$.  The following lemma summarizes the relevant bounds.

\begin{lemma}\label{lem:mangoldt-tail-and-B}  Let $m \geq 1$.
\begin{itemize}
    \item[(i)] (Asymptotic estimate)  Uniformly for all $2 \leq y \leq z$, one has
\begin{equation}\label{asym-1}
 \sum_{y\le q\le z}\frac{\Lam(q)}{q\log^2(mq)} =
\frac1{\log(my)}-\frac1{\log(mz)}
+
O\!\left(\frac1{\log^2(my)}\right).
\end{equation}
In particular, on sending $z \to \infty$, one has
\begin{equation}\label{asym-2}
 \sum_{q \geq y} \frac{\Lam(q)}{q\log^2(mq)} =
\frac1{\log(my)}
+
O\!\left(\frac1{\log^2(my)}\right).
\end{equation}
    \item[(ii)] (Non-asymptotic estimate)  If $m = 2^x$ for some $x \geq 1$, then
\begin{equation}\label{sharp}
\log m \cdot \sum_q \frac{\Lam(q)}{q\log^2(mq)} \le
\sum_{j\ge 1}\frac{x}{(x+j)^2}
\le
\frac{x}{x+\tfrac12}
\le 1.
\end{equation}
See Figure \ref{fig-mangoldt}.
    \item[(iii)] (Shifted non-asymptotic estimate)  One has
\begin{equation}\label{sharp-2}
\sum_q \frac{\Lam(q)}{q\log(mq) \log(2mq)} \le \frac{1}{\log(2m)}.
\end{equation}
See Figure \ref{fig-mangoldt2}.
\end{itemize}
\end{lemma}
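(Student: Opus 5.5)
The common tool for all three parts is the identity \eqref{log-2}, which turns each sum into an integral of the Dirichlet series \eqref{lam-s}. Writing $\phi(u) \coloneq \sum_q \Lam(q) q^{-1-u} = -\frac{\zeta'}{\zeta}(1+u)$, we get
\begin{equation*}
\sum_q \frac{\Lam(q)}{q\log^2(mq)} = \int_0^\infty u\, m^{-u} \phi(u)\,\dd u,
\end{equation*}
and similarly with the sum restricted to $y \le q \le z$. Part (ii) then follows directly from Lemma \ref{lem:phi}. Part (i) needs a different route, since it is really a Mertens-type statement, and part (iii) is handled like (ii) after a partial-fractions step.

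For (ii), insert the bound $\phi(u) \le \frac{\log 2}{2^u-1} = \log 2\sum_{j\ge1} 2^{-ju}$ and put $m=2^x$. Then
\begin{equation*}
\log m \int_0^\infty u\, 2^{-xu}\phi(u)\,\dd u \le x\log^2 2\sum_{j\ge1}\int_0^\infty u\,2^{-(x+j)u}\,\dd u = \sum_{j\ge 1}\frac{x}{(x+j)^2}.
\end{equation*}
Next, $\frac{1}{(x+j)^2} \le \frac{1}{x+j-1/2}-\frac{1}{x+j+1/2}$, which is equivalent to $(x+j)^2 \ge (x+j)^2-\frac14$. Summing this telescopes to $\frac{1}{x+1/2}$, giving the bound $\frac{x}{x+1/2}\le 1$.

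For (iii), write
\begin{equation*}
\frac{1}{\log(mq)\log(2mq)} = \frac{1}{\log 2}\left(\frac{1}{\log(mq)}-\frac{1}{\log(2mq)}\right) = \frac{1}{\log 2}\int_0^\infty (mq)^{-u}(1-2^{-u})\,\dd u
\end{equation*}
using \eqref{log-1}. The sum then becomes $\frac{1}{\log 2}\int_0^\infty m^{-u}(1-2^{-u})\phi(u)\,\dd u$. With $\phi(u)\le \frac{\log 2}{2^u-1}$ we have $(1-2^{-u})\phi(u) \le \log 2 \cdot 2^{-u}$, so the whole expression is at most $\int_0^\infty (2m)^{-u}\,\dd u = \frac{1}{\log(2m)}$. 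This is exact, with no slack lost.

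For (i), use partial summation against Mertens' theorem (i) in the form $\sum_{q\le t}\Lam(q)/q = \log t + R(t)$ with $R(t)=O(1)$. Take $g(t) = 1/\log^2(mt)$. The main term is
\begin{equation*}
\int_y^z \frac{\dd t}{t\log^2(mt)} = \frac{1}{\log(my)}-\frac{1}{\log(mz)}.
\end{equation*}
The error terms are the boundary contributions $R(t)g(t)$ at $t=y,z$, each $O(1/\log^2(my))$, and the integral $\int_y^z |R(t)|\,|g'(t)|\,\dd t \ll \int_y^\infty \frac{\dd t}{t\log^3(mt)} \ll \frac{1}{\log^2(my)}$. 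Because $g$ is decreasing, the total variation of $g$ gives this bound cleanly, uniformly in $m\ge1$ and $y \ge 2$. Letting $z\to\infty$ gives \eqref{asym-2}.

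The main obstacle is bookkeeping rather than substance. In (i) the care is in keeping the error uniform in $m$, which works because every error term only involves $\log(my)\ge \log 2$. In (ii) and (iii) it is choosing the right partial-fraction or telescoping inequality so that Lemma \ref{lem:phi} gives sharp constants.
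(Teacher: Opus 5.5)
Your proposal is correct and follows the paper's proof essentially step for step. Part (i) is partial summation against Mertens' theorem with $g(t)=1/\log^2(mt)$; part (ii) combines the representation $\int_0^\infty u\,m^{-u}\,\dd u$ with Lemma~\ref{lem:phi} and the geometric series; part (iii) applies Lemma~\ref{lem:phi} to an integral representation. The only differences are cosmetic: your telescoping inequality $\frac{1}{(x+j)^2}\le\frac{1}{x+j-1/2}-\frac{1}{x+j+1/2}$ is exactly the paper's convexity bound $\frac{x}{(x+j)^2}\le\int_{j-1/2}^{j+1/2}\frac{x}{(x+t)^2}\,\dd t$ checked algebraically, and your partial-fraction step in (iii) reaches the same integrand $(mq)^{-u}(1-2^{-u})/\log 2$ that the paper obtains from a double integral and the substitution $r=u+v$.
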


\begin{figure}
  \centering
  \includegraphics[width=0.75\textwidth]{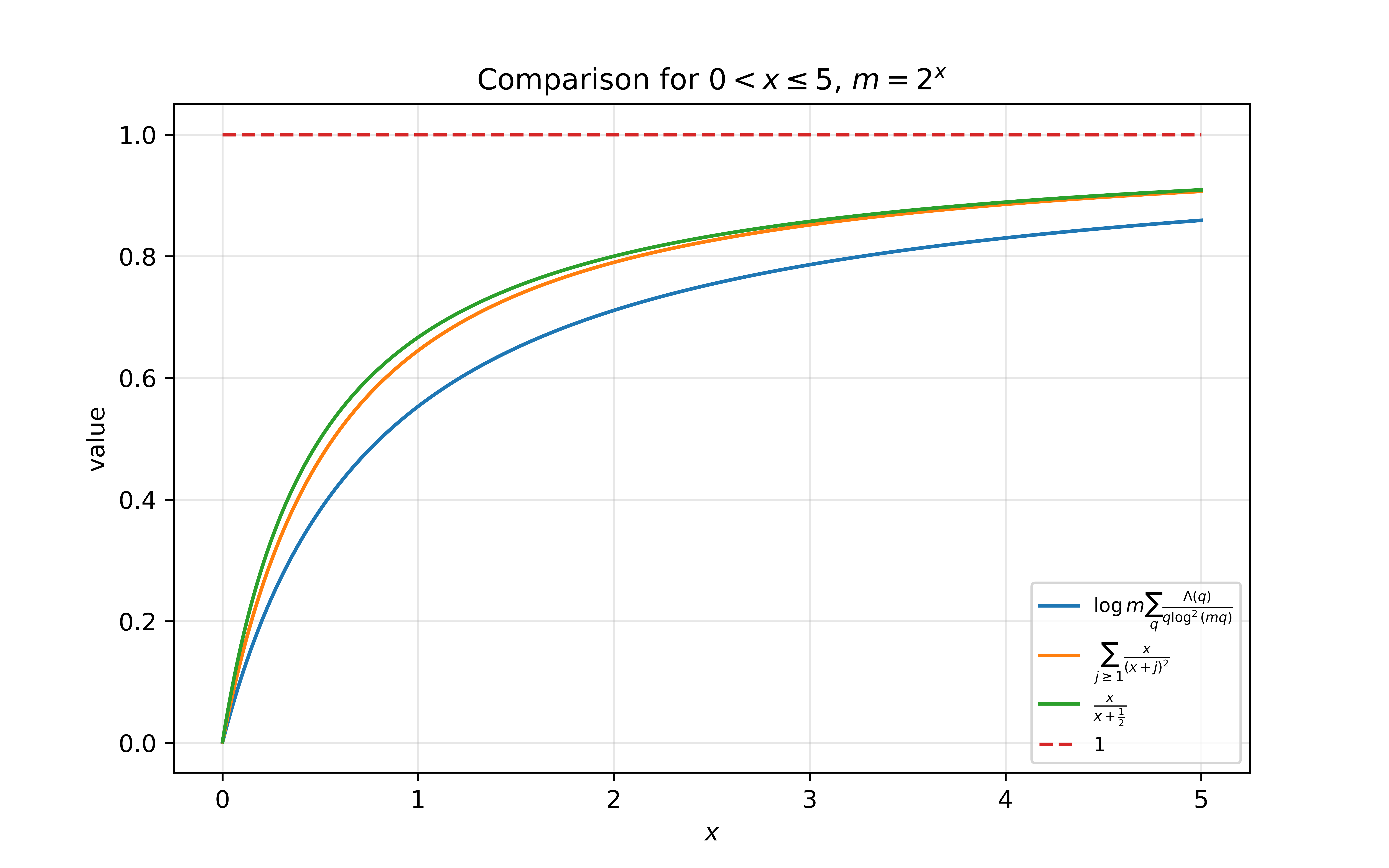}
  \caption{A comparison of the various expressions in \eqref{sharp}, for $0 < x < 5$.}
  \label{fig-mangoldt}
\end{figure}

\begin{figure}
  \centering
  \includegraphics[width=0.75\textwidth]{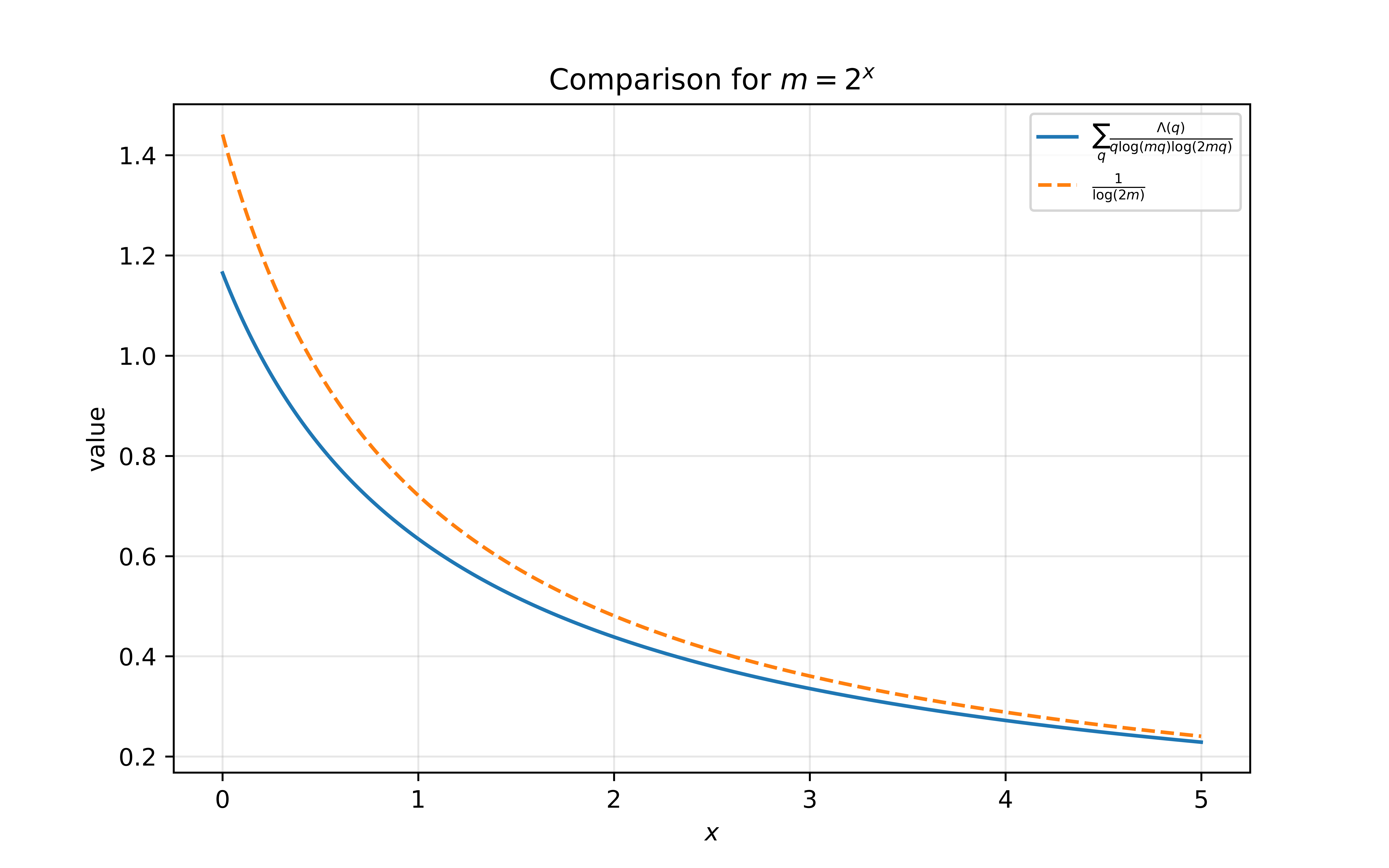}
  \caption{A comparison of the left- and right-hand sides of \eqref{sharp-2}, for $m=2^x$ and $0 < x < 5$.}
  \label{fig-mangoldt2}
\end{figure}

\begin{proof}
We first prove (i). Put $A(t)\coloneq \sum_{q\le t}\frac{\Lam(q)}{q}$ and $g(t)\coloneq \frac1{\log^2(mt)}$. By Theorem \ref{Mertens}(i), we have $A(t)=\log t+O(1)$. Partial summation followed by integration by parts then gives
\begin{align*}
 \sum_{y\le q\le z}\frac{\Lam(q)}{q\log^2(mq)}
&=
A(z)g(z)-A(y^-)g(y)-\int_y^z A(t)g'(t)\,\dd t \\
&=
\log z\,g(z)-\log y\,g(y)
-\int_y^z \log t\,g'(t)\,\dd t
+
O\!\left(g(y)+g(z)+\int_y^z |g'(t)|\,\dd t\right) \\
&= \int_y^z \frac{g(t)}{t} + O\!\left(g(y)+g(z)+\int_y^z |g'(t)|\,\dd t\right).
\end{align*}
Because $g$ is decreasing, $g(z)+\int_y^z|g'(t)|\,\dd t\ll g(y)$, and hence the error term is $O(1/\log^2(my))$. Direct calculation gives $$\int_y^z \frac{g(t)}{t}\,\dd t=\int_y^z \frac{\dd t}{t\log^2(mt)}=\frac1{\log(my)}-\frac1{\log(mz)}$$ which yields \eqref{asym-1}.  The claim \eqref{asym-2} then follows by sending $z$ to infinity.

Now we prove (ii).  Using Lemma~\ref{lem:phi}, \eqref{log-2}, the geometric series formula $\frac{1}{2^u-1} = \sum_{j \geq 1} 2^{-ju}$ and two applications of Tonelli's theorem, we have
\begin{align*}
\log m \cdot \sum_q \frac{\Lam(q)}{q\log^2(mq)} 
&\le
\log m\int_0^\infty u\,m^{-u}\sum_{q}\frac{\Lam(q)}{q^{1+u}}\,\dd u
\le
(\log 2)\log m\int_0^\infty \frac{u\,m^{-u}}{2^u-1}\,\dd u \\
&\le
(\log 2)\log m
\sum_{j\ge 1}\int_0^\infty u\,(m2^j)^{-u}\,\dd u 
=
(\log 2)\log m\sum_{j\ge 1}\frac{1}{\log^2(m2^j)} 
=
\sum_{j\ge 1}\frac{x}{(x+j)^2}.
\end{align*}
From the convexity of $t \mapsto \frac{x}{(x+t)^2}$ for $t \geq 0$ one has the inequality
$$ \frac{x}{(x+j)^2} \leq \int_{j-\frac12}^{j+\frac12} \frac{x}{(x+t)^2}\,\dd t$$ 
and hence $$\sum_{j\ge 1}\frac{x}{(x+j)^2} \leq \int_{1/2}^\infty \frac{x}{(x+t)^2}\,\dd t = \frac{x}{x+\tfrac12}.$$ This gives (ii).

Finally, we prove (iii).  From \eqref{log-1} we have
$$ \sum_q \frac{\Lam(q)}{q\log(mq) \log(2mq)} = \int_0^\infty \int_0^\infty \sum_q \frac{\Lam(q)}{q^{1+u+v} m^{u+v} 2^v}\dd{u}\dd v;$$
substituting $r=u+v$, evaluating the $v$ integral, and applying Lemma~\ref{lem:phi}, we conclude that
$$ \sum_q \frac{\Lam(q)}{q\log(mq) \log(2mq)} \leq \int_0^\infty \frac{\log 2}{2^r-1} \frac{1}{m^r} \frac{1-2^{-r}}{\log 2}\,\dd r.$$
By \eqref{log-1} the right-hand side is $\frac{1}{\log(2m)}$, giving the claim.
\end{proof}

\subsection{A further sum over primes}

The following estimate will be used to establish the odd Banks--Martin conjecture (Theorem \ref{conj:oddBM}).

\begin{lemma} \label{lem:oddzeta}
    For every $u > 0$,
\begin{equation}\label{om2}
u \sum_{p \geq 3} \frac{\log p}{(p-2) p^u} \leq 1.
\end{equation}
(See Figure \ref{fig-primesum2}.)
\end{lemma}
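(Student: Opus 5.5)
The plan is to reduce \eqref{om2} to Lemma~\ref{lem:phi} plus an explicit, rapidly convergent correction, and then verify a one-variable inequality. Expanding $\frac{1}{p-2}=\frac{1}{p-1}+\frac{1}{(p-1)(p-2)}$ and $\frac{1}{p-1}=\sum_{k\ge1}p^{-k}$ gives
\begin{equation*}
\sum_{p\ge3}\frac{\log p}{(p-2)p^u}=\sum_{p\ge3}\sum_{k\ge1}\frac{\log p}{p^{k+u}}+\sum_{p\ge3}\frac{\log p}{(p-1)(p-2)p^u}.
\end{equation*}
The $k=1$ part of the first double sum is $\sum_{p\ge3}\frac{\log p}{p^{1+u}}$. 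This is at most $-\frac{\zeta'}{\zeta}(1+u)$ minus the full $p=2$ contribution $\sum_{k\ge1}\frac{\log 2}{2^{k(1+u)}}=\frac{\log 2}{2^{1+u}-1}$, dropping the (nonnegative) odd prime powers. By Lemma~\ref{lem:phi} it is therefore at most
\begin{equation*}
\frac{\log 2}{2^u-1}-\frac{\log 2}{2^{1+u}-1}.
\end{equation*}
The $k\ge2$ terms together with the second sum form a remainder
\begin{equation*}
R(u)\coloneq\sum_{p\ge3}\log p\Bigl(\frac{1}{p(p-1)}+\frac{1}{(p-1)(p-2)}\Bigr)p^{-u},
\end{equation*}
which is a convergent Dirichlet series in $p^{-u}$ with nonnegative coefficients, nonincreasing in $u$, and dominated by its $p=3$ term $\approx\frac{2\log 3}{3}\,3^{-u}$ when $u$ is large.

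It then suffices to show that, for all $u>0$,
\begin{equation*}
F(u)\coloneq u\Bigl(\frac{\log2}{2^u-1}-\frac{\log2}{2^{1+u}-1}\Bigr)+uR(u)\le 1 .
\end{equation*}
As $u\to0^+$, the first term is $1-\frac{u\log 2}{2}+O(u^2)-u\log 2+\dots$, so it equals $1-cu+O(u^2)$ with $c=\frac{3\log 2}{2}\approx1.04$. The term $uR(u)$ is $uR(0)+O(u^2)$, and a numerical evaluation gives $R(0)\approx 0.9$–$1.0$ (the $p=3$ term alone is $\frac{2\log3}{3}\approx0.73$). So the margin near $0$ is small but positive, and a quantitative bound on $R(0)$ is needed. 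I will bound $R(0)$ by computing the terms for $p$ up to some cutoff (say $p\le 100$) exactly, and bounding the tail by $\sum_{n>100}\frac{2\log n}{(n-1)(n-2)}$ via an integral comparison.

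For the range $u\ge u_0$ (with $u_0$ around $1$ or $2$), I will use crude monotone bounds. Here $u\frac{\log2}{2^u-1}$ is decreasing and small, and $uR(u)\le uR(0)\,3^{-u}\cdot 3^{0}$ (factoring out the worst decay rate $3^{-u}$, valid since every $p\ge3$). Both pieces tend to $0$, so $F(u)<1$ follows from an easy explicit estimate. For the intermediate compact range $[\epsilon,u_0]$, I will cover the interval by finitely many subintervals $[a,b]$ and bound $F$ on each. Since the first bracket is decreasing in $u$ and $R$ is decreasing, $F(u)\le b\bigl(\tfrac{\log2}{2^a-1}-\tfrac{\log2}{2^{1+b}-1}\bigr)+bR(a)$ on $[a,b]$, and a finite check suffices.

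\textbf{Main obstacle.} The hard part is the regime near $u=0$, where the inequality is tight to first order: the slack is $(c-R(0))u$ with $c-R(0)$ only of size $0.05$–$0.1$. I will try a Taylor expansion with an explicit second-order error bound on $[0,\epsilon]$, using $\frac{u\log2}{2^u-1}\le1-\frac{u\log2}{2}+\frac{(u\log2)^2}{12}$ and $R(u)\le R(0)$. If the constant $R(0)$ turns out too large for this crude split, the fallback is to refine it. In that case I would keep the odd prime powers in $-\zeta'/\zeta$ rather than discarding them, i.e.\ subtract $\sum_{p\ge3,k\ge2}\log p\,p^{-k(1+u)}$, which cancels most of the $k\ge2$ part of $R$ at small $u$.
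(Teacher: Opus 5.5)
Your primary reduction fails: the inequality $F(u)\le 1$ is false for small $u>0$, so no Taylor or interval check can complete it. Since $\frac{1}{p(p-1)}+\frac{1}{(p-1)(p-2)}=\frac{2}{p(p-2)}$, your remainder is $R(u)=\sum_{p\ge 3}\frac{2\log p}{p(p-2)}\,p^{-u}$. Its value at $0$ is about $1.27$, not $0.9$--$1.0$: the terms $p=3,5,7$ alone give $0.732+0.215+0.111\approx 1.058$, which already exceeds $c=\tfrac{3}{2}\log 2\approx 1.040$. Thus $F(u)=1+(R(0)-c)u+O(u^2)$ with $R(0)-c\approx 0.23>0$. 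Concretely, $F(0.1)\approx 1.01$, and the primes $p\le 11$ alone push it past $1$. The cause is discarding the odd prime powers from $-\frac{\zeta'}{\zeta}(1+u)$, which costs $\sum_{p\ge 3}\frac{\log p}{p(p-1)}\approx 0.41$ at first order. The original left-hand side is only $1-0.41u+O(u^2)$. Lemma~\ref{lem:phi} already spends $\gamma-\tfrac{1}{2}\log 2\approx 0.23$ of that slack, leaving about $0.18$, which your extra loss of $0.41$ overwhelms.

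So the fallback in your last sentence is mandatory, and once you adopt it your argument becomes the paper's. You compare $\frac{\log p}{(p-2)p^u}$ with the full contribution $\frac{\log p}{p^{1+u}-1}$ of each odd prime to $-\frac{\zeta'}{\zeta}(1+u)$. The new remainder is $\sum_{p\ge3}\bigl(\frac{\log p}{(p-2)p^u}-\frac{\log p}{p^{1+u}-1}\bigr)=\sum_{p\ge 3}\log p\sum_{k\ge 2}p^{-k}\bigl(2^{k-1}p^{-u}-p^{-ku}\bigr)$. It is termwise nonnegative and nonincreasing in $u$, and at $u=0$ it equals $\sum_{p\ge3}\frac{\log p}{(p-1)(p-2)}\approx 0.86$, so the margin near $0$ is about $0.18u$. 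From there the paper proceeds in four steps. It restricts to $u\le 1/\log 3$, since each $u\mapsto up^{-u}$ is decreasing beyond that point; this is cleaner than your separate large-$u$ estimate. It bounds the $p>7$ part of the remainder by its $u=0$ value $C\approx 0.1111$. It lower-bounds $\frac{1}{u}-\frac{2^u\log 2}{(2^u-1)(2^{1+u}-1)}$ via convexity of $2^u$. Finally it verifies the resulting one-variable inequality by interval arithmetic. Your explicit Taylor bound near $0$ plus a finite subinterval check would be an acceptable substitute for that last step, but only with the corrected remainder.
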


\begin{proof}  The functions $u \mapsto \frac{u}{p^u}$ are decreasing for $u > \frac{1}{\log 3}$ and $p \geq 3$, so we may assume without loss of generality that $0 < u \leq \frac{1}{\log 3} = 0.9102\dots$.  From \eqref{phi-ineq} and the geometric series formula we have
$$ \sum_p \frac{\log p}{p^{1+u}-1} \leq \frac{\log 2}{2^u-1}$$
and hence
\begin{align*}
\sum_{p \geq 3} \frac{\log p}{(p-2) p^u} &\leq \frac{\log 2}{2^u-1} - \frac{\log 2}{2^{1+u}-1} + \sum_{p \geq 3} \frac{\log p}{(p-2) p^u} - \frac{\log p}{p^{1+u}-1} \\
    &= \frac{2^u \log 2}{(2^u-1) (2^{1+u}-1)} + \sum_{p \geq 3} \frac{(2p^u - 1)\log p }{(p-2)p^u (p^{1+u}-1)}.
\end{align*}
The function $x \mapsto \frac{2x-1}{x(px-1)}$ has a derivative of $-\frac{2xp(x-1)+1}{x^2 (px-1)^2}$, which is negative for $p \geq 3$ and $x \geq 1$.  Thus the summand $\frac{ (2p^u - 1)\log p }{(p-2)p^u (p^{1+u}-1)}$ is decreasing in $u$, so that
$$ \frac{(2p^u - 1)\log p }{(p-2)p^u (p^{1+u}-1)} \leq \frac{\log p}{(p-1) (p-2)}.$$
Thus, the desired bound will follow if we have
$$
\sum_{3 \leq p \leq 7} \frac{\log p (2p^u - 1)}{(p-2)p^u (p^{1+u}-1)} + C \leq \frac{1}{u} - \frac{2^u \log 2}{(2^u-1) (2^{1+u}-1)}
$$
for $0 < u < \frac{1}{\log 3}$, where
$$ C \coloneq \sum_{p > 7} \frac{\log p}{(p-1) (p-2)} = 0.11110\dots.$$
We can use the convexity of $2^u$ to bound
\begin{equation}\label{spot}
\frac{2^u - 1}{u \log 2} = \int_0^1 2^{tu}\ dt \geq 2^{u/2}
\end{equation}
and hence
$$ \frac{1}{u} - \frac{2^u \log 2}{(2^u-1) (2^{1+u}-1)} \geq \frac{1}{u} - \frac{2^{u/2}}{u(2^{1+u}-1)}
= \frac{(2^{u/2}-1)2^{u/2} + 2^u -1}{u(2^{1+u}-1)}.$$
Applying  \eqref{spot} two further times, we conclude that
$$ \frac{1}{u} - \frac{2^u \log 2}{(2^u-1) (2^{1+u}-1)} \geq \frac{\frac{1}{2} 2^{3u/4} + 2^{u/2}}{2^{1+u}-1} \log 2$$
and so we reduce to showing that
\begin{equation}\label{analytic}
\sum_{3 \leq p \leq 7} \frac{\log p (2p^u - 1)}{(p-2)p^u (p^{1+u}-1)} + C \leq 
\frac{\frac{1}{2} 2^{3u/4} + 2^{u/2}}{2^{1+u}-1} \log 2
\end{equation}
for $0 < u < \frac{1}{\log 3}$.  This can be verified routinely by interval arithmetic; see Figure \ref{fig-primesum3}.
\end{proof}


\begin{figure}
  \centering
  \includegraphics[width=0.75\textwidth]{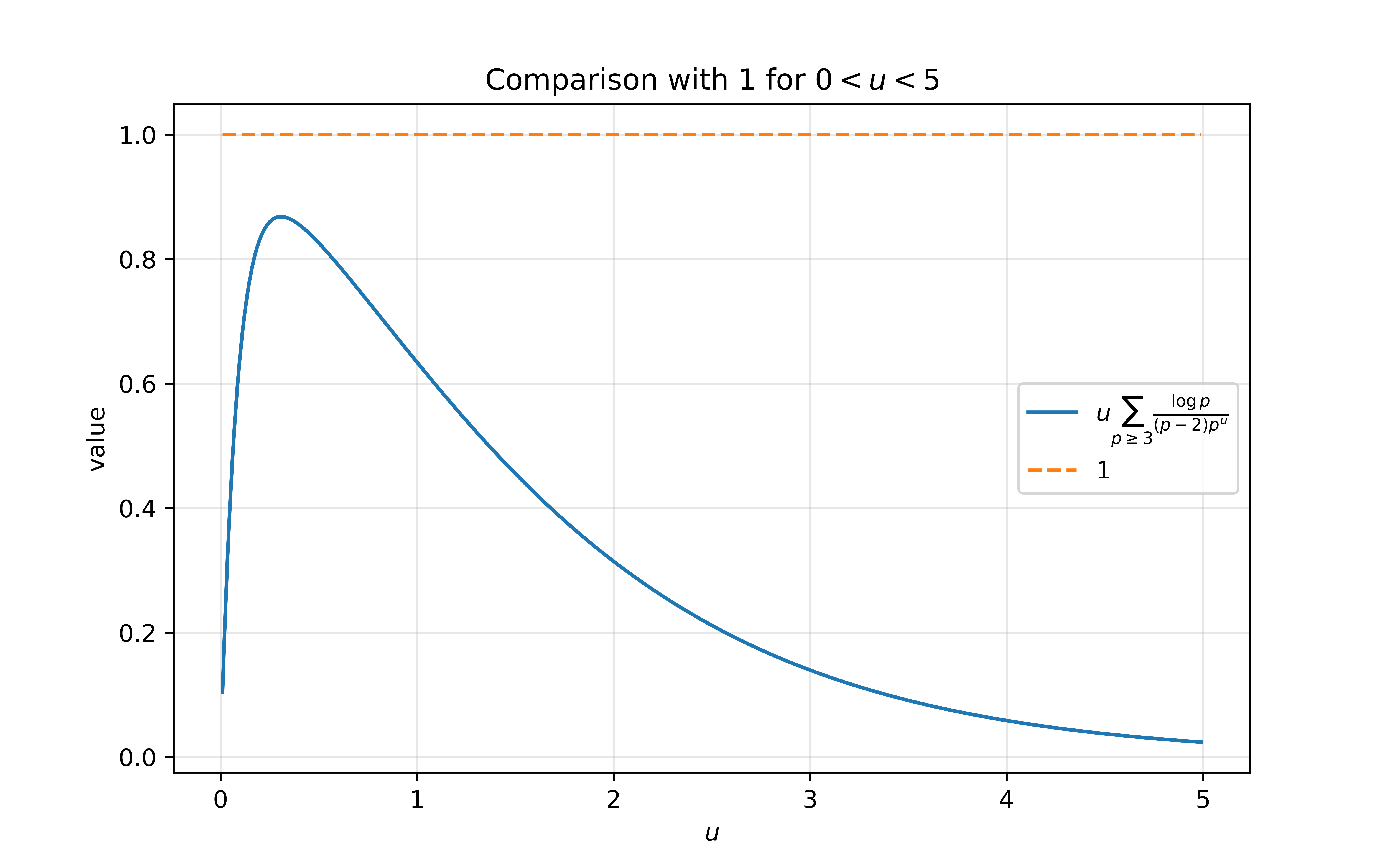}
  \caption{A comparison of the left and right-hand sides of \eqref{om2}.}
  \label{fig-primesum2}
\end{figure}

\begin{figure}
  \centering
  \includegraphics[width=0.75\textwidth]{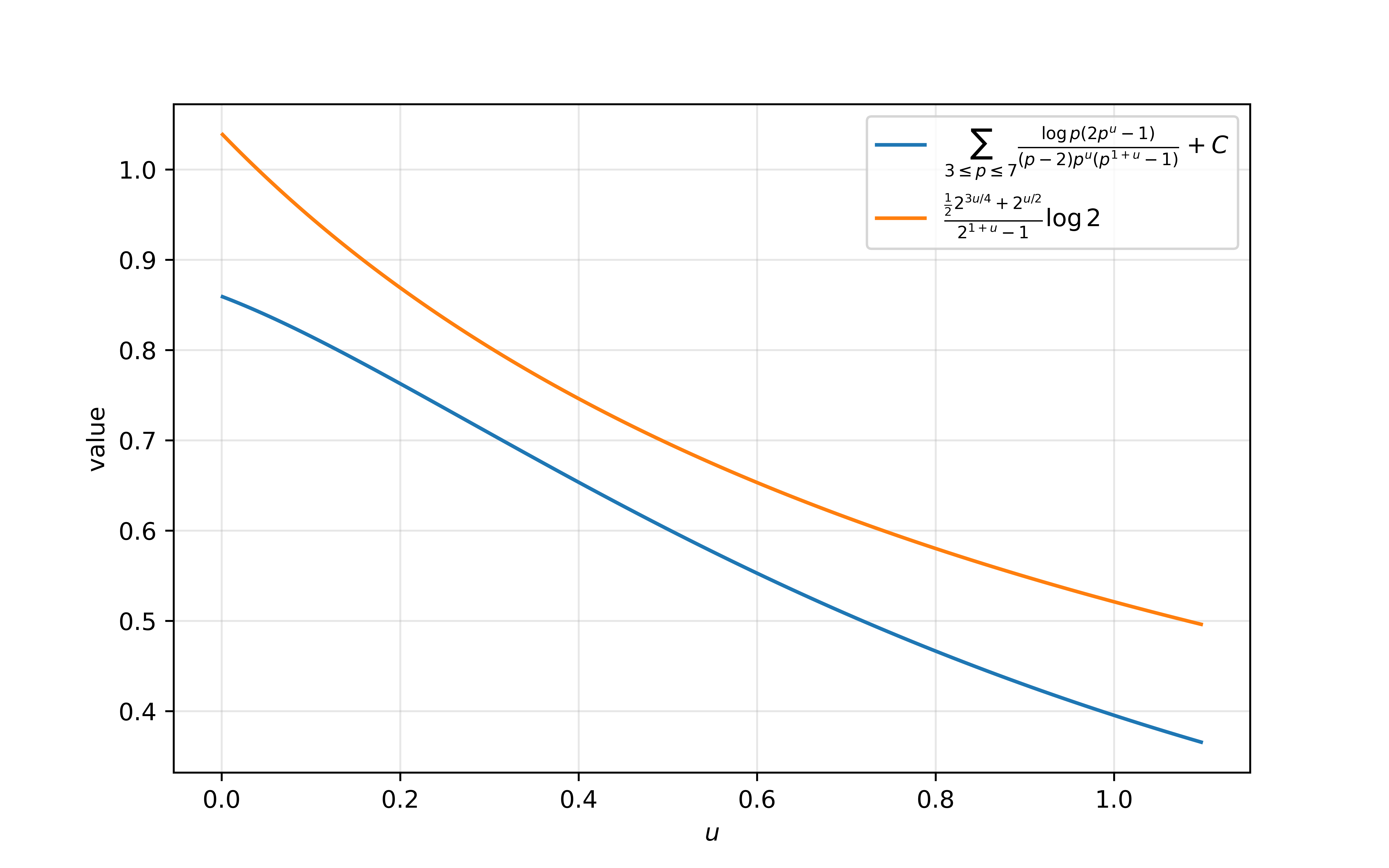}
  \caption{A comparison of the left and right-hand sides of \eqref{analytic}.}
  \label{fig-primesum3}
\end{figure}

\section{Primitive sets of large numbers (\Erdos{} \#1196)}\label{1196-sec} 

In this section, we establish Theorem~\ref{conj:1196}.  By a limiting argument, we may assume without loss of generality that $A \subset [x,X]$ for some finite $X$.

We use the von Mangoldt downward chain on $\N$ with absorbing state $\{1\}$ (Example \ref{vmdc}).  We define the initial mass $b \colon \N \to [0,+\infty)$ by setting
\begin{align}
b(n) \coloneq \nu_0(n) - \sum_{2\le q\le X/n} \nu_0(nq) P(nq \searrow n) 
\label{eq:bX-def}
\end{align}
for $x \leq n \leq X$, and $b(n)=0$ otherwise.  Observe from Lemma~\ref{lem:mangoldt-tail-and-B}(ii), \eqref{vmdc-def}, and \eqref{nu-def} that $b$ is always non-negative.  

Since $b$ is supported on $[x,X]$, all downward chains originating from an $n$ in the support of $b$ are contained in $[1,X]$.  In particular, $h_{b\searrow}(n)=0$ for $n>X$.  A routine downward induction using \eqref{eq:bX-def} and \eqref{h-recurse} then shows that $h_{b\searrow}(n) = \nu_0(n)$ for all $x \leq n \leq X$.  Applying \eqref{psum-weight} and \eqref{eq:bX-def}, we conclude that
$$ f(A) \leq \sum_{x \leq n \leq X} \nu_0(n) - \sum_{x \leq n \leq X} \sum_{2 \leq q \leq X/n} \nu_0(nq) P(nq \searrow n).$$
From \eqref{vmi}, \eqref{nu-def} (and relabeling $n$ as $r$)  we have
$$ \sum_{x \leq n \leq X} \nu_0(n) = \sum_{x \leq r \leq X} \frac{1}{r \log^2 r} \sum_{q|r} \Lambda(q)$$
while from \eqref{vmdc-def} (and relabeling $nq$ as $r$) one has
$$\sum_{x \leq n \leq X} \sum_{2 \leq q \leq X/n} \nu_0(nq) P(nq \searrow n)
= \sum_{x \leq r \leq X} \frac{1}{r \log^2 r} \sum_{\substack{q\mid r\\ r/q\ge x}}\Lam(q).$$
We conclude that
\begin{equation}\label{fA-bound}
f(A) \leq \sum_{x\le r\le X}\frac1{r\log^2 r}\sum_{\substack{q\mid r\\ r/q<x}}\Lam(q).
\end{equation}
Discarding the upper bound $r \leq X$ and making the change of variables $n=r/q$, we obtain
\begin{align*}
f(A) \leq \sum_{n<x}\frac1n \sum_{q \geq \max(2,x/n)}\frac{\Lam(q)}{q\log^2(nq)}.
\end{align*}
Since $n \max(2,x/n) \geq x$, Lemma~\ref{lem:mangoldt-tail-and-B}(i) gives
\begin{align*}
\sum_{q \geq \max(2,x/n)}\frac{\Lam(q)}{q\log^2(nq)}
&\le \frac1{\log x} + O\!\left(\frac1{\log^2 x}\right) 
\end{align*}
and thus
\begin{equation}\label{filip}
\sum_{x\le r\le X}\frac1{r\log^2 r}\sum_{\substack{q\mid r\\ r/q<x}}\Lam(q) \le \sum_{n<x}\frac1n\left(\frac1{\log x} + O\!\left(\frac1{\log^2 x}\right)\right) = 1 + O\!\left(\frac1{\log x}\right)
\end{equation}
and the claim then follows from \eqref{fA-bound}.

\begin{remark} Nat Sothanaphan, using\footnote{\url{https://drive.google.com/file/d/1Hb_0TDRTSwJpwifYVcSgkGtQEJdL5bOV/view}} GPT, has analyzed a variant of this argument to obtain the sharper bound
$$ f(A) \leq 1 + \frac{\gamma}{\log x} + O\!\left(\frac1{\log^2 x}\right).$$
In view of \eqref{nk} it is natural to conjecture that the error terms can be improved further, possibly even to one which is polynomial in $k$, although this may require the assumption of additional conjectures such as the Riemann hypothesis to prove. The von Mangoldt chain may not be the ideal tool for achieving such an improvement, due to its ability to jump over primitive sets as discussed in Example \ref{chain-ex}.
\end{remark}

\begin{remark}\label{1196-alt-rem} An alternate Markov chain proof of Theorem~\ref{conj:1196} (but with the constant $\gamma$ weakened to $2\gamma$) can be given as follows.  By taking the adjoint of the downward von Mangoldt chain from Example \ref{vmdc} with respect to the von Mangoldt weight $\nuMangoldt$ from Example \ref{invariants} one obtains an ``upward von Mangoldt chain'' $P(n \nearrow m)$ on $\N \cup \{\infty\}$ (actually one can delete the absorbing state $\infty$ if desired as it is almost surely never reached from a finite state).  Let $b \colon \N \cup \{\infty\} \to [0,+\infty)$ be the weight $b(n) \coloneqq 1_{n=1}$, then an easy induction using \eqref{nu-recurse}, \eqref{h-recurse-upper}, and $\nuMangoldt(1)=1$ gives that $h_{b\nearrow}(n) = \nuMangoldt(n)$ for all $n \in \N$.  Applying \eqref{psum-weight-upper} one obtains the inequality
$$ \sum_{n \in A} \nuMangoldt(n) \leq 1$$
for all primitive $A \subset \N$; specializing to the case $A \subset [x,\infty)$ and using \eqref{num-asym} one obtains the upper bound
$$ f(A) \leq 1 + \frac{2\gamma}{\log x} + O\!\left(\frac1{\log^2 x}\right),$$
which yields Theorem~\ref{conj:1196}.  We remark that if we replace the von Mangoldt chains and weights with their Mertens counterparts and apply Theorem~\ref{Mertens}(iii), we recover the weaker bound
\begin{equation}\label{latter}
f(A) \leq e^\gamma + o(1)
\end{equation}
which was implicitly obtained in the original paper of \Erdos{} \cite{erdos35}.  Indeed, this argument can be viewed as a re-interpretation of the arguments in \cite{erdos35}.  We remark that the above argument recovers the known fact that the latter bound \eqref{latter} extends to sets $A$ that are merely $L$-primitive instead of primitive, in which case the inequality becomes sharp up to the $o(1)$ error; see \cite[Proposition 5.3]{Lichtman}.
\end{remark}

\begin{remark}\label{GPT-rem-2}  The proof of Theorem~\ref{conj:1196} given in this section also yields the more general inequality
$$ f(A \cap \mathcal{D} \cap [x,\infty)) \leq \left(1 + O\left(\frac{1}{\log x}\right)\right) \frac{\sum_{n \in \mathcal{D} \cap [1,x]} \frac{1}{n}}{\log x}$$
for any primitive set $A \subset \N_{\geq 1}$, any $x \geq 2$, and any subset ${\mathcal D}$ of $\N$ which is \emph{divisor closed} in the sense that $n \in {\mathcal D}$ and $m|n$, then $m \in {\mathcal D}$ as well.  For instance, one could take ${\mathcal D} = \N({\mathcal Q})$ for some set of primes ${\mathcal Q}$.  As a further special case, if all the elements of the primitive set $A$ are coprime to a fixed modulus $q$ and are greater than or equal to $x$, then
$$ f(A) \leq \frac{\phi(q)}{q} + O_q\left(\frac{1}{\log x}\right).$$
Also, from the obvious inequality
\begin{equation}\label{nu-dilate}
\nu_0(dn) \leq \frac{1}{d} \nu_0(n)
\end{equation}
for any $n \in \N_{\geq 1}$ and $d \geq 1$, we have the bound
$$ f(A) \leq \frac{1}{d} + O_d\left(\frac{1}{\log x}\right)$$
when $A$ is a primitive set consisting of multiples of $d$ that are greater than or equal to $x$.
\end{remark}

\section{\Erdos{} Primitive Set Conjecture}\label{EPS-sec}

In this section, we verify Theorem \ref{conj:EPS} by the Markov chain method.

\subsection{Construction of Markov chains}

We first construct a downward Markov chain $P(n \searrow m)$ on the set $\N_{\geq 1}$, with the primes $\N_1$ as the absorbing states.  The von Mangoldt downward chain
in Example \ref{vmdc} is almost suitable for this purpose, but has the drawback that prime powers $p^k$, $k \geq 2$, can transition to $1$ with non-zero probability.  To address this issue, we perform the following modification of the von Mangoldt downward chain.  If $n \in \N_{\geq 1}$ is composite but not a prime power, we make no changes, defining
$$ P(n \searrow n/q) \coloneq \frac{\Lam(q)}{\log n}$$
whenever $q$ is a (necessarily proper) divisor of $n$, and $P(n \searrow m)=0$ for all other $m$.  If $n \in \N_{\geq 1}$ is a prime $p$, we define $P(p \searrow p) \coloneq 1$ and $P(p \searrow m) \coloneq 0$ for all other $m$. Finally, if $n = p^k$ for a prime $p$ and some $k \geq 2$, we set
$$ P(n \searrow n/q) = \frac{\Lam(q)}{\log n} = \frac{1}{k}$$
for $q = p, p^2, \dots, p^{k-2}$, and
$$ P(n \searrow n/q) = \frac{\Lam(q)}{\log n} + \frac{1}{k} = \frac{2}{k}$$
if $q = p^{k-1}$.  One can easily check from \eqref{vmi} and the identity $\tfrac{k-2}{k}+\tfrac{2}{k}=1$ that this produces a downward Markov chain on $\N_{\geq 1}$ with absorbing states $\N_1$.

We have the following crucial sub-invariance property:

\begin{lemma}[Sub-invariance]\label{rse}  The weight $\nu_0$ is sub-invariant for this downward Markov chain.
\end{lemma}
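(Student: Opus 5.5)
The plan is to verify \eqref{sub} directly for every $n \in \N_{\geq 1}$ by sorting the possible parents $nq$ of $n$ according to how the modified chain treats them, and then to feed everything into Lemma~\ref{lem:mangoldt-tail-and-B}(ii). The parents fall into three types.
\begin{itemize}
\item A prime parent contributes nothing, since a prime only transitions to itself and $q>1$.
\item A parent $nq$ that is composite but not a prime power contributes exactly the von Mangoldt quantity $\nu_0(nq)\Lam(q)/\log(nq)$.
\item A prime-power parent $nq = p^k$ with $k \geq 2$ contributes the von Mangoldt quantity plus an extra $\frac{1}{k}\nu_0(p^k)$. The extra term is present only when the target is $n = p$.
\end{itemize}

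Consider first the case where $n$ is not a prime. If $n$ is not a prime power, then no $nq$ is a prime power. If $n = p^j$ with $j \geq 2$, then every parent $p^{j+i}$ receives only its von Mangoldt transition, because the bonus goes to $p$ and not to $p^j$. In either case the left-hand side of \eqref{sub} equals
$$\sum_q \nu_0(nq)\frac{\Lam(q)}{\log(nq)} = \frac{1}{n}\sum_q \frac{\Lam(q)}{q\log^2(nq)}.$$
Writing $n = 2^x$ with $x \geq 1$, Lemma~\ref{lem:mangoldt-tail-and-B}(ii) bounds this by $\frac{1}{n\log n}\cdot\frac{x}{x+1/2} \leq \nu_0(n)$, which gives \eqref{sub}.

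The only real work is the case $n = p$ prime, where the extra mass appears. Here the left-hand side equals
$$\frac1p\sum_q \frac{\Lam(q)}{q\log^2(pq)} + \sum_{k\geq 2}\frac1k\cdot\frac{1}{p^k\, k\log p}.$$
After multiplying by $p\log p$, it suffices to show
$$\log p\sum_q \frac{\Lam(q)}{q\log^2(pq)} + \sum_{k\ge2}\frac{1}{k^2p^{k-1}} \leq 1.$$
Set $x = \log_2 p \geq 1$. By Lemma~\ref{lem:mangoldt-tail-and-B}(ii), the first term is at most $\frac{x}{x+1/2}$, which leaves a slack of $\frac{1}{2x+1}$. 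Bounding $\frac{1}{k^2} \leq \frac14$ in each summand of the second term gives at most $\frac14\sum_{k\geq1}p^{-k} = \frac{1}{4(p-1)}$.

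It therefore remains to check the elementary inequality $4(p-1) \geq 2\log_2 p + 1$ for all primes $p$. At $p = 2$ it reads $4 \geq 3$. The left side grows linearly in $p$ while the right side grows logarithmically, so it persists for all $p \geq 2$. This confirms that the sharpened bound $\frac{x}{x+1/2}$ in Lemma~\ref{lem:mangoldt-tail-and-B}(ii), rather than the cruder bound $1$, is exactly what absorbs the redistributed prime-power mass. I expect this bookkeeping at $n = p$ to be the only delicate step, and in particular the $p = 2$ case, where the margin is tightest ($\frac14$ against $\frac13$).
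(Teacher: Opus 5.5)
Your proposal is correct and follows the paper's proof essentially step for step. You use the same split into composite versus prime targets, the same appeal to Lemma~\ref{lem:mangoldt-tail-and-B}(ii) with its slack $\frac{1}{2x+1}$, and the same bound $\frac{1}{4(p-1)}$ on the redistributed prime-power mass. The only difference is the final step: your ``linear beats logarithmic'' remark only gives the inequality $4(p-1)\ge 2\log_2 p+1$ eventually, so you should say that the left side has derivative $4$ while the right side has derivative $\frac{2}{p\log 2}<4$ for $p\ge 2$; the paper instead uses $\log_2 p\le p-1$ to chain $\frac{1}{4(p-1)}\le\frac{1}{2p-1}\le\frac{1}{2x+1}$.
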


\begin{proof}  By \eqref{nu-def}, \eqref{sub} our task is to show that
\begin{equation}\label{qo}
 \sum_{q>1} \frac{\log m}{q \log(mq)} P(mq \searrow m) \leq 1
 \end{equation}
for all $m$. Observe that 
$$ \frac{\log m}{q \log(mq)} P(mq \searrow m) = \log m \cdot \frac{\Lam(q)}{q\log^2(mq)}$$
except when $m$ is a prime $p$ and $q = p^{k-1}$ is a power of that prime, in which case
$$ \frac{\log m}{q \log(mq)} P(mq \searrow m)  = \log m \cdot \frac{\Lam(q)}{q\log^2(mq)} + \frac{1}{k^2 p^{k-1}}.$$
Thus, the left-hand side of \eqref{qo} is equal to
$$ \log m\sum_{q\ge 2} \frac{\Lam(q)}{q\log^2(mq)}$$
when $m$ is composite and
$$ \log m\sum_{q\ge 2} \frac{\Lam(q)}{q\log^2(mq)} + \sum_{k=2}^\infty \frac{1}{k^2 p^{k-1}}$$
when $m$ is a prime $p$.  Applying Lemma \ref{lem:mangoldt-tail-and-B}(ii), we obtain an upper bound of $1$ when $m$ is composite and
$$ \frac{x}{x+\frac{1}{2}} + \sum_{k=2}^\infty \frac{1}{k^2 p^{k-1}}$$
when $m=p$ is prime, where $x = \frac{\log p}{\log 2}$.  Since $x \leq p-1$, we can bound
$$ \sum_{k=2}^\infty \frac{1}{k^2 p^{k-1}} \leq \frac{1}{4} \sum_{k=2}^\infty \frac{1}{p^{k-1}} = \frac{1}{4(p-1)} \leq \frac{1}{2p-1} \leq \frac{1}{2x+1} = 1 - \frac{x}{x+\frac{1}{2}},$$
so we obtain the claim in either case.
\end{proof}

Using Example \ref{adjoint}, we can now construct an adjoint upward Markov chain $P(n \nearrow m)$ on $\N_{\geq 1} \cup \{\infty\}$ to the downward Markov chain $P(n \searrow m)$ using the sub-invariant weight $\nu_0$.

\subsection{Conclusion of the argument}

Define the initial mass distribution $b \colon \N_{\geq 1} \to [0,+\infty)$ by setting $b(p) = \nu_0(p)$ when $p$ is prime and $b(n)=0$ when $n$ is composite.  From \eqref{psum-weight-upper} applied to the upward Markov chain just constructed, we conclude that
\begin{equation}\label{na}
 \sum_{n \in A} h_{b\nearrow}(n) \leq \sum_p \nu_0(p).
\end{equation}
From \eqref{h-recurse-upper} we have $h_{b\nearrow}(p) = b(p) = \nu_0(p)$ for all primes $p$.  From \eqref{nu-recurse} and induction we then see that in fact $h_{b\nearrow}(n) = \nu_0(n)$ for all $n \in \N_{\geq 1}$.  The inequality \eqref{na} then yields Theorem \ref{conj:EPS}.

\begin{remark}
The Lean formalization mentioned in Remark~\ref{remark:lean-erdos164} includes a version of the argument presented in this section.
\end{remark}

\begin{remark}\label{1196-gen} The same argument gives the more general inequality $f(A({\mathcal Q})) \leq f({\mathcal Q})$ for any primitive set $A \subset \N_{\geq 1}$ and any set ${\mathcal Q}$ of primes. This can also be deduced from Theorem~\ref{conj:1196} using the argument used to prove \cite[Theorem 2]{ezhang}.
\end{remark}

\section{Odd Banks--Martin}\label{obm-sec}

We now prove Theorem \ref{conj:oddBM}.  Let $k \geq 1$, let $\mathcal Q$ be a set of odd primes, and let $A$ be a primitive set contained in $\N_{\geq k} = \N_{>k} \cup \N_k$.
Our goal is to show that
$$ f(A(\mathcal Q)) \ \le\ f\big(\N_k(\mathcal Q)\big).$$
By replacing $A$ with $A(\mathcal Q)$ if necessary we may assume that
$$ A = A(\mathcal Q) \subset \N_{\ge k}({\mathcal Q}).$$

\subsection{Construction of Markov chains}

We construct a modification of the von Mangoldt downward chain from Example \ref{vmdc} on $\N_{\ge k}({\mathcal Q})$, with absorbing states $\N_k({\mathcal Q})$.  We cannot quite use this chain directly, due to its ability to jump over the states in $\N_k({\mathcal Q})$ as discussed in Example \ref{chain-ex}.  To get around this, we shall only permit transitions that divide out by a prime.  More precisely, we define
\begin{equation}\label{pdown}
P(n \searrow n/p) \coloneq \frac{v_p(n) \beta_p \log p}{\lambda(n)}
\end{equation}
whenever $n\in \N_{>k}({\mathcal Q})$ and $p|n$ is prime, where $v_p(n)$ is the number of times $p$ divides $n$, $\beta_p$ is the weight
$$ \beta_p \coloneq \frac{p}{p-2},$$
and $\lambda$ is the normalization factor
$$ \lambda(n) \coloneq \sum_{p|n} v_p(n) \beta_p \log p.$$
We set $P(n \searrow m)=0$ for all other $m$.  Finally, for $n \in \N_k({\mathcal Q})$, we set $P(n \searrow n)=1$ and $P(n \searrow m)=0$ for all other $m$.  It is clear that this is a downward Markov chain on $\N_{\ge k}({\mathcal Q})$, with absorbing states $\N_k({\mathcal Q})$.

We have the following analogue of Lemma \ref{rse}:

\begin{lemma}[Sub-invariance]  The weight $\nu_0$ is sub-invariant for this downward Markov chain.
\end{lemma}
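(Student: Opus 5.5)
The plan is to verify \eqref{sub} directly. Fix $m \in \N_{\geq k}(\mathcal Q)$; the only $n$ with $P(n \searrow m) \neq 0$ and $n \neq m$ are $n = mp$ with $p \in \mathcal Q$. For each such $p$ we have $mp \in \N_{>k}(\mathcal Q)$ automatically, since $\Omega(mp) = \Omega(m)+1 > k$. By \eqref{nu-def} and \eqref{pdown}, the inequality to prove is
$$ \sum_{p \in \mathcal Q} \frac{\log m}{p \log(mp)} \cdot \frac{(v_p(m)+1)\beta_p \log p}{\lambda(mp)} \leq 1. $$
The case $m=1$ does not arise when $k \geq 1$, and the contribution of the absorbing self-loop is excluded by $q>1$. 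I would then use the additivity $\lambda(mp) = \lambda(m) + \beta_p \log p$ together with the lower bound $\lambda(n) \geq \log n$, which holds because $\beta_p \geq 1$.

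The model case is $v_p(m)=0$ for all relevant $p$. Here the factor is $\beta_p\log p/\lambda(mp) \leq \beta_p \log p/\log(mp)$, and the sum is at most
$$\log m \sum_{p\geq 3}\frac{\log p}{(p-2)\log^2(mp)}.$$
Inserting \eqref{log-2} and applying Tonelli, this equals
$$\log m\int_0^\infty u\, m^{-u}\sum_{p \geq 3}\frac{\log p}{(p-2)p^u}\,\dd u.$$
By Lemma~\ref{lem:oddzeta} this is at most $\log m\int_0^\infty m^{-u}\,\dd u = 1$, exactly as in the proof of Lemma~\ref{lem:mangoldt-tail-and-B}(ii). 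The choice $\beta_p = p/(p-2)$ is made precisely so that this step closes.

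The main obstacle is the primes $p \mid m$, where the numerator carries $v_p(m)+1$ rather than $1$. Here I would not use the crude bound $\lambda(mp)\geq\log(mp)$ termwise. Instead I would exploit the fact that the $v_p(m)$ part is paid for by $\lambda(m)$ itself. Writing $a_p = v_p(m)\beta_p\log p$, $c_p = \beta_p \log p$ and $L = \lambda(m) = \sum_p a_p$, the naive split $(a_p+c_p)/(L+c_p) \leq a_p/L + c_p/(L+c_p)$ loses too much.

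So I would try a sharper comparison: bound $(a_p + c_p)/(L+c_p)$ by $c_p/\log(mp)$ times a correction factor, and show that the sum of the corrections is absorbed by the slack in Lemma~\ref{lem:oddzeta}. The slack comes from the expansion $\frac{1}{p-2} = \frac1p\sum_{j\geq 0}(2/p)^j$, whose excess over $1/p$ plays the role of the prime-power terms of $\Lambda$. Concretely, I expect to reduce to the von Mangoldt–type expression $\log m\sum_q \Lambda(q)/(q\log^2(mq))$ restricted to odd $q$, with the multiplicity $v_p(m)+1$ matched against powers $p^{j}$. This is the step I would check most carefully, possibly falling back on the monotonicity of $x\mapsto (a+x)/(L+x)$ and a case split between $p \mid m$ and $p \nmid m$.
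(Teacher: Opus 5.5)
There is a genuine gap. The terms with $p\mid m$ are never actually bounded, and the mechanism you propose for them cannot work. These terms always occur, since $k\ge1$ forces $m>1$ with all prime factors in $\mathcal Q$, so your model case never happens on its own.

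Take $\mathcal Q$ to be all odd primes and $m=3^j$, so that $\lambda(m)=3\log m$. The $p=3$ term equals exactly $\frac{j}{3(j+1)}\to\frac13$. The $p\ge5$ terms, bounded as in your model case via $\lambda(mp)\ge\log(mp)$, give $\log m\sum_{p\ge5}\frac{\log p}{(p-2)\log^2(mp)}=1-O(1/j)$. The slack remaining in Lemma~\ref{lem:oddzeta} after integrating against $\log m\cdot m^{-u}$ is only $O(1/\log m)$, because $1-u\sum_{p\ge3}\frac{\log p}{(p-2)p^u}=O(u)$. The excess of $\frac1{p-2}$ over $\frac1p$, which you hope to match against multiplicities, has already been spent inside that lemma. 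So no correction-factor bookkeeping of the kind you describe can absorb a contribution of size $\frac13$. What you discard with $\lambda(mp)\ge\log(mp)$ is the fact that $\lambda(m)$ exceeds $\log m$, and that is exactly what has to pay for the $p\mid m$ terms.

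The missing idea is that the split $\frac{a_p+c_p}{\lambda(m)+c_p}\le\frac{a_p}{\lambda(m)}+\frac{c_p}{\lambda(m)+c_p}$, which you dismissed, is precisely the one the paper uses. It only looks lossy if the second piece is then estimated crudely. Write $L=\log m$ (not your $L$) and $\lambda=\lambda(m)$.

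The first piece is at most $\frac1\lambda\sum_{p\mid m}\frac{v_p(m)\beta_p\log p}{p}$. The identity $\frac{\beta_p}{p}=\frac1{p-2}=\frac{\beta_p-1}{2}$ turns this into exactly $\frac{\lambda-L}{2\lambda}=\frac12-\frac{L}{2\lambda}$.

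For the second piece, write $\frac1{L+\log p}$ and $\frac1{\lambda+\beta_p\log p}$ as Laplace integrals in $t$ and $s$. Then use $p^{-\beta_p s}\le p^{-s}$ and Lemma~\ref{lem:oddzeta} with $u=t+s$ to get
\[ \sum_{p\in\mathcal Q}\frac{L\beta_p\log p}{p(L+\log p)(\lambda+\beta_p\log p)}\le L\int_0^\infty\!\!\int_0^\infty \frac{e^{-Lt-\lambda s}}{t+s}\,dt\,ds = L\int_0^1\frac{d\theta}{L+(\lambda-L)\theta}\le\frac12+\frac{L}{2\lambda}. \]
The equality comes from the substitution $(r,\theta)=(t+s,\frac{s}{t+s})$, and the final inequality from convexity in $\theta$. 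The two pieces sum to $1$; for $m=3^j$ they are roughly $\frac13$ and $\frac12\log 3\approx 0.55$. Your model-case estimate is the degenerate case $\lambda=L$ of this second bound.

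Relatedly, you have misread the role of $\beta_p$. Your model case would close equally well with $\beta_p=1$, by Lemma~\ref{lem:mangoldt-tail-and-B}(ii), and the inflation actually makes that step harder. The choice $\beta_p=p/(p-2)$ is made so that $\lambda(m)-\log m=\sum_p v_p(m)\frac{2\log p}{p-2}$ exactly funds the multiplicity terms. This is also why $p=2$ must be excluded.
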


\begin{proof}
We fix $m \in \N_{\geq k}({\mathcal Q})$ and abbreviate 
$$L\coloneq \log m,\qquad \lambda\coloneq \lambda(m).$$ 
From \eqref{sub}, \eqref{nu-def} and \eqref{pdown}, our task is now to show that
\begin{equation}\label{task}
 \sum_{p\in\mathcal Q}
\frac{L v_p(m) \beta_p\log p}{p(L+\log p)(\lambda+\beta_p\log p)} + \sum_{p \in {\mathcal Q}} \frac{L \beta_p\log p}{p(L+\log p)(\lambda+\beta_p\log p)} \leq 1.
 \end{equation}
To control the first term, we use the inequalities
$$L+\log p\ge L; \quad \lambda+\beta_p\log p\ge\lambda$$
and the identity
$$\frac{\beta_p}{p}=\frac1{p-2}=\frac{\beta_p-1}{2}$$
to bound
\begin{align*}
 \sum_{p\in\mathcal Q}
\frac{L v_p(m) \beta_p\log p}{p(L+\log p)(\lambda+\beta_p\log p)} &\le\frac1{\lambda}\sum_{p\mid m}\frac{v_p(m)\beta_p\log p}{p}\\
&=\frac1{2\lambda}\sum_{p\mid m}v_p(m)(\beta_p-1)\log p\\
&=\frac{1}{2\lambda} (\lambda-L) \\
&= \frac{1}{2} - \frac{L}{2\lambda}.
\end{align*}

For the second term, we instead use the identity
\begin{equation}\label{recip}
\frac1{a}=\int_0^\infty e^{-at}\,\dd t
\end{equation}
for $a = L + \log p, \lambda+\beta_p\log p$
and
$$ \beta_p = \frac{p}{p-2} > 1$$
and Tonelli's theorem to write
\begin{align*}
\sum_{p \in {\mathcal Q}} \frac{L \beta_p\log p}{p(L+\log p)(\lambda+\beta_p\log p)}  &=L\int_0^\infty\int_0^\infty
e^{-Lt-\lambda s}\sum_{p\in\mathcal Q}\frac{\beta_p\log p}{p^{1+t+\beta_p s}}\,\dd t\,\dd s \\
&= L\int_0^\infty\int_0^\infty e^{-Lt - \lambda s}\sum_{p\in\mathcal Q}\frac{\log p}{(p-2)p^{t+\beta_p s}}\,\dd t\,\dd s \\
&\leq L\int_0^\infty\int_0^\infty e^{-Lt - \lambda s}\sum_{p\ge 3}\frac{\log p}{(p-2)p^{t+s}}\,\dd t\,\dd s \\
\end{align*}
and hence by Lemma \ref{lem:oddzeta} 
\begin{align*}
\sum_p \frac{L \beta_p\log p}{p(L+\log p)(\lambda+\beta_p\log p)} &\le
L\int_0^\infty\int_0^\infty
e^{-Lt - \lambda s}\ \frac{\dd t\, \dd s}{t+s} \\
&= L\int_0^1\int_0^\infty e^{-r(L+(\lambda-L)\theta)}\,\dd r\,\dd \theta \\
&=L\int_0^1\frac{\dd\theta}{L+(\lambda-L)\theta}\\
&\leq \frac{L}{2} \left( \frac{1}{L} + \frac{1}{\lambda} \right)\\
&= \frac{1}{2} + \frac{L}{2\lambda}
\end{align*}
where we have used the change of variables $(r,\theta) = (t+s, \frac{s}{t+s})$ (so $\dd r \dd \theta = \frac{\dd t \dd s}{t+s}$), followed by \eqref{recip} and the convexity of $\theta \mapsto \frac{1}{L+(\lambda-L)\theta}$.  Summing the bounds, we obtain \eqref{task} as desired.
\end{proof}

Using Example \ref{adjoint}, we can now construct an adjoint upward Markov chain $P(n \nearrow m)$ on $\N_{\geq k}({\mathcal Q}) \cup \{\infty\}$ to the downward Markov chain $P(n \searrow m)$ using the sub-invariant weight $\nu_0$.

\subsection{Conclusion of the argument}

Define the initial mass distribution $b \colon \N_{\ge k}({\mathcal Q}) \to [0,+\infty)$ by setting $b(n) = \nu_0(n)$ when $n \in \N_k({\mathcal Q})$ and $b(n)=0$ otherwise.  From \eqref{psum-weight-upper} applied to the upward Markov chain just constructed, we conclude that
\begin{equation}\label{na-2}
 \sum_{n \in A} h_{b\nearrow}(n) \leq f( \N_{k}({\mathcal Q}) ).
\end{equation}
From \eqref{h-recurse-upper} we have $h_{b\nearrow}(n) = b(n) = \nu_0(n)$ for $n \in \N_k({\mathcal Q})$.  From \eqref{nu-recurse} and induction we then see that in fact $h_{b\nearrow}(n) = \nu_0(n)$ for all $n \in \N_{\geq k}({\mathcal Q})$.  The inequality \eqref{na-2} then yields Theorem \ref{conj:oddBM}.

\begin{remark}\label{remark:qualitative-1196}
Theorem~\ref{conj:oddBM} also implies the qualitative form of Theorem~\ref{conj:1196}, namely that every primitive set $A \subset [x,\infty)$ satisfies $f(A) \leq 1+o(1)$ as $x \to \infty$, as originally conjectured by \Erdos{}, \Sarkozy{}, and \Szemeredi{}.  This provides an alternate proof of that conjecture in its original form, albeit without the sharp $O(1/\log x)$ rate. The argument is as follows. Let $\eps>0$, let $k$ be sufficiently large depending on $\eps$, let $x$ be sufficiently large depending on $k,\eps$, and let $A \subset [x,\infty)$ be primitive. It will suffice to show that $f(A) \leq 1 + O(\eps)$.

We split $\N = \N_{<k}^o \cup \N_{\geq k}^o$, where $\N_{< k}^o$ (resp.~$\N_{\geq k}^o$) is the set of natural numbers with fewer than $k$ (resp.~at least $k$) odd prime factors, counting multiplicity.  From Theorem \ref{Mertens} we have
$$ f(\N_{<k}^o \cap [e^{e^{m-1}},e^{e^m}]) \ll \frac{1}{e^m} \sum_{n \in \N_{<k}^o \cap [1,e^{e^m}]} \frac{1}{n} \leq \frac{1}{e^m} \left( \sum_{j=0}^\infty \frac{1}{2^j} \right) \sum_{r<k} \left( \sum_{p \leq e^{e^m}} \frac{1}{p} \right)^r \ll_k \frac{m^{k-1}}{e^m}$$
for any $m \geq 1$; summing over $m \geq \log\log x$ we conclude that $f(\N_{<k}^o \cap [x,+\infty)) \leq \eps$ for $x$ large enough.  Thus, by removing the portion of $A$ that lies in $\N_{<k}^o$ we may assume that $A \subset \N_{\geq k}^o$.  This allows us to decompose $A = \bigcup_{j=0}^\infty 2^j \cdot A_j$ for some $A_j \subset \N_{\geq k}(\mathcal{Q})$, where $\mathcal{Q} \coloneq \N_1 \setminus \{2\}$ are the odd primes.  Each $A_j$ is primitive, thus by Theorem~\ref{conj:oddBM} and \eqref{nu-dilate} we conclude that
$$ f(A) \leq \sum_{j=0}^\infty \frac{f(A_j)}{2^j} \leq \sum_{j=0}^\infty \frac{f(\N_k(\mathcal{Q}))}{2^j} = 2 f(\N_k(\mathcal{Q})).$$
By \cite[Corollary~4.2]{Lalmost}, $f(\N_k(\mathcal{Q}))$ converges to $1/2$ as $k \to \infty$.  Setting $k$ sufficiently large depending on $\eps$, we obtain the claim.
\end{remark}

\begin{remark}\label{GPT-rem-3}  As mentioned in the introduction, the full Banks--Martin conjecture 
$f(A \cap \N_{\geq k}({\mathcal Q})) \leq f(\N_k({\mathcal Q}))$ can fail if ${\mathcal Q}$ is allowed to contain $2$.  However,
for $k=1$ this bound holds thanks to Remark \ref{1196-gen}.  Also, by decomposing $A = A_* \cup \bigcup_{j=0}^\infty 2^j \cdot A_j$ for odd primitive $A_j \subset \N_{\geq 1}$, and a remainder set $A_*$ consisting of at most one power of two, and then applying \eqref{nu-dilate}, we obtain the more complicated bound
$$ f(A \cap \N_{\geq k}({\mathcal Q})) \leq \sum_{j=0}^{k-1} 2^{-j} f( \N_{\geq k-j}(\mathcal{Q} \backslash \{2\}) ) + \frac{1}{2^k k \log 2}$$
in the general case.
\end{remark}

\section{\texorpdfstring{$2$}{2} is \Erdos{}-strong}\label{2-strong-sec}

In this section, we prove Theorem \ref{2-strong}.  We need to show that if $A$ is a primitive set of even numbers, then 
$$\sum_{n \in A} \nu_0(n) \leq \nu_0(2).$$
We can write $A = 2 \cdot A'$ for a primitive set $A'$ of natural numbers, and rewrite the objective as
\begin{equation}\label{n2}
\sum_{n \in A'} \nu_2(n) \leq \nu_2(1)
\end{equation}
where $\nu_2 \colon \N \to (0,\infty)$ is the rescaled weight
$$\nu_2(n) \coloneq 2 \nu_0(2n) = \frac{1}{n \log(2n)}.$$

Let $P(n \searrow m)$ be the von Mangoldt downward chain on $\N$ (with absorbing state $\{1\}$) from Example \ref{vmdc}.  From Lemma \ref{lem:mangoldt-tail-and-B}(iii) and \eqref{sub}, \eqref{vmdc-def} we see that $\nu_2$ is sub-invariant for this downward chain.  We then use Example \ref{adjoint} to form the adjoint upward Markov chain $P(n \nearrow m)$ on $\N \cup \{\infty\}$.

Let $b \colon \N \to [0,\infty)$ be the weight defined by setting 
$$ b(1) = \nu_2(1) = \frac{1}{\log 2}$$
and $b(n)=0$ for $n>1$.  From \eqref{h-recurse-upper}, \eqref{nu-recurse} and induction we thus have $h_{b\nearrow}(n) = \nu_2(n)$ for all $n \in \N$.  The claim \eqref{n2} now follows from \eqref{psum-weight-upper}.

\begin{remark} \label{remark:p-strong}
A modification of this argument can also be used to recover the result from \cite{Lichtman} that the odd primes $p$ are also \Erdos{}-strong, thus providing an alternate route to the proof of Conjecture \ref{conj:EPS}.
The basic setup uses the rescaled weight $\nu_p(n) \coloneq \frac{1}{n\log(pn)}$ and restricts the state space to $p$-rough integers $\mathcal R_p$.
(The set of $2$-rough integers $\mathcal R_2$ is simply all of $\N$.)
Instead of Lemma \ref{lem:mangoldt-tail-and-B}(iii), we need the following $p$-rough shifted estimate:
\begin{equation}\label{p-rough-shifted}
\sum_{\substack{q \in \mathcal R_p\\ q>1}}
\frac{\Lam(q)}{q\log(mq)\log(pmq)}
\leq
\frac{1}{\log(pm)}, \qquad \text{for $m \in \mathcal R_p$.}
\end{equation}
\end{remark}

\begin{remark} \label{remark:lean-erdos164}
All of the results of this section were formalized~\cite{lean-erdos164} in Lean by the first author using Codex from OpenAI, albeit in the flow language of Section~\ref{subsection:flow} instead of Markov chains.
This formalization includes two proofs of Conjecture~\ref{conj:EPS}: one using the ideas of Section~\ref{EPS-sec} and one using the ideas in this section.
\end{remark}

\section{Reproof of an inequality of Ahlswede, Khachatrian, and S\'ark\"ozy}\label{AKS-sec}

As a further quick application of the Markov chain machinery, we establish Theorem \ref{AKS-thm}.

Fix $3 \leq x \leq y$.  Without loss of generality, we may assume that the primitive set $A$ is contained in $[y/x,y]$.
Set $s \coloneq 1 - \frac{1}{10 \log x}$.  We consider the probability measure $w$ on $\N_{\geq 1}$ defined by setting
$$ w(p^j) \coloneq \frac{1/p^{js}}{Z}$$
if $p \leq x$ and $j \geq 1$, and $w(q)=0$ otherwise, where $Z$ is the ``partition function''
$$ Z \coloneq \sum_{p \leq x} \sum_{j=1}^\infty 1/p^{js}.$$
This is clearly a probability measure, and so we can form the upward Markov chain on $\N \cup \{\infty\}$ by using the simple multiplicative random walk defined in Example \ref{msrw}.  We then define an initial mass function $b \colon \N \to [0,+\infty)$ by setting $b(n) = \frac{1}{n^s}$ if $n \leq y$ and $n$ is \emph{$x$-rough} in the sense that all prime factors of $n$ are at least $x$, and $b(n)=0$ otherwise.  From \eqref{psum-weight-upper} we conclude that
\begin{equation}\label{p1}
 \sum_{n \in A} h_{b\nearrow}(n) \leq \sum_{n_0 \in {\mathcal N}} b(n_0).
 \end{equation}
From standard sieve theory (see, e.g., \cite[Chapter III.6, Theorem 3]{tenenbaum}) we see that the number of $x$-rough elements of $[2^k, 2^{k+1}]$ is $O(2^k / \log x)$ for any $k$, hence
\begin{equation}\label{p2}
\sum_{n_0 \in {\mathcal N}} b(n_0) \ll \sum_{2^k \ll y} \frac{2^k/\log x}{2^{ks}} \ll y^{1-s}
\end{equation}
by the choice of $s$ and the geometric series formula.  Now we lower bound the hitting probability $h_{b\nearrow}(n)$ for some $n \in [y/x,y]$.  We can factor
$$ n = m q_1 \dots q_k$$
where $m \leq y$ is $x$-rough, and $q_1,\dots,q_k$ are powers of primes less than or equal to $x$ that are pairwise coprime.  Each of the $k!$ permutations $q_{\pi(1)},\dots,q_{\pi(k)}$ of $q_1,\dots,q_k$ then creates an upward divisibility chain
$$ m | m q_{\pi(1)} | m q_{\pi(1)} q_{\pi(2)} | \dots | m q_{\pi(1)} \dots q_{\pi(k)} = n$$
from $m$ to $n$.  Calculating the probability of this chain arising as an upward chain from $m$, we obtain the lower bound
$$h_{b\nearrow}(n) \geq b(m) \frac{k! \prod_{i=1}^k 1/q_i^s}{Z^k} = \frac{1}{n^s} \frac{k!}{Z^k} = \frac{1}{n^s} \frac{1}{e^Z \mathbb{P}(X = k)}$$
where $X$ is a Poisson random variable of rate $Z$.  From the local central limit theorem (see, e.g., \cite[Chapter VII]{petrov}) or Stirling's formula, the maximum probability density $\sup_k \mathbb{P}(X=k)$ of such a variable is $O(1/\sqrt{Z})$, thus
$$h_{b\nearrow}(n) \gg \frac{\sqrt{Z}}{e^Z} \frac{1}{n^s},$$
so if we can establish the asymptotic
\begin{equation}\label{qasym}
 Z = \log\log x + O(1),
 \end{equation}
then we would have
$$h_{b\nearrow}(n) \gg \frac{\sqrt{\log\log x}}{\log x} \frac{1}{n^s} \asymp \frac{\sqrt{\log\log x}}{\log x} \frac{y^{1-s}}{n} $$
and the claim now follows from \eqref{p1}, \eqref{p2}.

It remains to show \eqref{qasym}.  The contribution of the higher powers $j \geq 2$ is $O( \sum_p \frac{1}{p^s (p^s-1)}) = O(1)$, so it suffices to show that
$$ \sum_{p \leq x} \frac{1}{p^s} = \log\log x + O(1).$$
But from the mean value theorem one has $\frac{1}{p^s} = \frac{1}{p} + O(\frac{\log p}{p \log x})$ for $p \leq x$, and the claim then follows from Theorem \ref{Mertens}.

\begin{remark}\label{lym-rem} The above argument also yields an LYM-type improvement to Theorem \ref{AKS-thm}, namely that
$$ \sum_{n \in A \cap [y/x,y]} \frac{1}{n \mathbb{P}(X=\omega_{\leq x}(n))} \ll \log x,$$
where $\omega_{\leq x}(n)$ is the number of prime factors of $n$ that are less than or equal to $x$, and $X$ is a Poisson random variable of rate $\log\log x+O(1)$.
\end{remark}

\section{Divisibility chains}\label{divis-sec}

In this section, we present a proof of Theorem \ref{conj:1217} using the Markov chain method.

Let $A$ be a set of natural numbers whose upper doubly logarithmic density
$$ \Delta \coloneq \limsup_{x\to\infty}\frac{1}{\log\log x} f(A \cap [1,x])$$
is positive (this is implied, for instance, if $A$ has positive lower logarithmic density).  In particular, we can find a sequence $x_j \to\infty$ such that
\begin{equation}\label{fax}
 f(A \cap [1,x_j]) = (\Delta-o(1)) \log\log x_j
 \end{equation}
as $j \to \infty$.

It will suffice to find a strictly increasing infinite divisibility chain
$$ n_1 | n_2 | n_3 | \dots$$
which has asymptotically large intersection with $A$ in the sense that
$$ \limsup_{j \to \infty} \frac{\# \{ i: n_i \in A \cap [1,x_j]\}}{\log\log x_j} \geq \Delta,$$
since the restriction of this divisibility chain to $A$ then gives the desired infinite chain in $A$.

By Example \ref{invariants}, the weight $\nuMangoldt$ is invariant for the von Mangoldt downward chain on $\N$ (with absorbing state $\{1\}$) from Example \ref{vmdc}.  Let $P(n \nearrow m)$ be the adjoint upward Markov chain on $\N \cup \{\infty\}$ given by Example \ref{adjoint}.  Then ${\mathbb P}_1$ generates an infinite divisibility chain
$$ 1 = n_0 | n_1 | n_2 | \dots,$$
which almost surely avoids the absorbing state $\infty$.  From \eqref{nu-recurse}, \eqref{h-recurse-upper} (starting from the Kronecker initial mass $b(n) = 1_{n=1}$) and induction we can compute the upward hitting probability
\begin{equation}\label{ksum}
\sum_{k=0}^\infty {\mathbb P}_1( n_k = n ) = \nuMangoldt(n)
\end{equation}
for all $n \in \N$. From \eqref{num-asym}, \eqref{fax} one has
$$ \sum_{n \in A \cap [1,x_j]} \nuMangoldt(n) = \sum_{n \in A \cap [1,x_j]} \nu_0(n)  + O(1) = (\Delta-o(1)) \log\log x_j$$
and hence
$$ \sum_{k=0}^\infty {\mathbb P}_1( n_k \in A \cap [1,x_j]) = (\Delta-o(1)) \log\log x_j.$$
Thus, if we introduce the random variables
$$ X_j \coloneq \frac{\# \{ i: n_i \in A \cap [1,x_j]\}}{\log\log x_j}$$
we see that
$$ \limsup_{j \to \infty} \E_1 X_j = \Delta.$$
We will shortly establish the uniform integrability bound
\begin{equation}\label{uniform}
 \sup_j \E_1 X_j^2 \ll 1.
\end{equation}
From the reverse Fatou lemma\footnote{Indeed, one can use the usual Fatou lemma applied to $N - \min(X_j,N)$ to obtain the truncated reverse Fatou inequality $\E_1 \limsup_{j \to \infty} \min(X_j,N) \geq \limsup_{j \to \infty} \E_1 \min(X_j,N)$, and use \eqref{uniform} to show that the right-hand side converges to $\limsup_{j \to \infty} \E_1 X_j$ as $N \to \infty$.}
(see, e.g., \cite[Corollary 4.2]{Kamihigashi2017}), this implies that
$$ \E_1 \limsup_{j \to \infty} X_j \geq \limsup_{j \to \infty} \E_1 X_j$$
and hence with positive probability one has
$$ \limsup_{j \to \infty} X_j \geq \Delta$$
which gives the claim.

It remains to prove \eqref{uniform}.  Expanding out the square, we can bound
$$ \E_1 X_j^2 \ll \frac{1}{(\log\log x_j)^2} \sum_{n \leq x_j} \sum_{1 \leq m \leq n} \mathbb{P}_1( m, n \in \{n_1, n_2, \dots\}).$$
If $n$ lies in the divisibility chain $\{n_1,n_2,\dots\}$, then the length of the divisibility chain prior to $n$ is at most $\Omega(n)$, and so for any given realization of that chain, only $\Omega(n)$ values of $m$ can occur.  Taking expectations, we obtain the conditional probability bound
$$ \sum_{1 \leq m \leq n} \mathbb{P}_1( m, n \in \{n_1, n_2, \dots\} \mid n \in \{n_1,n_2,\dots\}) \leq \Omega(n)$$
and hence by \eqref{ksum}
$$ \sum_{1 \leq m \leq n} \mathbb{P}_1( m, n \in \{n_1, n_2, \dots\}) \leq \Omega(n) \nuMangoldt(n).$$
From \eqref{num-asym} we have $\nuMangoldt(n) \ll \nu_0(n)$.
Summing in $n$ and using $\Omega(n) = \sum_p \sum_{j=1}^\infty 1_{p^j|n}$, we conclude that
$$  \E_1 X_j^2 \ll \frac{1}{(\log\log x_j)^2}\sum_p \sum_{j=1}^\infty \sum_{m \leq x / p^j} \nu_0(p^j m).$$
From \eqref{nu-dilate} one has
$$\sum_{m \leq x / p^j} \nu_0(p^j m) 
\leq \frac{1}{p^j} \sum_{m \leq x} \nu_0(m) \ll \frac{1}{p^j} \log\log x,$$
and the claim follows from Theorem \ref{Mertens}(ii).

\begin{remark}\label{gpt-rem-4} By taking contrapositives of \eqref{fah} we obtain the following ``single-scale'' version of Theorem \ref{conj:1217}: for any $2 \leq x \leq X$ and any non-empty $A \subset [x,\infty)$, one can find a strictly increasing divisibility chain $n_1 | \dots | n_h$ in $A$ of length at least
$$ h \geq \left( 1 - O\left(\frac{1}{\log x}\right) \right) f(A).$$  
As a consequence, if \eqref{fax} holds, then for each $j$ one can find a strictly increasing divisibility chain $n_{j,1} | \dots | n_{j,h_j}$ in $A \cap [1,x_j]$ with length at least
$$ h_j \geq (\Delta-o(1)) \log\log x_j$$
as $j \to \infty$.  Unfortunately, these chains are not related to each other as $j$ varies, and do not enjoy bounds uniform in $j$, so there is no obvious way to ``glue'' these finite divisibility chains into an \emph{infinite} strictly increasing divisibility chain to recover Theorem~\ref{conj:1217}.
\end{remark}

\section{Discussion}\label{discussion}

\subsection{Flow networks} \label{subsection:flow}

Many of the Markov chain arguments in this paper can be reformulated using the closely related language of flow networks.  We illustrate this with an alternate proof of Theorem \ref{conj:1196}.  Consider a directed weighted graph (or ``flow network'') on the natural numbers in which there is a directed edge from $nq$ to $n$ of weight 
$$w(nq \mapsto n) \coloneq \nu_0(nq) P(nq \searrow n) = \frac{\Lambda(q)}{nq \log^2(qn)}$$ 
whenever $n \geq 1$ and $q>1$, where $P(nq \searrow n)$ is the von Mangoldt downward chain from Example \ref{vmdc}.  For any $n > 1$, we can use Lemma \ref{lem:mangoldt-tail-and-B}(i) to compute the net inflow at $n$:
$$ \sum_q w(nq \mapsto n) = \frac{1}{n \log n} + O\left(\frac{1}{n \log^2 n}\right),$$
while from \eqref{vmi} we have an exact formula for the net outflow:
$$ \sum_{q|n} w(n \mapsto n/q) = \frac{1}{n \log n}.$$
Thus this flow network has a ``divergence'' of $O\left(\frac{1}{n \log^2 n}\right)$ at each $n \geq 1$.  

Now suppose that $A \subset [x,X]$ is primitive.  Let $\Omega$ be the set of all divisors of elements of $A$ which are greater than or equal to $x$.  Summing up all the divergences in $\Omega$, we see that the difference between the net inflow and net outflow of $\Omega$ is at most
$$ \sum_{n \geq x} O\left(\frac{1}{n \log^2 n}\right) = O\left(\frac{1}{\log x}\right),$$
thus
$$ \sum_{n \in \Omega, nq \notin \Omega} w(nq \mapsto n) = \sum_{n \in \Omega, q|n, n/q \notin \Omega} w(n \mapsto n/q) + O\left(\frac{1}{\log x}\right).$$
Every element $n$ of $A$ is maximal in $\Omega$ under the divisibility poset, and thus contributes $\frac{1}{n \log n} + O\left(\frac{1}{n \log^2 n}\right)$ to the inflow, thus
$$ \sum_{n \in \Omega, nq \notin \Omega} w(nq \mapsto n)  \geq f(A) -  O\left(\frac{1}{\log x}\right).$$
Meanwhile, the net outflow is at most
$$\sum_{n \in \Omega, q|n, n/q \notin \Omega} w(n \mapsto n/q) \leq \sum_{n \geq x > n/q, q|n} w(n \mapsto n/q)$$
which is bounded by $1 + O\left(\frac{1}{\log x}\right)$ thanks to \eqref{filip}.  This concludes the alternate proof of Theorem \ref{conj:1196}.  

\begin{remark}\label{gpt-rem-5} The above argument generalizes to a ``cut-capacity inequality'': if one has any downward Markov chain on a set ${\mathcal N}$ of natural numbers with absorbing states ${\mathcal A}$, $\nu$ is any sub-invariant weight for this chain, and $S$ is any subset of ${\mathcal N} \backslash {\mathcal A}$, then for any primitive set $A$ one has
$$ \sum_{n \in A \cap S} \nu(n) \leq \sum_{n \in S} \nu(n) \sum_{m|n: m \in {\mathcal N} \backslash S} P(n \searrow m).$$
For instance, one can recover Theorem~\ref{conj:1196} from this inequality by using the von Mangoldt downward flow, $S = [x,+\infty)$, and $\nu$ equal to either $\nu_0$ or $\nuMangoldt$.  We leave the proof of this inequality (and similar flow network reformulations of other arguments in this paper) to the interested reader.
\end{remark}

\subsection{Zeta process} \label{subsection:zeta}

The von Mangoldt measure $\nuMangoldt$ introduced in Example \ref{invariants} has an appealing probabilistic interpretation in terms of the following\footnote{This process was implicit in several proofs of Theorem \ref{conj:1196} that were generated by Arb Research using GPT-5.4; see \url{https://drive.google.com/file/d/1Ubj3t5QdkNu0u9WQipdaFQ8zln_JlbMJ/edit}.} ``zeta process''.  For each prime $p$ and natural number $k$, let $E_{p,k}$ be an independent exponential random variable with rate $\log p$, so that
$$ \mathbb{P}( E_{p,k} \geq s) = p^{-s}$$
for all $s>0$.  If we then let 
$$ e_{p,s} \coloneq \max \{ k : E_{p,1},\dots,E_{p,k} \geq s \}$$
denote the largest $k$ for which the $E_{p,k}$ consistently stay above $s$ for any prime $p$ and $s > 1$, then the $e_{p,s}$ are geometric random variables of mean $1/(p^s-1)$ which are independent in $p$, thus
$$ \mathbb{P}( e_{p,s} = k) = p^{-ks} (1-p^{-s})$$
for any $k \geq 0$.  As is well known, this implies that the product $Z_s \coloneq \prod_p p^{e_{p,s}}$ has the zeta distribution
\begin{equation}\label{zhit}
\mathbb{P}(Z_s = n) = \prod_p p^{-v_p(n)s} (1-p^{-s}) = \frac{1}{\zeta(s) n^s}.
\end{equation}
But as the $e_{p,s}$ are clearly non-increasing in $s$, we obtain a continuous divisibility chain that couples together the zeta distributions for $s>1$: $Z_s | Z_t$ whenever $1 < t < s$.

For $s > 1$ and $ds$ infinitesimal, the probability that the exponential random variable $E_{p,k}$ lies in $[s,s+ds]$ conditioning on $E_{p,k} \geq s$ is approximately
$$ -\frac{\dv{s}(p^{-s})}{p^{-s}}\,\dd s = \log p\,\dd s.$$
Summing over $p, k$ using \eqref{vmi}, one can conclude that the probability that there is a jump between $Z_s$ and $Z_{s+ds}$ conditioning on $Z_s$ is equal to $\log Z_s\,\dd s$ up to higher-order terms.  From this, one can show that the probability that a given natural number $n>1$ arises at least once in the continuous divisibility chain $Z_s$ (and which therefore will almost surely experience a transition between $Z_s = n$ and $Z_{s+ds} < n$ for some $s$) is
$$ \int_1^\infty \mathbb{P}(Z_s = n) \log n\,\dd s $$
which is precisely $\nuMangoldt(n)$ thanks to \eqref{zhit}, \eqref{mang-def}.

The weight $\nuMangoldt$ was already used in Section \ref{divis-sec} to prove Theorem \ref{conj:1217}.  As it is invariant rather than merely sub-invariant, it can be used as a replacement for $\nu_0$ in many of the other arguments in the paper, leading to some minor simplifications; for instance, one no longer needs to attach an absorbing state $\infty$ to the upward Markov chain.  We leave these modifications of the arguments to the interested reader.

\subsection{Further results} \label{subsection:further}

Przemek Chojecki\footnote{\url{https://www.erdosproblems.com/forum/thread/858}}, using GPT-5.4, has used similar methods to those in this paper to answer a separate question of \Erdos{} \cite[p.~128]{erdos70}, \cite[Problem \#858]{bloom}, namely determining the maximum value of
$$ \frac{1}{\log N} \sum_{n \in A} \frac{1}{n}$$
for a given large $N$, where $A \subset \{1,\dots,N\}$ is a set that does not have any solution to $at=b$ with $a,b \in A$ and with the smallest prime factor of $t$ at least $N$.  This maximum was determined to be $c+o(1)$ for a certain explicit constant $c = 0.618\dots$.

As observed by Will Sawin and Ofir Gorodetsky, respectively\footnote{\url{https://www.erdosproblems.com/forum/thread/1196}.}, analogues of these results can also be established in function field settings (where one considers the divisibility poset on polynomials over a finite field) or permutations (in which the partial order on permutations arises from deleting cycles and then relabeling).  

Several questions remain open in this subject. One such question, sometimes known as the \Erdos{} integer dilation approximation problem \cite{erdos61}, \cite{erdos73}, \cite{erdos77}, \cite[p.~101]{erdos-survey}, \cite{erdos92}, \cite{erdos97}, \cite[Problem \#143]{bloom} asks whether the \Erdos{} bound
$$ \sum_{x \in A} \frac{1}{x \log x} < \infty$$
continues to hold when $A$ is now a subset of $\R_{\ge2}$ which is ``approximately primitive'' in the sense that $|qx-y| \geq 1$ for all distinct $x,y \in A$ and natural numbers $q$.  This remains challenging with our techniques, as there does not seem to be a poset structure analogous to the divisibility poset with which to set up a useful Markov chain or flow network.  The weaker bound
$$ \sum_{x \in A \cap [1,N]} \frac{1}{x} = o(\log N)$$
as $N \to \infty$ was recently obtained in \cite{KKL} using the machinery of GCD graphs.

\section{Acknowledgments and AI disclosure}\label{acknowledgments-sec}

TT was supported by the James and Carol Collins Chair, the Mathematical Analysis \& Application Research Fund, and by NSF grant DMS-2347850, and is particularly grateful to recent donors to the Research Fund. 

ChatGPT was used to generate code for several of the images in this paper, to search for relevant literature (for instance, in locating references for the proof of Lemma \ref{lem:phi}), to proofread the paper and to offer additional suggested results and remarks\footnote{Specifically, Remarks \ref{GPT-rem-1}, \ref{GPT-rem-2}, \ref{GPT-rem-3}, \ref{gpt-rem-4}, and \ref{gpt-rem-5} were derived from comments suggested by GPT, although several further suggestions were discarded.}, and to perform numerics to guide the proof of Lemma \ref{lem:oddzeta}.  

The initial proof of Theorem \ref{conj:1196} was generated by an autonomous run\footnote{\url{https://chatgpt.com/share/69dd1c83-b164-8385-bf2e-8533e9baba9c}} of GPT-5.4 Pro; a similar run also established Theorem \ref{conj:1217}.  GPT-5.4 Pro was also used to assist with the initial proof of Theorem \ref{conj:EPS}, with the main human contributions being the downward divisor chain and suggesting Lemmas~\ref{lem:phi} and \ref{lem:mangoldt-tail-and-B}(ii) to establish the sub-invariance property. In addition, an early version of GPT-5.5 Pro was used to assist with the initial proof of Theorem \ref{conj:oddBM}. Finally, GPT-5.4 Pro helped prove Theorem~\ref{2-strong}. Nevertheless, the final proofs in this paper have been generated and reviewed by the human authors, using the AI-generated proofs as starting points when appropriate.

The Lean formalization in~\cite{lean-erdos164} was generated using OpenAI's Codex.
The Lean formalization in~\cite{lean-erdos1196} was generated using Math Inc.'s Gauss.

We thank Thomas Bloom for creating and moderating the website \cite{bloom}, where many of the key ideas and arguments presented in this paper were first shared and discussed by several members of the community around that website, including the authors and other contributors acknowledged separately in this paper.

\bibliographystyle{amsplain}

\begin{thebibliography}{99}

\bibitem{adell-lekuona}
J. A. Adell and A. Lekuona,
\emph{Dirichlet's eta and beta functions: Concavity and fast computation of their derivatives},
J. Number Theory \textbf{157} (2015), 215--222.

\bibitem{lean-erdos164}
B. Alexeev,
\textit{A Lean formalization of \Erdos Problem 164},
Lean 4 formalization, Mathlib v4.29.0 and Lean v4.29.0,
commit a9d31bcdffd1a68544b4e9214b867b2b34912fd2, 2026.
Available at
\url{https://github.com/plby/lean-proofs/blob/a9d31bcdffd1a68544b4e9214b867b2b34912fd2/ErdosProblems/Erdos164.md}.

\bibitem{alzer-kwong}
H. Alzer and M. K. Kwong,
\emph{On the concavity of Dirichlet's eta function and related functional inequalities},
J. Number Theory \textbf{151} (2015), 172--196.

\bibitem{AKS}
R. Ahlswede, L. Khachatrian, and A. S\'ark\"ozy,
\emph{On the density of primitive sets},
J. Number Theory \textbf{108} (2004), 319--361.

\bibitem{banks-martin}
W. D. Banks and G. Martin,
\emph{Optimal primitive sets with restricted primes},
Integers \textbf{13} (2013), Paper No. A69, 10 pp.

\bibitem{behrend}
F. Behrend,
\emph{On sequences of numbers not divisible one by another},
J. London Math. Soc. \textbf{10} (1935), 42--44.

\bibitem{bloom}
T. F. Bloom,
\emph{\Erdos{} problems},
\url{https://www.erdosproblems.com/}.

\bibitem{cohen}
H. Cohen, Number Theory, Volume II: Analytic and Modern Tools, GTM Vol. 240, Springer, 2007.

\bibitem{davenport}
H. Davenport and P. \Erdos{},
\emph{On sequences of positive integers},
Acta Arith. \textbf{2} (1936), 147--151.

\bibitem{erdos35}
P. \Erdos{},
\emph{Note on sequences of integers no one of which is divisible by any other},
J. London Math. Soc. \textbf{10} (1935), 126--128.

\bibitem{erdos61}
P. \Erdos{},
\emph{Some unsolved problems}, Magyar Tud. Akad. Mat. Kutat\'o Int. K\"ozl. (1961), 221--254.

\bibitem{erdos70}
P. \Erdos{},
\emph{Some extremal problems in combinatorial number theory}, Mathematical Essays Dedicated to A. J. Macintyre (1970), 123--133.

\bibitem{erdos73}
P. \Erdos{},
\emph{Problems and results on combinatorial number theory}, A survey of combinatorial theory (Proc. Internat. Sympos., Colorado State Univ., Fort Collins, Colo., 1971) (1973), 117--138.

\bibitem{erdos76}
P. \Erdos{}, Conjecture 2.1, 
\emph{Problems and results on combinatorial number theory. II},
J. Indian Math. Soc. (N.S.) (1976), 285--298.

\bibitem{erdos77}
P. \Erdos{},
\emph{Problems and results on combinatorial number theory. III}, Number theory day (Proc. Conf., Rockefeller Univ.,
New York, 1976) (1977), 43--72.

\bibitem{erdos-survey}
P. \Erdos{},
\emph{A survey of problems in combinatorial number theory},
Ann. Discrete Math. (1980), 89--115.

\bibitem{erdos86}
P. \Erdos{},
\emph{Probl\'emes et r\'esultats en th\'eorie des nombres},
Conf\'erence de Paul \Erdos \'a l'Universit\'e de Limoges le 21 Octobre 1986, r\'edig\'e par Fran\c{c}ois Morain.

\bibitem{erdos92}
P. \Erdos{},
\emph{Some of my forgotten problems in number theory}, Hardy-Ramanujan J. (1992), 34--50.

\bibitem{erdos97}
P. \Erdos{},
\emph{Some of my favorite problems and results}, The mathematics of Paul \Erdos{}, I (1997), 47--67.

\bibitem{ess0}
P. \Erdos{}, A. S\'ark\"ozy, and E. Szemer\'edi,
\emph{On divisibility properties of sequences of integers}, Studia Sci. Math. Hungar. \textbf{1} (1966), 431--435.

\bibitem{ess1}
P. \Erdos{}, A. S\'ark\"ozy, and E. Szemer\'edi,
\emph{On an extremal problem concerning primitive sequences},
J. London Math. Soc. \textbf{42} (1967), 484--488.

\bibitem{ess2}
P. \Erdos{}, A. S\'ark\"ozy, and E. Szemer\'edi,
\emph{On divisibility properties of sequences of integers},
Colloq. Math. Soc. J\'anos Bolyai \textbf{2} (1970), 35--49.

\bibitem{ezhang} P. Erd\H os, Z. Zhang, \textit{Upper bound of $\sum 1/(a_i \log a_i)$ for primitive sequences}, Proc. Amer. Math. Soc., {\bf 117} (1993), 891--895.

\bibitem{glw}
O. Gorodetsky, J. D. Lichtman, and M. D. Wong,
\emph{On \Erdos{} sums of almost primes},
Comptes Rendus. Math\'ematique \textbf{362} (2024), no. G12, 1571--1596.

\bibitem{HalbRoth} H. Halberstam, K. F. Roth, {\it Sequences}, Oxford University Press, Oxford (1966)

\bibitem{Hsetmult} R. R. Hall, {\it Sets of multiples}, Cambridge Tracts in Mathematics, {\bf 118}, Cambridge University Press, Cambridge (1996)

\bibitem{Kamihigashi2017}
T. Kamihigashi,
\emph{A generalization of Fatou's lemma for extended real-valued functions on $\sigma$-finite measure spaces: With an application to infinite-horizon optimization in discrete time},
J. Inequal. Appl. \textbf{2017} (2017), Paper No. 24, 15 pp.

\bibitem{KKL}
D. Koukoulopoulos, Y. Lamzouri, and J. D. Lichtman, \emph{\Erdos{}'s integer dilation approximation problem and GCD graphs}, arXiv:2502.09539.


\bibitem{Lichtman}
J. D. Lichtman,
\emph{A proof of the \Erdos{} primitive set conjecture},
Forum Math. Pi \textbf{11} (2023), Paper No. e18, 21 pp.; arXiv:2202.02384.

\bibitem{Lalmost}
J. D. Lichtman,
\emph{Almost primes and the Banks--Martin conjecture},
J. Number Theory \textbf{211} (2020), 513--529; arXiv:1909.00804.

\bibitem{LP}
J. D. Lichtman and C. Pomerance,
\emph{The \Erdos{} conjecture for primitive sets},
Proc. Amer. Math. Soc. Ser. B \textbf{6} (2019), 1--14.

\bibitem{lubell}
D. Lubell, \emph{A short proof of Sperner's lemma}, Journal of Combinatorial Theory \textbf{1} (1966), 299.

\bibitem{lean-erdos1196}
Math Inc.,
\textit{Primitive Sets Above \(x\) in Lean},
Lean 4 formalization,
Lean v4.30.0-rc1,
commit 02fba13be7487cc51315f68d8fa7ef277633d3c8, 2026.
Available at
\url{https://github.com/math-inc/Erdos1196/tree/02fba13be7487cc51315f68d8fa7ef277633d3c8}.

\bibitem{meshalkin}
L. D. Meshalkin, \emph{Generalization of Sperner's theorem on the number of subsets of a finite set}, Theory of Probability and Its Applications, \textbf{8} (1963), 203--204.

\bibitem{montgomeryvaughan}
H. L. Montgomery and R. C. Vaughan,
\emph{Multiplicative Number Theory I: Classical Theory},
Cambridge Studies in Advanced Mathematics, vol. 97,
Cambridge University Press, 2007.

\bibitem{petrov}
V. V. Petrov, Sums of Independent Random Variables, Springer, 1975.

\bibitem{sarkozy1}
A. S\'ark\"ozy, \emph{On divisibility properties of sequences of integers}, 241--250, in The Mathematics of Paul \Erdos{} I, R. L. Graham, J. Nesetril (eds.), Springer-Verlag Berlin Heidelberg (1997). 

\bibitem{sarkozy2}
A. S\'ark\"ozy, \emph{On divisibility properties of sequences of integers}, 221--232, in The Mathematics of Paul Erd\H{o}s I (2nd edition) R. L. Graham, J. Nesetril (eds.), Springer-Verlag Berlin Heidelberg (2013).

\bibitem{sperner}
E. Sperner, \emph{Ein Satz \"uber Untermengen einer endlichen Menge}, Mathematische Zeitschrift \textbf{27} (1928), 544--548.

\bibitem{stanley}
R. Stanley, \emph{Two poset polytopes},
Discrete Comput. Geom. 1 (1986), no. 1, 9--23.

\bibitem{tenenbaum}
G. Tenenbaum,
\emph{Introduction to Analytic and Probabilistic Number Theory},
Graduate Studies in Mathematics, vol. 163,
American Mathematical Society, Providence, RI, 2015.

\bibitem{vdl}
J. van de Lune,
\emph{Some inequalities involving Riemann's zeta-function},
CWI Report ZW 50/75, Centrum Wiskunde \& Informatica, Amsterdam, 1975.

\bibitem{various}
Various,
\emph{Some of Paul's favorite problems},
booklet produced for the conference ``Paul \Erdos{} and his mathematics,''
Budapest, July 1999.

\bibitem{wang}
K. C. Wang,
\emph{The logarithmic concavity of $(1-2^{1-r})\zeta(r)$},
J. Changsha Comm. Univ. \textbf{14} (1998), 1--5. In Chinese.

\bibitem{yamamoto}
K. Yamamoto, \emph{Logarithmic order of free distributive lattice}, Journal of the Mathematical Society of Japan \textbf{6} (1954), 343--353.

\bibitem{zhang1} Z. Zhang, \textit{On a conjecture of Erd\H os on the sum $\sum_{p\le n} 1/(p \log p)$}, J. Number Theory {\bf 39} (1991), 14--17.

\bibitem{zhang2} Z. Zhang, \textit{On a problem of Erd\H os concerning primitive sequences}, Math. Comp.\ {\bf 60} (1993), 827--834.


\end{thebibliography}

\end{document}